\documentclass[a4paper, 12pt,oneside,reqno]{amsart}
\usepackage[a4paper]{geometry}
\usepackage[english]{babel}
\usepackage[T1]{fontenc}
\usepackage{amsfonts}
\usepackage{mathrsfs}
\usepackage{tikz}
\usetikzlibrary{arrows,shapes,snakes,automata,backgrounds,petri,through,positioning}
\usetikzlibrary{intersections}

\usepackage[matrix,arrow]{xy}
\usepackage{amssymb,amscd,amsthm,amsmath}
\usepackage[a4paper]{geometry}
\usepackage{amsmath}
\usepackage{amssymb}
\usepackage{amsthm}
\usepackage[colorlinks=true, allcolors=blue]{hyperref}

\newtheorem{theorem}{Theorem}[section]
\newtheorem{lemma}[theorem]{Lemma}

\newtheorem{proposition}[theorem]{Proposition}
\newtheorem{corollary}[theorem]{Corollary}
\theoremstyle{definition}
\newtheorem{definition}[theorem]{Definition}
\newtheorem{example}[theorem]{Example}
\newtheorem{remark}[theorem]{Remark}

\newcommand{\CC}{\mathbb{C}}
\newcommand{\ZZ}{\mathbb{Z}}
\newcommand{\QQ}{\mathbb{Q}}
\newcommand{\TT}{\mathbb{T}}
\newcommand{\HH}{\mathbb{H}}
\newcommand{\KK}{\mathbb{K}}
\newcommand{\Aut}{\mathrm{Aut}}
\newcommand{\Ker}{\mathrm{Ker}}
\newcommand{\Lie}{\mathrm{Lie}}

\renewcommand{\phi}{\varphi}

\title{Isotropy subgroups of homogeneous locally nilpotent derivations}
\author{Dmitriy Chunaev}
\address{Lomonosov Moscow State University
	National Research University Higher School of Economics, Moscow}
\email{dchunaev@hse.ru}
\author{Polina Evdokimova}
\address{Lomonosov Moscow State University
	National Research University Higher School of Economics, Moscow}
\email{polina.evdokimova@math.msu.ru}
\thanks{This work is an output of a research project implemented as part of the Basic Research Program at HSE University.} 
\subjclass[2020]{Primary 14R20, 13N15; \ Secondary 13A50, 14L30}
\keywords{Automorphism group, locally nilpotent derivation, ind-group, toric variety, trinomial variety.}
\begin{document}

\begin{abstract}
We say that a locally nilpotent derivations $\delta$ is maximal if there are no inequivalent locally nilpotent derivations that commute with $\delta$. The paper gives a description of isotropy groups of maximal homogeneous locally nilpotent derivations on affine toric varieties and on certain trinomial hypersurfaces. Moreover, the criteria for homogeneous locally nilpotent derivations to be maximal were obtained for these classes of varieties.
\end{abstract}
\maketitle
\section{Introduction}

Let $\KK$ be an algebraically closed field of characteristic zero. Consider an irreducible affine algebraic variety $X$ with an algebra of regular functions $B := \mathbb{K}[X]$. Let $\mathrm{LND}(B)$ denote the set of all {\it locally nilpotent derivations} (LND) of algebra $B$, namely the set of derivations $\delta\colon B\rightarrow B$ such that for any $f\in B$ there is a non-negative integer~$n$, satisfying the condition $\delta^n(f)=0$.

There is a correspondence between LNDs and their {\it exponents}, where the exponent of an LND~$\delta$ is defined by the series $\exp(\delta) := \sum_{i \geqslant 0} \frac{\delta^i}{i!}$. Applying $\exp(\delta)$ to any element $b\in B$, one obtains a finite sum since $\delta$ is locally nilpotent. Hence, an exponent $\exp(\delta)$ is correctly defined. It is easy to show that $\exp(\delta)$ is an automorphism of the algebra $B$. Consider algebraic subgroups of the group $\mathrm{Aut}(B)$ of automorphisms of the algebra $B$, isomorphic to the additive group of the ground field $\KK$. We shall call such subgroups $\mathbb{G}_a$-subgroups. Each LND $\delta$ corresponds to the following subgroup $\mathcal{H}_\delta=\{ \exp(t\delta), t \in\KK \}$. This correspondence sets a bijection between LNDs, that are considered up to be proportionality with a scalar factor, and $\mathbb{G}_a$-subgroups in $\Aut(B)$. 

There is a natural action of the group $\Aut(X)$ on $\mathrm{LND}(B)$ by conjugation. Denote the stabilizer of an LND $\delta$ under this action by $\Aut(B)_\delta$. We shall call this stabilizer the isotropy subgroup of the LND. The isotropy subgroups were studied in several papers. For example, it was proved in~\cite{MP} that an isotropy subgroup of a simple derivation on a polynomial algebra in two variables is trivial. Stabilizers of Shamsuddin derivations were also studied in this paper. The isotropy subgroups of LNDs on some Danielewski surfaces and on some almost rigid varieties were described in~\cite{BV} and in~\cite{DL} respectively. The paper~\cite{DGL} concentrates on studying isotropy subgroups of LNDs on polynomial algebra in three variables.

There is an ind-group structure on a group of automorphisms $\Aut(B)$. The concept of ind-group traces back to I. R. Shafarevich's works, who named this object infinite-dimensional group, see~\cite{S} and~\cite{S1}. A survey of the results concerning the ind-groups can be found in~\cite{FK}. An isotropy subgroup of an LND is a natural closed in Zariski topology subgroup of a group $\Aut(B)$. Hence, ind-group theory can be used to study $\Aut(B)_\delta$.

Consider automorphisms of an algebra $B$, contained in $\mathbb{G}_a$-subgroups. We say that such automorphisms are {\it unipotent}. Two LNDs are called {\it equivalent} if their kernels coincide. It is known that equivalent LNDs commute. We shall call an LND $\delta$ {\it maximal} if every LND of an algebra~$B$, commuting with $\delta$, is equivalent to~$\delta$. For any LND consider a subgroup $\mathcal{U}(\delta)$ in $\mathrm{Aut}(B)$, consisting of exponents of all LNDs, that are equivalent to $\delta$. It follows from~\cite[Proposition 4.1]{RS} that LND $\delta$ is maximal if and only if a group $\mathcal{U}(\delta)$ is unique maximal commutative unipotent subgroup containing~$\exp(\delta)$.

If an automorphism of an algebra $B$ is contained in $\Aut(B)_\delta$, then it can be restricted to a kernel $\Ker(\delta)$ of this derivation. This restriction gives a natural homomorphism $\Theta\colon \Aut(B)_\delta\rightarrow \mathrm{Aut}(\mathrm{Ker}(\delta))$. It was proved in~\cite{DGL} that $\Ker(\Theta)$ coincides with $\mathcal{U}(\delta)$.

This paper presents a technique to describe stabilizers of homogeneous maximal LNDs on varieties with a torus action. The method is based on studying $\mathrm{Im}(\Theta)$. Consider subgroups of the group $\mathrm{Aut}(B)$, which are closed in ind-topology and satisfy some conditions. We prove that all maximal tori in such subgroups are conjugate to each other with respect to unipotent element, see Theorem~\ref{ind}. This Theorem can be applied to $\Aut(B)_\delta$ and a maximal LND $\delta$. Therefore, we obtain the following fact: automorphisms, contained in $\Aut(B)_\delta$, send elements of $\Ker(\delta)$ that are homogeneous with respect to a maximal torus to homogeneous elements, see Proposition~\ref{stab}.

This technique allows describing a group $\Aut(B)_\delta$ for homogeneous maximal LNDs on affine toric varieties, see Theorem~\ref{toric}. Theorems~\ref{trinomial} and~\ref{trinomial1} give a description of this group in the case where the varieties are trinomial hypersurfaces that are given by the equations of a special form.

The conditions for homogeneous LNDs on these varieties to be maximal are also studied. In the toric case a criterion in combinatorial terms is obtained, see Proposition~\ref{kriteriylnd}. Several types of trinomial hypersurfaces such that every homogeneous LND on these hypersurfaces is maximal, are found in Section~\ref{razdel9}. Moreover, maximal LNDs on various trinomial hypersurfaces are described.

The authors are grateful to S. Gaifullin for statement of the problem and continuous assistance during the work on the paper, to A. Perepechko for useful discussions and to I. Arzhantsev for valuable comments.

\section{Locally nilpotent derivations}

Some preliminaries on the field of locally nilpotent derivations are gathered in this section. Detailed information on this area can be found, for example, in~\cite{F}.

Let $B = \KK[X]$ be an algebra of regular functions on an irreducible affine variety $X$.

\begin{definition}
	A {\it derivation} of the algebra $B$ is a linear operator $\delta\colon B\rightarrow B$, satisfying the Leibniz rule:
	$\delta(fg)=f\delta(g)+g\delta(f)$.
\end{definition}
\begin{definition}
	A derivation $\delta\colon B\rightarrow B$ is called {\it locally nilpotent} (LND) if for every $f\in B$ there is a non-negative integer $n$ such that $\delta^n(f)=0$.
\end{definition}

Consider a grading on the algebra $B$ by an abelian group $G$
$$
B=\bigoplus_{g\in G}B_g.
$$
A derivation $\delta$ is called homogeneous if it sends homogeneous elements to homogeneous ones. It follows from the Leibniz rule that for any nonzero homogeneous derivation $\delta$ there is an element $g_0\in G$ such that $\delta$ sends $B_g$ to $B_{g+g_0}$. An element $g_0$ is called the {\it degree} of the derivation $\delta$. It is easy to prove that any derivation can be decomposed into a finite sum of homogeneous ones. We call these homogeneous summands {\it homogeneous components} of the derivation.

Consider a grading, given by a group $\mathbb{Z}$, and LND $\delta$ of $B$. Then $\delta=\sum\limits_{i=l}^k\delta_i$. It was proved in~\cite{R} that the components $\delta_l$ and $\delta_k$ with the minimal and the maximal degrees are also locally nilpotent. Now, suppose there is a $\mathbb{Z}^n$-grading and $\delta$ is decomposed into a sum of homogeneous derivations with respect to the grading. Then the convex hull of the degrees of homogeneous components forms a polyhedron. In this case it turns out that the components that correspond to the vertices of the polyhedron are locally nilpotent derivations.

Gradings on $B=\KK[X]$ by a free abelian group $\mathbb{Z}^n$ are in natural bijection with algebraic action of a torus $\TT=(\mathbb{K}^\times)^n$ on $X$. A lattice $\mathbb{Z}^n$ is identified with the character group $\mathfrak{X}(\TT)$ of the torus~$\TT$. An LND $\delta$ is $\mathbb{Z}^n$-homogeneous if and only if the torus $\TT$ is contained in the normalizer of a corresponding $\mathbb{G}_a$-subgroup of $\mathcal{H}_\delta$. We will frequently use the term $\TT$-homogeneous derivation instead of the term $\mathfrak{X}(\TT)$-homogeneous derivation.

\begin{definition}
	An LND $\delta$ is called {\it irreducible} if the set $\delta (B)$ is not contained in any proper principal ideal of $B$.
\end{definition}

\begin{definition}
	An element $r \in B$ is called {\it local slice} of an LND $\delta$ if $\delta(r) \neq 0$, but $\delta^2(r) = 0$.
\end{definition}

Denote the transcendence degree of a finitely generated $\KK$-algebra $R$ over $\KK$ by $\mathrm{tr.deg.}R$. Also denote localization of $R$ at an element $r$ by $R_r$ for any $r \in R$.

Fix an arbitrary LND $\delta$ and introduce the notation $A := \Ker(\delta)$. Then $A$ is a factorially closed subalgebra of $B$. Moreover, the following proposition is true:
\begin{proposition}\label{slice} \cite[Principle 11 (d),(e)]{F}
	Let $r \in B$ be an arbitrary local slice then $B_{\delta(r)} = \nonumber \\ = A_{\delta(r)} [r]$. In particular, $\mathrm{tr.deg.}A = \mathrm{tr.deg.}B - 1$.
\end{proposition}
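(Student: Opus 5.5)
Let $r$ be a local slice and put $s:=\delta(r)$. Since $\delta^2(r)=0$, we have $s\in A$ and $s\neq 0$. Because $s$ lies in the kernel, the localized derivation $\delta_s$ on $B_s$ extends $\delta$. It is still locally nilpotent, since $\delta^n(b/s^k)=\delta^n(b)/s^k$. Its kernel is $A_s$: if $\delta(b/s^k)=0$ then $\delta(b)=0$. Put $t:=r/s\in B_s$. Then $\delta(t)=1$, so $t$ is a slice of $\delta_s$.

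The main step is the slice theorem for an LND with a slice, which I will prove directly. Define the Dixmier map $\pi\colon B_s\to B_s$ by
$$
\pi(f)=\sum_{i\ge 0}\frac{(-1)^i}{i!}\,\delta^i(f)\,t^i .
$$
The sum is finite by local nilpotency. Using $\delta(t)=1$ and the Leibniz rule, $\delta(\pi(f))$ telescopes to zero, so $\pi(f)\in A_s$. Moreover, $f=\sum_i \frac{1}{i!}\pi(\delta^i f)\,t^i$; this is checked by induction on the nilpotency order $\min\{n:\delta^n f=0\}$, or by the formal identity $\exp(t\,\cdot)\exp(-t\,\cdot)=1$. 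Hence $B_s=A_s[t]=A_s[r]$, where the last equality holds because $s$ is invertible in $B_s$.

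It remains to show that $t$ is transcendental over $A_s$. Suppose $\sum_{i=0}^m a_i t^i=0$ with $a_i\in A_s$, $m$ minimal and $a_m\neq0$. Applying $\delta$ gives $\sum_{i\ge1} i a_i t^{i-1}=0$. This relation has degree $m-1$ and leading coefficient $m a_m\neq 0$ (characteristic zero), contradicting minimality. So $B_s\cong A_s[t]$ is a polynomial ring in one variable over $A_s$.

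For the transcendence degree, note that $B$ is a domain. Localization does not change the fraction field, so $\mathrm{Frac}(B)=\mathrm{Frac}(A)(t)$ with $t$ transcendental over $\mathrm{Frac}(A)$. Therefore $\mathrm{tr.deg.}\,\mathrm{Frac}(A)=\mathrm{tr.deg.}B-1$. One subtlety is that $A$ need not a priori be finitely generated. Here I read $\mathrm{tr.deg.}A$ as the transcendence degree of its fraction field, which is at most that of $B$ and hence finite. Finally, the existence of a local slice when $\delta\ne0$ is clear: take any $f$ with $\delta f\neq0$ and replace it by $\delta^{n-2}f$, where $n$ is minimal with $\delta^n f=0$. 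So the "in particular" statement applies to every nonzero LND.
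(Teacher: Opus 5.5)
Your proof is correct and follows essentially the same route as the standard proof. The paper gives no proof of its own and only cites Freudenburg's Principle 11, which is established the same way: extend $\delta$ to $B_{\delta(r)}$, where $r/\delta(r)$ becomes a slice, apply the Dixmier map and slice theorem, show the slice is transcendental over the kernel, and read off $\mathrm{Frac}(B)=\mathrm{Frac}(A)(r)$. Reading $\mathrm{tr.deg.}A$ as the transcendence degree of $\mathrm{Frac}(A)$ is the right way to handle the fact that $A$ need not be finitely generated.
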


It is easy to see that for any LND $\delta$ a derivation $h\delta$, where $h \in \Ker(\delta)$, is also locally nilpotent. Such derivations we call {\it replicas} of the derivation $\delta$.

\begin{definition}
	LNDs $\delta$ and $\partial$ are called {\it equivalent} if $\Ker(\delta) = \Ker(\partial)$. We use the notation $\delta \sim \partial$ for any two equivalent LNDs.
\end{definition}

It follows from~\cite[Principle 12]{F} that the equality $\Ker(\delta) = \Ker(\partial)$ is equivalent to the equality $\partial = h\delta$ for some $h$ from the quotient field $\mathrm{Quot}(\mathrm{Ker}(\delta))$. In particular, equivalent LNDs commute. Hence, for an LND $\delta$ one can consider the set $\mathcal{U}(\delta) := \{\exp(\partial) \;|\; \partial \sim \delta \}$, which is an abelian subgroup of the group $\Aut(B)$.

The next lemma is a particular case of~\cite[Lemma 3.2]{DG}:

\begin{lemma}\label{sumdiff}
	Suppose $D$ and $\delta$ are LNDs of an algebra $B$, $\partial$ is an LND, $\partial \sim \delta$, $D$ and $\delta$ commute. Then $D + \partial$ is also an LND.
\end{lemma}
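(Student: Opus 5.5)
We must show that $D+\partial$ is locally nilpotent when $D$ commutes with $\delta$ and $\partial\sim\delta$. First we reduce to the case where $\partial$ is a replica of $\delta$. By~\cite[Principle 12]{F}, $\partial = h\delta$ with $h\in\mathrm{Quot}(A)$, where $A=\Ker(\delta)$. Choose a local slice $r$ of $\delta$ and put $c=\delta(r)\in A$. A standard argument shows that some power $c^m$ makes $c^m h\in A$: indeed $h\delta(r)=\partial(r)$, and $\partial(r)$ lies in $\Ker\partial=A$ because $\partial^2(r)=h^2\delta^2(r)+h\delta(h)\delta(r)=0$, using $\delta(h)=0$ for $h\in\mathrm{Quot}(A)$. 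So $h=a/c$ with $a\in A$.

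The main step is to prove the claim for a genuine replica $\partial=f\delta$ with $f\in A$. Two facts are used.

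\begin{itemize}
\item $D$ preserves $A$, since $\delta D(x)=D\delta(x)=0$ for $x\in A$. As $D$ is locally nilpotent, $D|_A$ is an LND of $A$.
\item The replica does not in general commute with $D$: $[D,f\delta]=D(f)\delta$.
\end{itemize}

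The natural route is filtration and degree counting. Since $B$ is finitely generated, pick generators $b_1,\dots,b_s$ and let $V$ be a finite-dimensional subspace containing them. We may take $V$ to be $D$-stable and $\delta$-stable, because $D$ and $\delta$ commute and both are locally nilpotent, so $\sum_{i,j}D^i\delta^j(\text{span } b_k)$ is finite-dimensional. Let $W\subset A$ be the finite-dimensional $D$-stable span of $f, D f, D^2 f,\dots$. Expanding $(D+f\delta)^n(v)$ for $v\in V$ and using the commutator $[D,w\delta]=D(w)\delta$, every term has the form $w_1\cdots w_k\, D^{i}\delta^{j}(v)$ with $w_\ell\in W$. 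It is convenient to normal-order, moving all multiplication operators left, since elements of $A$ commute with $\delta$ as operators up to the coefficient rule. Here $k=j$, and the total number of $D$'s is $n-j$, distributed between $D^i$ on $v$ and derivatives of the $w$'s.

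Let $N$ bound the nilpotency of $D$ on $W$ and on $V$, and of $\delta$ on $V$. A term is nonzero only if $j<N$ and $i<N$. Each of the $j$ coefficients absorbs fewer than $N$ applications of $D$. So a nonzero term needs $n-j\le i+jN<N+N^2$, and therefore $(D+f\delta)^n V=0$ for $n\ge N^2+2N$. Since $V$ generates $B$ and $D+f\delta$ is a derivation, it is locally nilpotent: the set of elements killed by some power of a derivation is a subalgebra.

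For general $h=a/c^m$, the derivation $\partial$ is an LND of $B$ with $\partial=h\delta$. The cleanest approach is to run the same counting with $\partial$ in place of $\delta$. The coefficients $D^k(h)$ lie in $\mathrm{Quot}(A)$, and the identity $[D,h\delta]=D(h)\delta$ still holds. However, the coefficient terms $D^k(h)\delta$ need not be derivations of $B$. Instead, the plan is to replace $\delta$ by $\partial$ from the start. Then $D$ commutes with $\partial$ only up to $D(h)\delta$, so this substitution does not fix the problem.

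Alternative for the general case: localize at $c$. In $B_c$, $D$ extends to $B_c$ but is no longer locally nilpotent, because $D(c)\neq0$ possibly. I expect this to be the main obstacle. What I would try first is to show $D(h)\in\mathrm{Quot}(A)$ makes $D(h)\delta$ an honest derivation of $B$ equivalent to $\delta$: it equals $[D,\partial]$, a derivation of $B$ killing $A$. Then iterate: $D^k(h)\delta=\mathrm{ad}_D^k(\partial)$ are derivations of $B$ vanishing on $A$. Each is proportional to $\delta$, hence maps $V$ into a common finite-dimensional space. These span a finite-dimensional space of derivations, since $\mathrm{ad}_D$ is locally nilpotent on $\mathrm{Der}(B)$ restricted to the needed generators, by the nilpotency of $D$ on $h$ through the bound on $D^k(\partial(r))$. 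The normal-ordering count then goes through verbatim.
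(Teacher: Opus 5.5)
Your replica case $\partial=f\delta$ with $f\in A$ is correct. With $V$ stable under $D$ and $\delta$ and $W=\mathrm{span}\{D^pf\}\subset A$, every term of $(D+f\delta)^n v$ has the form $D^{p_1}(f)\cdots D^{p_j}(f)\,D^i\delta^j(v)$, and your count kills all of them for large $n$.

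The lemma, however, concerns an arbitrary $\partial\sim\delta$, that is, $\partial=h\delta$ with $h\in\mathrm{Quot}(A)$ not necessarily in $A$ (e.g.\ $\delta=x\frac{\partial}{\partial y}$ and $\partial=\frac{\partial}{\partial y}$ on $\KK[x,y]$). In that case your argument has a gap at the one point that matters. The count needs $D^k(h)=0$ for $k\gg0$, and you never prove this. Your justification, ``the nilpotency of $D$ on $h$ through the bound on $D^k(\partial(r))$'', is circular in its first half. Its second half is insufficient: $h=\partial(r)/\delta(r)$, and $D$ is not locally nilpotent on $\mathrm{Quot}(A)$. Nilpotency of $D$ on a numerator and a denominator says nothing about the quotient; think of $1/x$ under $\frac{\partial}{\partial x}$. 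Likewise, ``each $D^k(h)\delta$ is proportional to $\delta$, hence maps $V$ into a common finite-dimensional space'' is not what the count uses. What it uses is that the sequence $D^k(h)$ terminates.

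The missing step is short and uses the commutation hypothesis once more. Since $[D,\delta]=0$, $D$ preserves $\mathrm{Quot}(A)$, and $\delta$ kills $\mathrm{Quot}(A)$, you get $[D,g\delta]=D(g)\delta$ for $g\in\mathrm{Quot}(A)$. Hence $\mathrm{ad}_D^k(\partial)=D^k(h)\,\delta$. On the other hand,
\[
\mathrm{ad}_D^k(\partial)(r)=\sum_{i=0}^{k}(-1)^i\binom{k}{i}D^{k-i}\bigl(\partial(D^i r)\bigr).
\]
Here $D^ir=0$ for $i\ge M$, and each of the finitely many elements $\partial(D^ir)$ with $i<M$ is killed by a power of $D$. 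So this sum is $0$ for $k\gg0$. Since $\delta(r)\neq0$, it follows that $D^k(h)=0$ for $k\gg0$.

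Now $W=\mathrm{span}\{D^ph\}\subset\mathrm{Quot}(A)$ is finite-dimensional, $D$-stable, $D$-nilpotent and killed by $\delta$. Your normal-ordering count then runs verbatim inside $\mathrm{Quot}(B)$, after extending $D$ and $\delta$ there, where they still commute. The individual terms $D^{p_1}(h)\cdots D^{p_j}(h)\,D^i\delta^j(v)$ need not lie in $B$, but this is irrelevant, since their sum is $(D+\partial)^n v$.

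So you can drop the worry about whether the $D^k(h)\delta$ are derivations of $B$, the detour of replacing $\delta$ by $\partial$, and the localization at $c$. With the step above supplied, the proof is complete.
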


Consider an LND with the following property:

\begin{definition}
	We say that an LND $\delta$ of an algebra $B$ is {\it maximal} if every LND of the algebra~$B$ that commutes with $\delta$ is equivalent to $\delta$.
\end{definition}

To state the equivalence of a homogeneous LND $\delta$ and all LNDs commuting with $\delta$, it suffices to check the equivalence of $\delta$ and all homogeneous LNDs commuting with $\delta$. The following proposition states this fact:

\begin{proposition}\label{odnor}
	Given a $\ZZ^n$-grading on the algebra $B$, consider a homogeneous LND $\delta$ such that every homogeneous LND commuting with $\delta$ is equivalent to it. Then $\delta$ is maximal.
\end{proposition}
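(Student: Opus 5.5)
Let $D$ be an arbitrary LND of $B$ commuting with $\delta$, and let $g_0$ be the degree of $\delta$. We must show $D\sim\delta$. Decompose $D=\sum_{g\in S} D_g$ into homogeneous components with respect to the $\ZZ^n$-grading. Since $\delta$ is homogeneous of degree $g_0$, the commutator $[\delta, D_g]$ is homogeneous of degree $g+g_0$. Hence $0=[\delta,D]=\sum_g[\delta,D_g]$ forces $[\delta,D_g]=0$ for every $g$, because the summands have pairwise distinct degrees. So every homogeneous component of $D$ commutes with $\delta$. The components $D_g$ themselves need not be locally nilpotent, so the hypothesis cannot be applied to them directly. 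The plan is an induction on the number $|S|$ of components, peeling off vertex components.

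Let $P$ be the convex hull of $S$. By the fact recalled above, every component $D_v$ with $v$ a vertex of $P$ is locally nilpotent. Such a $D_v$ is a homogeneous LND commuting with $\delta$, so by hypothesis $D_v\sim\delta$. In particular $D_v=h\delta$ with $h\in\mathrm{Quot}(\Ker\delta)$ by \cite[Principle 12]{F}, and $D_v$ commutes with $\delta$. Now apply Lemma~\ref{sumdiff} with the LND $D$ commuting with $\delta$ and the LND $\partial=-D_v\sim\delta$. It gives that $D':=D-D_v$ is again an LND. Moreover $D'$ commutes with $\delta$, and its components are exactly $D_g$ for $g\in S\setminus\{v\}$, so it has fewer components. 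By induction, $D'=0$ or $D'\sim\delta$.

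It remains to conclude that $D=D'+D_v\sim\delta$. If $D'=0$, this is immediate. Otherwise both $D'$ and $D_v$ equal elements of $\mathrm{Quot}(\Ker\delta)$ times $\delta$, so $D=(h'+h)\delta$ with $h'+h\in\mathrm{Quot}(\Ker\delta)$. Here $h'+h\neq 0$, since $D\neq0$. Then $\Ker D\supseteq\Ker\delta$. Both kernels have transcendence degree $\mathrm{tr.deg.}B-1$ by Proposition~\ref{slice}, and $\Ker\delta$ is factorially closed, hence algebraically closed in $B$. It follows that $\Ker D=\Ker\delta$.

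The main point to check carefully is that the converse of Principle 12 holds, namely that $h\delta$ being an LND with $h\in\mathrm{Quot}(\Ker\delta)$ implies equal kernels. This is exactly the algebraic closedness argument just given. One could avoid it altogether by using that $\sim$ is an equivalence relation and observing directly that $\Ker(D)\supseteq \Ker\delta$.
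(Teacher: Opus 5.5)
Your proof is correct and is essentially the paper's argument: you decompose $D$ into homogeneous components, observe that each commutes with $\delta$, and repeatedly subtract a vertex component (a homogeneous LND, hence equivalent to $\delta$) using Lemma~\ref{sumdiff}. Your explicit induction, your handling of the case $D'=0$, and your justification of the last step (namely $\Ker D\supseteq\Ker\delta$, equal transcendence degrees, and algebraic closedness of $\Ker\delta$ in $B$) make precise what the paper compresses into ``Hence, $D$ and $\delta$ are equivalent''; only your closing remark is loose, since $\Ker D\supseteq\Ker\delta$ alone still needs that transcendence-degree argument to give equality of kernels.
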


\begin{proof}
	Let $[\delta, D] = 0$ for an LND $D$. Decompose $D$ into the sum of homogeneous components: $D = \sum_i D_i$. Obtain $[\delta, D_i] = 0$ for every $i$, since $[\delta, D] = \sum_i [\delta, D_i]$ and all derivations $[\delta, D_i]$ have distinct degrees.
	
	If $D$ is homogeneous, then the assertion follows. Otherwise, consider a homogeneous LND $D_k$ in the decomposition of $D$. It is equivalent to $\delta$ under the hypothesis of the proposition. Then $D - D_k$ is an LND by Lemma~\ref{sumdiff}. In addition, it commutes with $\delta$.
	
	Continuing in a similar manner, we see that every homogeneous component of $D$ is a homogeneous LND commuting with $\delta$, so they are equivalent to $\delta$. Hence, $D$ and $\delta$ are equivalent.
\end{proof}

\section{Ind-groups}
We begin with the necessary information on ing-groups and their Lie algebras following the paper~\cite{FK}.

\begin{definition}
	An {\it ind-variety} is a union $\mathcal{V} = \bigcup_{i \in \mathbb{N}} \mathcal{V}_i$ of an ascending sequence of algebraic varieties $\mathcal{V}_1 \hookrightarrow \mathcal{V}_2 \hookrightarrow \ldots \,$~such that all inclusions $\mathcal{V}_i \hookrightarrow \mathcal{V}_{i+1}$ are closed in the Zariski topology. If all~$\mathcal{V}_i$ are affine, then $\mathcal{V}$ is an {\it affine ind-variety}.
\end{definition}
An ind-variety has a natural {\it Zariski topology}: $U \subseteq \mathcal{V}$ is closed in~$\mathcal{V}$ if and only if $U \cap \mathcal{V}_i$ is closed in $\mathcal{V}_i$ for every $i$. It follows that a closed subset $U$ of the ind-variety~$\mathcal{V}$ has a natural structure of an ind-variety given by a sequence $U \cap \mathcal{V}_i$.

A {\it morphism} between two ind-varieties $\mathcal{V} = \bigcup_{i} \mathcal{V}_i$ and $\mathcal{W} = \bigcup_{i} \mathcal{W}_i$ is a map $\phi\colon \mathcal{V}\rightarrow \mathcal{W}$ such that for any $k$ there is an $l$ such that $\phi(\mathcal{V}_k) \subseteq \mathcal{W}_l$ and the induced map $\mathcal{V}_k\rightarrow \mathcal{W}_l$ is a morphism of algebraic varieties. An {\it isomorphism} of ind-varieties is a bijective morphism such that the inverse map is also a morphism.

\begin{definition}
	The {\it algebra of regular functions} $\KK[\mathcal{V}]$ on an affine ind-variety $\mathcal{V}$ is the projective limit $\varprojlim_{i} \KK[\mathcal{V}_i]$.
\end{definition}

The product $\mathcal{V} \times \mathcal{W}$ has a natural structure of an ind-variety given by a sequence $\mathcal{V}_1 \times \mathcal{W}_1 \hookrightarrow \mathcal{V}_2 \times \mathcal{W}_2 \hookrightarrow \ldots$

\begin{definition}
	An {\it ind-group} $\mathcal{G}$ is an ind-variety with a group structure such that multiplication $\mathcal{G} \times \mathcal{G}\rightarrow \mathcal{G}$, $(g,h) \mapsto gh$, and the inverse $\mathcal{G} \rightarrow \mathcal{G}$, $g \mapsto g^{-1}$, are morphisms of ind-varieties.
\end{definition}

A closed subgroup of an ind-group is also an ind-group with respect to the natural structure of an ind-variety.

\begin{definition}
	An ind-group $\mathcal{G}$ is called {\it nested} if $\mathcal{G} = \bigcup_i \mathcal{G}_i$, where $\mathcal{G}_i$ is an algebraic group and $\mathcal{G}_i$ is closed in $\mathcal{G}_{i+1}$ for all $i$.
\end{definition}

The automorphism group of an affine variety $X$ has a natural structure of an ind-group, see~\cite[Section~5]{FK}.

For any ind-variety $\mathcal{V} = \bigcup_{i} \mathcal{V}_i$, we can define the {\it tangent space} in a point $v \in \mathcal{V}$: since $v \in \mathcal{V}_k$ for $k \geq k_0$ for some $k_0$ and $T_v \mathcal{V}_i \subseteq T_v \mathcal{V}_j$ for $j \geq i \geq k_0$, so
\begin{equation*}
	T_v\mathcal{V} := \bigcup_{k \geq k_0} T_v \mathcal{V}_i.
\end{equation*}
If $\mathcal{G}$ is an affine ind-group, we can naturally identify the tangent space of $\mathcal{G}$ at the identity $e \in \mathcal{G}$ with the set of all left-invariant (commuting with all homomorphisms $\lambda^{*}_g$ that are induced by left translations $\lambda_g\colon \mathcal{G} \rightarrow \mathcal{G}, h \mapsto gh$) derivations of the algebra $\KK[\mathcal{G}]$, see~\cite[Section~2]{FK}. This identification allows us to carry over a structure of Lie algebra to $T_e \mathcal{G}$. Thus, we come to the following definition:
\begin{definition}
	The {\it Lie algebra} $\Lie(\mathcal{G})$ of an affine ind-group $\mathcal{G}$ is the set of all left-invariant derivations of the algebra $\KK[\mathcal{G}]$ with the standard Lie bracket: $[\delta_1, \delta_2] := \delta_1 \delta_2 - \delta_2 \delta_1$.
\end{definition}

By an {\it action} of an ind-group $\mathcal{G}$ on an ind-variety $\mathcal{V}$ we mean a homomorphism $\rho\colon \mathcal{G} \rightarrow \Aut(\mathcal{V})$ such that the action map $\mathcal{G} \times \mathcal{V} \rightarrow \mathcal{V}$ is a morphism of ind-varieties.

Let $V$ be a vector space of at most countable dimension. Then $V$ is a union of ascending finite-dimensional subspaces, which endows $V$ with the structure of an ind-variety. Denote the Lie algebra of all endomorphisms of the vector space $V$ by $\mathcal{L}(V)$.

Consider a linear action of an ind-group $\mathcal{G}$ on $V$, that is, a homomorphism $\rho \colon \mathcal{G} \rightarrow \mathrm{GL}(V)$. For each $v \in V$, let $\mu_v \colon \mathcal{G} \rightarrow V$, $g \mapsto gv$, denote the orbit map.  Then we can define $d\rho \colon \Lie(\mathcal{G}) \rightarrow \mathcal{L}(V)$ as follows:
\begin{equation*}
	d\rho(\eta) (v) = d_e \mu_v(\eta),
\end{equation*}
where $\eta \in \Lie(\mathcal{G})$. According to~\cite[Lemma 2.6.2]{FK}, the map $d\rho$ is a homomorphism of Lie algebras; thus, we obtain an action of the Lie algebra $\Lie(\mathcal{G})$ on $V$. Henceforth, for a linear action of an ind-group on a vector space, we will call the action obtained by this construction the Lie algebra action.

Let $\mathrm{Vec}(X)$ denote the Lie algebra of vector fields on an affine variety $X$. As is known, this Lie algebra is identified with the Lie algebra $\mathrm{Der}(\KK[X])$ of derivations of $\KK[X]$. Suppose that an ind-group $\mathcal{G}$ acts on an affine variety $X$, and for any $x \in X$ let $\mu_x\colon \mathcal{G} \rightarrow X$, $g \mapsto gx$, be the orbit map. Then each element $\eta \in \Lie(\mathcal{G})$ defines a vector field $\xi_\eta \in \mathrm{Vec}(X)$ as follows:
\begin{equation*}
	\xi_\eta(x) := d_e\mu_x(\eta).
\end{equation*}
We can identify $\Lie(\Aut(X))$ with the Lie subalgebra of $\mathrm{Vec}(X)$ (or of $\mathrm{Der}(\KK[X])$) due to the following fact:
\begin{proposition}\cite[Proposition 7.2.4]{FK}. 
	The map $\xi \colon \Lie(\mathcal{G})\rightarrow \mathrm{Vec}(X)$ is an anti-homomorphism of Lie algebras. If $\mathcal{G} = \Aut(X)$, then $\xi$ is injective.
\end{proposition}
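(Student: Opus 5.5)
The statement has two parts. First, the map $\xi\colon \Lie(\mathcal{G})\to\mathrm{Vec}(X)$ is an anti-homomorphism of Lie algebras. Second, for $\mathcal{G}=\Aut(X)$ the map $\xi$ is injective. I would reduce the first part to the linear case, using the homomorphism $d\rho$ from \cite[Lemma 2.6.2]{FK}. The second part is a faithfulness argument for the orbit maps.

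\textbf{Step 1: pass to a linear action.} The action of $\mathcal{G}$ on $X$ induces a linear action $\rho$ on $V=\KK[X]$, a space of countable dimension, by $(g\cdot f)(x)=f(g^{-1}x)$. Every $f$ lies in a finite-dimensional subspace $V_k$, and the action map $\mathcal{G}\times X\to X$ is a morphism. Hence the orbit map $\mu_f\colon g\mapsto g\cdot f$ sends each filtration piece $\mathcal{G}_i$ into some $V_k$ by a morphism, so $\rho$ is a linear action of the ind-group in the required sense.

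\textbf{Step 2: compare the two vector fields.} Differentiating the identity $(g\cdot f)(x)=f(g^{-1}x)$ at $e$ in the direction $\eta$, and using $d_e(\mathrm{inv})=-\mathrm{id}$, gives
\[
d\rho(\eta)(f)=-\xi_\eta(f).
\]
By \cite[Lemma 2.6.2]{FK}, $d\rho$ is a Lie algebra homomorphism into $\mathcal{L}(V)$. Its image consists of derivations, because $g\cdot(f_1f_2)=(g\cdot f_1)(g\cdot f_2)$ and differentiating gives the Leibniz rule. Therefore $\xi=-d\rho$, and
\[
[\xi_{\eta_1},\xi_{\eta_2}]=[d\rho(\eta_1),d\rho(\eta_2)]=d\rho([\eta_1,\eta_2])=-\xi_{[\eta_1,\eta_2]}.
\]
So $\xi$ is an anti-homomorphism.

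\textbf{Step 3: injectivity for $\mathcal{G}=\Aut(X)$.} I use the construction of the ind-structure in \cite[Section 5]{FK}. Embed $X\subseteq\mathbb{A}^n$ with generators $x_1,\dots,x_n$. Then $\Aut(X)$ is a closed ind-subvariety of $\KK[X]^n\times\KK[X]^n$ via $g\mapsto(g^*x_1,\dots,g^*x_n,(g^{-1})^*x_1,\dots,(g^{-1})^*x_n)$. Consequently
\[
T_e\Aut(X)\hookrightarrow \KK[X]^n\times\KK[X]^n .
\]
The first component of $\eta$ is exactly $(\xi_\eta(x_1),\dots,\xi_\eta(x_n))$, up to the sign convention from Step 2. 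Suppose $\xi_\eta=0$. Then $\xi_\eta(x_i)=0$ for all $i$, so the first component of $\eta$ vanishes. The second component is its negative, since it is the differential of inversion. Hence $\eta=0$.

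\textbf{Main obstacle.} The delicate point is this last tangent-space identification. One must check that the tangent vector of the ind-group is determined by its images of the coordinate functions. This needs the concrete closed embedding of $\Aut(X)$ above, together with the fact that $T_e$ of a closed ind-subvariety injects into $T_e$ of the ambient ind-variety.
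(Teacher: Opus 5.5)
The paper gives no proof of its own here; it quotes the result from \cite[Proposition 7.2.4]{FK}. Your argument is correct and is the standard proof of that result: the identity $d\rho(\eta)=-\xi_\eta$ for the representation $f\mapsto f\circ g^{-1}$ on $\KK[X]$, combined with \cite[Lemma 2.6.2]{FK}, gives the anti-homomorphism, and injectivity for $\Aut(X)$ follows from the closed embedding $g\mapsto(g,g^{-1})$ together with $d_e(\mathrm{inv})=-\mathrm{id}$. The one step worth writing out in full is Step 1: there the morphism property of $\mathcal{G}_i\times V_k\to V_l$ comes from expanding $f(g^{-1}x)=\sum_s a_s(g)\,b_s(x)$ in $\KK[\mathcal{G}_i]\otimes\KK[X]$.
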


\begin{definition}
	A derivation $\delta \in \mathrm{Der}(\KK[X])$ is called {\it semisimple} if there exists a basis $\{ f_i \; | \; i \in \mathbb{N} \}$ of the vector space $\KK[X]$ such that $\delta(f_i) \in \KK f_i \;\; \forall i \in \mathbb{N}$.
\end{definition}
\begin{definition}
	A derivation $\delta \in \mathrm{Der}(\KK[X])$ is called {\it locally finite} if it acts locally finitely on $\KK[X]$, i.e., for any~$f \in \KK[X]$ there is a finite-dimensional $\delta$-invariant vector subspace $V\subset\KK[X]$ such that $f \in V$.
\end{definition}

It is easy to see that both semisimple and locally nilpotent derivations are locally finite. Moreover, any locally finite derivation $\delta$ admits the Jordan decomposition $\delta = \delta_s + \delta_n$, where $\delta_s$ is a semisimple derivation and $\delta_n$ is an LND. Here, $\delta_s$ and $\delta_n$ commute, and any derivation that commutes with $\delta$ also commutes with both $\delta_s$ and $\delta_n$, see~\cite[Section 2]{FZ}.

The following statement appears in various works, for instance, in~~\cite[Lemma 3.1]{FZ}, \cite[Lemma~4.1]{PR} or~\cite[Theorem 2.1]{AG}, and generalizes the statement for LNDs presented in the previous section:
\begin{lemma}\label{razl}
	Let $\delta = \sum_i \delta_i$  be the decomposition of a locally finite derivation into its homogeneous components. If $\delta \neq \delta_0$, then there exists a locally nilpotent component $\delta_k$ with $k \neq 0$.
\end{lemma}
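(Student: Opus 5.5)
The plan is to reduce to a one-dimensional grading by a generic linear functional and then use local finiteness to bound degrees. We work with a grading of $B=\KK[X]$ by $\ZZ^n$ (the $\ZZ$-case is the special case $n=1$). Write $\delta=\sum_{d\in S}\delta_d$, where $S\subset\ZZ^n$ is the finite set of degrees of the nonzero homogeneous components.

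\emph{Step 1 (choosing the component).} Let $P$ be the convex hull of $S\cup\{0\}$ in $\mathbb{R}^n$. Since $\delta\neq\delta_0$, the set $S$ contains a nonzero element, so $P\neq\{0\}$ and $P$ has a vertex $v\neq 0$. Every vertex of $P$ lies in $S\cup\{0\}$, so $v\in S$. We choose a linear functional $\phi\colon\ZZ^n\to\ZZ$ attaining its maximum on $P$ uniquely at $v$. Such a $\phi$ exists with integer coefficients because the cone of functionals that are maximized uniquely at a given vertex is open. Since $0\in P$ and $0\neq v$, we get $\phi(v)>\phi(0)=0$, and $\phi(d)<\phi(v)$ for every $d\in S\setminus\{v\}$. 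The claim is that $\delta_v$ is locally nilpotent; since $v\neq 0$, this proves the lemma.

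\emph{Step 2 (top component of powers).} Let $f\in B_u$ be homogeneous. Expanding $\delta^m(f)$, every term has the form $\delta_{d_1}\cdots\delta_{d_m}(f)$ and has degree $u+d_1+\dots+d_m$. Its $\phi$-degree is at most $\phi(u)+m\phi(v)$. Equality holds only if $\phi(d_i)=\phi(v)$ for all $i$, which by uniqueness of the maximum forces $d_i=v$ for all $i$. Hence the homogeneous component of $\delta^m(f)$ of degree $u+mv$ is exactly $\delta_v^m(f)$, and no other term of $\delta^m(f)$ has $\phi$-degree $\ge\phi(u)+m\phi(v)$.

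\emph{Step 3 (local finiteness bounds degrees).} Take a finite-dimensional $\delta$-invariant subspace $V\ni f$ with a basis $g_1,\dots,g_r$. Only finitely many degrees occur among the homogeneous components of the $g_j$. Hence there is a constant $C$ such that every element of $V$ has all its homogeneous components of $\phi$-degree at most $C$. All $\delta^m(f)$ lie in $V$, and $\phi(u)+m\phi(v)\to\infty$ because $\phi(v)>0$. So for large $m$ the degree-$(u+mv)$ component of $\delta^m(f)$ vanishes, that is, $\delta_v^m(f)=0$. An arbitrary $f\in B$ is a finite sum of homogeneous elements, so $\delta_v$ is locally nilpotent.

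The only delicate point is Step 1. One needs a nonzero degree $v$ that is simultaneously an extreme point of the degrees and is pushed strictly to infinity by the chosen functional. Adjoining $0$ to the set before taking the convex hull handles exactly this: it guarantees $\phi(v)>0$, which is what makes the degree bound in Step 3 contradict nonvanishing.
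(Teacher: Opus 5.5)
Your proof is correct and complete. Adjoining $0$ before taking the convex hull produces a nonzero vertex $v\in S$ with $\phi(v)>0$. Step 2 correctly identifies the degree-$(u+mv)$ component of $\delta^m(f)$ with $\delta_v^m(f)$. The finite-dimensional $\delta$-invariant subspace bounds the $\phi$-degrees of all $\delta^m(f)$, which forces $\delta_v^m(f)=0$ for large $m$.

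The paper gives no proof of this lemma; it only cites [FZ, Lemma 3.1], [PR, Lemma 4.1] and [AG, Theorem 2.1]. Your argument is the standard one for this fact: pick a functional isolating a vertex of the degree polytope, then use local finiteness to bound degrees. Adjoining $0$ to the degree set is a tidy way to avoid the usual separate treatment of top-degree and bottom-degree components.
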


Until the end of this section, $X$ is an affine algebraic variety, $\Aut(X)$ is its automorphism group with the structure of an ind-group, $\mathcal{G} \subseteq \Aut(X)$ is a closed ind-subgroup, and $\mathfrak{g} := \Lie(\mathcal{G})$ is its Lie algebra.

Following~\cite[Section~4]{PR}, we prove a generalization of~\cite[Proposition~4.4]{PR} for certain closed subgroups of $\Aut(X)$.

\begin{lemma}\label{ind-lem}
	Assume that all LNDs in $\mathfrak{g}$ are equivalent. Suppose that for any locally finite derivation $\delta \in \mathfrak{g}$, we have $\delta_s$, $\delta_n \in \mathfrak{g}$, where $\delta = \delta_s + \delta_n$ is the Jordan decomposition of $\delta$. If $\delta$ is a locally finite derivation in $\mathfrak{g}$ and $\partial$ is a locally nilpotent derivation in $\mathfrak{g}$, then~$\delta - \partial$ is locally finite.
\end{lemma}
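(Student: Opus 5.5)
The plan is to reduce everything to the structure of derivations that vanish on $A:=\Ker(\partial)$. We may assume $\partial\neq 0$, since otherwise there is nothing to prove. Write the Jordan decomposition $\delta=\delta_s+\delta_n$; by hypothesis $\delta_s,\delta_n\in\mathfrak g$. Since $\delta_n$ is an LND in $\mathfrak g$, either $\delta_n=0$ or $\delta_n\sim\partial$ by the assumption that all LNDs in $\mathfrak g$ are equivalent. In both cases the derivation $D:=\delta_n-\partial$ annihilates $A$, and
$$\delta-\partial=\delta_s+D.$$
Moreover $\delta_s$ commutes with $\delta_n$, so it preserves $\Ker(\delta_n)$. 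If $\delta_n\neq0$ this kernel is $A$. If $\delta_n=0$, I would instead use that $\partial$ and $\delta_s$ lie in $\mathfrak g$ and argue directly through the grading below (this is a point to double-check). In the main case, $A$ is $\delta_s$-invariant.

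Next I would use the semisimple derivation $\delta_s$ to grade $B$ by its eigenvalues, $B=\bigoplus_{\lambda\in\KK}B_\lambda$. Since $A$ is $\delta_s$-stable and $\delta_s$ is semisimple, $A$ is a graded subalgebra. Decompose $D=\sum_\lambda D_\lambda$ into homogeneous components. These satisfy $[\delta_s,D_\lambda]=\lambda D_\lambda$, and each $D_\lambda$ still annihilates $A$, because $A$ is graded.

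Choose a homogeneous local slice $r$ of $\partial$. This is possible because $\Ker(\partial^2)$ is graded (at least when $\partial$ is homogeneous; in general I would take $r\in\Ker\partial^2\setminus A$ and adjust). By Proposition~\ref{slice},
$$B\subseteq B_{\partial(r)}=A_{\partial(r)}[r].$$
Any derivation killing $A$ is therefore determined by its value on $r$, and equals $g\frac{d}{dr}$ on $A_{\partial(r)}[r]$. The key claim is that $D(r)\in A$, since $D=h\partial-\partial$ with $h\in\mathrm{Quot}(A)$. Because $A$ is graded, every homogeneous piece $D_\lambda(r)$ also lies in $\mathrm{Quot}(A)$. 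Hence each $D_\lambda=a_\lambda\frac{d}{dr}$ with $a_\lambda\in\mathrm{Quot}(A)$. Consequently all $D_\lambda$ commute pairwise, and each one lowers the degree in $r$.

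Finally, take $f\in B$ and a finite-dimensional $\delta_s$-invariant subspace $W\ni f$. Using the commutation rule
$$\delta_sD_\lambda=D_\lambda(\delta_s+\lambda),$$
every word in $\delta_s$ and the $D_\lambda$ applied to $f$ can be rewritten as a linear combination of elements
$$D_{\lambda_1}\cdots D_{\lambda_k}w,\qquad w\in W.$$
Since the $D_\lambda$ commute and lower the $r$-degree, which is bounded on $W$ by some $N$, only words with $k\le N$ contribute. There are finitely many $\lambda$, so these elements span a finite-dimensional space containing all $(\delta_s+D)^mf$. This proves that $\delta-\partial$ is locally finite.

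The main obstacle I expect is the bookkeeping in the second and third steps. One has to justify that $A$ is graded and that a homogeneous local slice exists, or else work with a nonhomogeneous $r$. One also has to verify that $D_\lambda(r)\in\mathrm{Quot}(A)$ componentwise, together with the degenerate case $\delta_n=0$. If these fail, the fallback is to pass to the torus generated by $\delta_s$ inside $\mathcal G$ and use that its conjugates of $\partial$ are LNDs in $\mathfrak g$, hence equivalent to $\partial$.
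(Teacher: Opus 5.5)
There is a genuine gap, and it lies in the case you set aside. When $\delta_n=0$ you give no argument that $A=\Ker(\partial)$ is $\delta_s$-stable. This case is not degenerate: it is exactly how the lemma is first used in Theorem~\ref{ind}, with $\delta=\Lambda$ semisimple. Stability of $A$ does not follow from $\delta$ and $\partial$ alone. For instance, on $\KK[x,y]$ the semisimple derivation $x\partial_x$ does not preserve $\KK[2x-y^2]$, the kernel of the LND $\partial_y+y\partial_x$. So the hypotheses on $\mathfrak g$ must enter, and the idea you never use is the heart of the paper's proof:
\begin{itemize}
\item Decompose $\partial=\sum_\lambda\partial_\lambda$ with $[\delta_s,\partial_\lambda]=\lambda\partial_\lambda$.
\item Since $\mathfrak g$ is closed under $\mathrm{ad}_{\delta_s}$, every $\mathrm{ad}_{\delta_s}^j\partial$ lies in $\mathfrak g$, and a Vandermonde argument puts each $\partial_\lambda$ in $\mathfrak g$.
\item Apply Lemma~\ref{razl} repeatedly: peel off an LND component, which is $\sim\partial$ by hypothesis and so commutes with $\partial$; the remainder is again an LND in $\mathfrak g$. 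This shows every nonzero $\partial_\lambda$ is an LND equivalent to $\partial$.
\end{itemize}
Only then is $A=\Ker(\partial_\lambda)$ graded, and $\Ker(\partial_\lambda^2)$ supplies a homogeneous local slice of $\partial$. Your fallback via ``the torus generated by $\delta_s$ inside $\mathcal G$'' is not justified as stated, since nothing places that torus in $\mathcal G$. What makes the conjugation idea work is again the Vandermonde step: $t\partial t^{-1}=\sum_\lambda\chi_\lambda(t)\partial_\lambda$ lies in $\mathfrak g$, is an LND, and is therefore equivalent to $\partial$.

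The homogeneous slice is also a real issue, not bookkeeping. Your claim $D_\lambda(r)\in\mathrm{Quot}(A)$ uses that the $D_\lambda(r)$ are the homogeneous components of $D(r)\in A$, which requires $r$ to be homogeneous. A derivation killing $A$ has the form $c\,\frac{d}{dr}$ on $A_{g}[r]$, but $c$ may have positive $r$-degree. Your justification (``$\Ker\partial^2$ is graded when $\partial$ is homogeneous'') does not apply, because $\partial$ is typically not $\delta_s$-homogeneous.

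When $\delta_n\neq0$ this is easily repaired: take $r$ to be a local slice of $\delta_n$. Since $\delta_n$ commutes with $\delta_s$, $\Ker(\delta_n^2)$ is graded, and such an $r$ is also a local slice of $\partial$ and of $D$ (when $D\neq0$). With that fix your argument in the case $\delta_n\neq0$ is complete. Your last step, moving $\delta_s$ to the right via $[\delta_s,D_\lambda]=\lambda D_\lambda$ and bounding the $r$-degree on $W$, is a correct and more elementary alternative to the paper's ending. The paper instead conjugates $\delta$ by $\exp(\partial')$, with $\partial'=\sum_{i\neq0}\frac1i\partial_i$, to obtain $\delta-\partial+\partial_0$, and then invokes \cite[Lemma~2.4]{FZ}.

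For $\delta_n=0$, however, the proposal remains incomplete until the eigencomponent argument above is supplied.
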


\begin{proof}
	
	Since $\delta$ is a locally finite derivation, its semisimple component $\delta_s$ and locally nilpotent component $\delta_n$ from the Jordan decomposition both belong to $\mathfrak{g}$.
	
	Note that for any LND $\partial'$, we have [$\delta, \partial'$] = [$\delta_s, \partial'$] since [$\delta_n, \partial'$] = 0 by the assumption of the lemma.
	
	Consider the decomposition of $\partial$ into homogeneous components with respect to $\delta_s$:
	\begin{equation*}
		\partial = \sum_i \partial_i, \;\; \text{where} \;\; [\delta_s, \partial_i] = i \partial_i, \;\; i \in \ZZ.
	\end{equation*}
	Then we get:
	\begin{equation}
		\begin{split}
			\begin{aligned}\label{vander}
				\partial = \sum_i \partial_i \in \mathfrak{g}, \\
				[\delta_s,\partial] = \sum_i i \partial_i \in \mathfrak{g}, \\
				[\delta_s,[\delta_s,\partial]] = \sum_i i^2 \partial_i \in \mathfrak{g}, \\
				\ldots
			\end{aligned}
		\end{split}
	\end{equation}
	Thus, these commutators are expressed in terms of $\partial_i$ via the system of equations with a Vandermonde matrix. Since all the indices $i$ are distinct, we can solve the system~(\ref{vander}) for $\partial_i$, therefore, all $\partial_i \in \mathfrak{g}$.
	
	If $\partial = \partial_0$, then [$\delta, \partial$] = 0, and the difference between two commuting locally finite derivations is again locally finite.
	
	If $\partial \neq \partial_0$, then, by Lemma~\ref{razl}, there exists a homogeneous locally nilpotent component $\partial_\upsilon$ in its decomposition. Therefore, $[\partial,\partial_\upsilon]$ = 0, by the hypothesis, so~$\partial - \partial_\upsilon$ is again an LND. Continuing this process, we obtain the following: all nonzero homogeneous components in the decomposition of $\partial$ are LNDs. Hence, $\partial_i = h_i \partial$,  where all $h_i$ belong to the fraction field of the kernel of $\partial$.
	
	Let $\partial' = \sum_{i \neq 0} \frac{1}{i} \partial_i =  \sum_{i \neq 0} \frac{h_i}{i}\partial$. Then: 
	\begin{equation*}
		[\delta, \partial'] = \sum_{i \neq 0} h_i\partial = \partial - \partial_0.
	\end{equation*}
	The derivations $[\delta, \partial']$ and $\partial'$ are locally nilpotent and thus commute. By~\cite[Lemma 2.4]{FZ}, we have that $\delta - [\delta, \partial'] = \delta - \partial + \partial_0 = \mathrm{exp}(\partial') \circ \delta \circ \mathrm{exp}(-\partial')$ is locally finite. Since
	$\partial_0$ commutes with both $\delta$ and $\partial \, - \, \partial_0$, the difference between the locally finite derivations $\delta \, - \, \partial \, + \, \partial_0$ and $\partial_0$ is again a locally finite derivation.
\end{proof}

\begin{theorem}\label{ind}
	Let $X$ be an affine variety, $\Aut(X)$ be its automorphism group, and  $\mathcal{G} \subseteq \Aut(X)$ be a subgroup closed in the ind-topology. 
	Assume that for $\mathfrak{g} := \Lie(\mathcal{G})$ the following conditions hold:
	\begin{enumerate}
		\item[1)] all LNDs in $\mathfrak{g}$ are equivalent,
		\item[2)] for any LND $\partial \in \mathfrak{g}$, we have $\mathrm{exp}(\partial) \in \mathcal{G}$,
		\item[3)] for any locally finite derivation $\delta \in \mathfrak{g}$, we have $\delta_s$, $\delta_n \in \mathfrak{g}$, where $\delta = \delta_s + \delta_n$ is the Jordan decomposition of $\delta$.
	\end{enumerate}
	Then all maximal tori in $\mathcal{G}$ are conjugate by $\mathrm{exp}(\partial)$ for some LND $\partial \in \mathfrak{g}$.
\end{theorem}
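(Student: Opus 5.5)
The plan follows the argument of \cite[Proposition~4.4]{PR}, with Lemma~\ref{ind-lem} supplying the key technical input. Let $T_1, T_2 \subseteq \mathcal{G}$ be two maximal tori. We first reduce to Lie algebras. The Lie algebras $\mathfrak{t}_1 = \Lie(T_1)$ and $\mathfrak{t}_2 = \Lie(T_2)$ are commutative subalgebras of $\mathfrak{g}$ consisting of semisimple derivations. A torus is determined by its Lie algebra, since it is the closure of the group generated by the images of the one-parameter subgroups spanned by its cocharacters. It therefore suffices to find an LND $\partial \in \mathfrak{g}$ with $\exp(\partial)\,\mathfrak{t}_1\exp(-\partial) \subseteq$ the Lie algebra of some torus containing $T_2$. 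Maximality then forces equality. Condition 2) guarantees that $\exp(\partial) \in \mathcal{G}$, so the conjugate of $T_1$ is again a torus in $\mathcal{G}$.

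The main step is to show that, for a generic semisimple $\delta_1 \in \mathfrak{t}_1$ and a generic $\delta_2 \in \mathfrak{t}_2$, there is an LND $\partial \in \mathfrak{g}$ such that $\exp(\partial)\delta_1\exp(-\partial)$ commutes with $\delta_2$.
\begin{enumerate}
\item[(a)] Decompose $\delta_1$ into homogeneous components with respect to the grading defined by $T_2$ (equivalently by $\delta_2$): $\delta_1 = \sum_i \delta_{1,i}$.
\item[(b)] Use the Vandermonde argument from the proof of Lemma~\ref{ind-lem} to see that each component $\delta_{1,i}$ lies in $\mathfrak{g}$.
\item[(c)] If $\delta_1$ is not $T_2$-homogeneous of degree $0$, Lemma~\ref{razl} gives a locally nilpotent component of nonzero degree. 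By condition 1) all such components are equivalent to a fixed LND, so all nonzero-degree components are replicas $h_i\partial_0$ with $h_i$ in the fraction field of the kernel of $\partial_0$.
\item[(d)] Set $\partial = \sum_{i\neq 0} \frac{1}{i}\,\delta_{1,i}$, or the analogous expression with the roles of $\delta_1$ and $\delta_2$ arranged so that $[\delta_2,\partial] = \delta_1 - \delta_{1,0}$. This mirrors the construction of $\partial'$ in Lemma~\ref{ind-lem}.
\item[(e)] Conclude that $\exp(\pm\partial)$ conjugates $\delta_1$ into $\delta_{1,0}$, which commutes with $\delta_2$. Since the nonzero-degree parts are mutually commuting LNDs, the series $\exp(\mathrm{ad}\,\partial)$ truncates after the linear term.
\end{enumerate}

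Next we pass from a single pair of derivations to whole tori. Choose $\delta_1$ generic in $\mathfrak{t}_1$ and $\delta_2$ generic in $\mathfrak{t}_2$, so that their centralizers in $\mathfrak{g}$ equal the full centralizers of $\mathfrak{t}_1$ and $\mathfrak{t}_2$. After conjugation, $\delta_1' = \exp(\partial)\delta_1\exp(-\partial)$ commutes with $\delta_2$. The semisimple derivations $\delta_1'$ and $\delta_2$ commute, so together they generate a commutative algebra of semisimple derivations. Its closure in $\mathcal{G}$ is therefore a torus, i.e.\ a diagonalizable group; to see this, one takes the closure of the group generated by the corresponding one-parameter subgroups. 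This torus contains both $\exp(\partial)T_1\exp(-\partial)$ and $T_2$. By maximality all three coincide.

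The main obstacle is the step in which $\delta_1$ is conjugated into the centralizer of $\delta_2$. Two points need care there. First, the components $\delta_{1,i}$ of a semisimple $\delta_1$ must be shown to be locally nilpotent, not merely locally finite; here we need Lemma~\ref{ind-lem}, applied iteratively as in its proof, to subtract LND components and keep local finiteness. Second, the conjugation must be verified to be algebraic. For this we would argue as in the last paragraph of the proof of Lemma~\ref{ind-lem}, using \cite[Lemma 2.4]{FZ}. A secondary concern is the genericity argument, namely that a single generic element of $\mathfrak{t}_j$ determines $T_j$. We would handle it by choosing $\delta_j$ whose eigenvalue map on the character lattice is injective.
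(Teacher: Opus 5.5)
There is a genuine gap in steps (d)--(e). Your steps (a)--(c), including the iterative use of Lemma~\ref{ind-lem} to show that every nonzero-degree component is an LND, match the paper. The trouble is that the relation $[\delta_2,\partial]=\delta_1-\delta_{1,0}$ does not control conjugation of $\delta_1$.

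Since the $\delta_{1,i}$ with $i\neq 0$ are commuting equivalent LNDs, you get $\exp(\partial)\delta_1\exp(-\partial)=\delta_1+[\partial,\delta_{1,0}]+\tfrac12[\partial,[\partial,\delta_{1,0}]]+\dots$. Nothing in (d) determines $[\partial,\delta_{1,0}]$, so the truncation claim is also unsupported. Mirroring Lemma~\ref{ind-lem} literally conjugates the grading derivation instead, giving $\exp(\partial)\delta_2\exp(-\partial)=\delta_2-\delta_1+\delta_{1,0}$, which is not what you need.

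Here is a concrete failure inside the theorem's hypotheses. Take $X=\mathbb{A}^1$, $\mathcal{G}=\Aut(X)$, $\delta_2=x\partial_x$, and $\delta_1=2(x+\tfrac12)\partial_x=2x\partial_x+\partial_x$. Then $\delta_{1,0}=2x\partial_x$, $\delta_{1,-1}=\partial_x$, and your $\partial=-\partial_x$ does satisfy $[\delta_2,\partial]=\delta_1-\delta_{1,0}$. However, $\exp(\partial)\delta_1\exp(-\partial)=2x\partial_x-\partial_x$ and $\exp(-\partial)\delta_1\exp(\partial)=2x\partial_x+3\partial_x$, and neither commutes with $\delta_2$. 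The correct conjugator is $\exp(-\tfrac12\partial_x)$.

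The recipe is not even invariant under rescaling $\delta_1\mapsto c\delta_1$, which multiplies your $\partial$ by $c$. The right denominators are the eigenvalues of $\mathrm{ad}$ of the semisimple part of $\delta_{1,0}$, not those of $\mathrm{ad}\,\delta_2$. You also have no argument that this operator acts diagonally on the components with nonzero eigenvalues.

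The missing idea is the one the paper uses: maximality of the torus applied to the degree-zero part. Since $\delta_{1,0}$ has $T_2$-degree $0$, its semisimple part $s$ lies in $\mathfrak{g}$ by condition 3) and commutes with $T_2$. By maximality of $T_2$, $s\in\mathfrak{t}_2$. The nilpotent part of $\delta_{1,0}$ is an LND in $\mathfrak{g}$, hence equivalent to all the others. So $\delta_1\in\mathfrak{t}_2\oplus\mathfrak{u}$, where $\mathfrak{u}$ is a finite-dimensional commutative algebra of equivalent LNDs.

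From here you can conjugate correctly. Write $\delta_1=s+\nu$ with $\nu=\sum_\chi\nu_\chi$ split by $T_2$-weights, and set $u=\sum_{d\chi(s)\neq 0}\nu_\chi/d\chi(s)$. Then $\exp(u)\delta_1\exp(-u)=s+\sum_{d\chi(s)=0}\nu_\chi$. The last sum is an LND commuting with $s$, so semisimplicity of $\delta_1$ forces it to vanish.

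Alternatively, follow the paper, which swaps the roles of the tori. Note that $\TT_1\ltimes U$, with $U=\exp(\mathfrak{u})$, is an algebraic subgroup of $\mathcal{G}$ whose Lie algebra contains $\Lambda$, so by \cite[Remark~17.3.3]{FK} the one-dimensional subtorus $\TT_2'$ lies in it. Do this for each one-dimensional factor of $\TT_2$ and merge the commutative groups $U_i$; this gives $\TT_2\subseteq\TT_1\ltimes U$. Conjugacy of maximal tori in this algebraic group then finishes the proof. This route also removes the need for your genericity argument.
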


\begin{proof}
	
	Let $\mathbb{T}_1$ and $\mathbb{T}_2$ be two distinct maximal tori in $\mathcal{G}$. Let $\Lambda$ be a semisimple derivation corresponding to a one-dimensional subtorus $\mathbb{T'}_2$ of the torus $\mathbb{T}_2$. 
	
	Consider the $\mathbb{Z}^n$-grading defined by the torus $\mathbb{T}_1$ and decompose $\Lambda$ into homogeneous components with respect to this grading: $\Lambda = \sum_{i \in \ZZ^n} \Lambda_i$. Then, by Lemma~\ref{razl}, there exists a nonzero $l \in \ZZ^n$ such that $\Lambda_l$ is a nonzero LND. Hence, by Lemma~\ref{ind-lem}, \mbox{$\Lambda - \Lambda_l$} is a locally finite derivation. The decomposition of $\Lambda - \Lambda_l$ contains less nonzero components. Continuing to reduce the number of homogeneous components in the same manner, we eventually arrive at a locally finite derivation $\Lambda_0$ that commutes with $\TT_1$.
	
	Consider the set of LNDs $\Omega := \{ \Lambda_i, \; i \neq 0 \}$ $\cup$ $\{ (\Lambda_0)_n \}$, where $(\Lambda_0)_n$ is the locally nilpotent component of $\Lambda_0$ and let $\mathfrak{u}$ denote the linear span of $\Omega$. All LNDs from $\Omega$ belong to $\mathfrak{g}$ and hence commute, therefore, $\mathfrak{u}$ is a Lie algebra. Moreover, the set $\Omega$ is finite, so the group $U := \, =~\{ \exp(\partial) \; | \; \partial \in \mathfrak{u} \} \subseteq \mathcal{G}$ is algebraic.
	
	Now, consider the group generated by the torus $\mathbb{T}_1$ and the group $U$. Since $\mathbb{T}_1$ normalizes $U$, this group is algebraic and coincides with the group $\TT_1 \ltimes U \subseteq \mathcal{G}$.
	
	Since $\Lambda_0$ commutes with $\TT_1$, its semisimple component also commutes with $\TT_1$. Hence, by the maximality of $\TT_1$, this component belongs to $\mathfrak{t}_1 := \Lie(\TT_1)$. Thus, $\Lambda_0 \in \mathfrak{t}_1 \ltimes \mathfrak{u} = \Lie(\TT_1 \ltimes U)$. Consequently, $\Lambda$ belongs to $\mathfrak{t}_1 \ltimes \mathfrak{u}$ and by~\cite[Remark 17.3.3]{FK} the torus $\TT'_2$ is contained in the group $\TT_1 \ltimes U$. 
	
	Let $\mathbb{T}_2 = \mathbb{T}_{21} \times \dots \times \mathbb{T}_{2n}$, where each $\mathbb{T}_{2i}$ is a one-dimensional torus. For every torus $\TT_{2i}$ we carry out the above procedure. We obtain an inclusion $\TT_{2i} \subseteq \TT_1 \ltimes U_i$, where $U_i$ is a commutative unipotent algebraic group. Denote the subgroup generated by all $U_i$ by $U$. Then all $\TT_{2i}$ are contained in the group $\TT_1 \ltimes U$. Therefore, $\TT_2 \subseteq \TT_1 \ltimes U$. Since the group $\TT_1 \ltimes U$ is algebraic, its maximal tori $\TT_1$ and $\TT_2$ are conjugated by an element of the form $\exp({\delta})t$, where $t \in \TT_1$ and $\delta$ is an LND. Therefore, these tori are also conjugated by an element $\exp({\delta})$.
\end{proof}

\section{Isotropy subgroups}

There is a natural action of the automorphism group $\Aut(B)$ of the algebra~$B$ on the set of all LNDs of $B$ by conjugation. Denote the stabilizer (or isotropy subgroup) of an LND $\delta$ under this action by $\Aut(B)_\delta := \{ \phi \in \Aut(B) \,|\, \phi \delta = \delta \phi \}$. The group $\Aut(B)_\delta \subset \Aut(B)$ is closed in the ind-topology, since its intersection with every finite-dimensional variety, such that the union of these varieties constitutes $\Aut(X)$, is closed in the variety.

\begin{remark}\label{zamind}
	The action of the Lie algebra $\Lie(\Aut(B)_\delta)$ on $\langle \delta \rangle$ is trivial, since the action of the group $\Aut(B)_\delta$ by conjugation on the linear space $\langle \delta \rangle$ is trivial. Moreover, for any element $\eta  \in \Lie(\Aut(X))$ we have $\eta \cdot \delta = [\delta, \xi_\eta]$ by~\cite[Section 7.3]{FK}. It implies that all derivations belonging to~$\Lie(\Aut(B)_\delta)$ commute with~$\delta$.
\end{remark}

It follows from Remark~\ref{zamind} that all LNDs in $\Lie(\Aut(B)_\delta)$ commute with $\delta$, so their exponents also commute with $\delta$. Hence, condition~(2) of Theorem~\ref{ind} is satisfied for $\Aut(B)_\delta$. The properties of the Jordan decomposition ensure that condition~(3) is also satisfied for this group.

Now, let $\delta$ be a maximal LND. This implies that condition~(1) is satisfied. Therefore, we can apply Theorem~\ref{ind} to the group $\Aut(B)_\delta$ and obtain the following:

\begin{proposition}\label{stab}
	
	Let $\TT$ be a maximal torus in $\Aut(B)_\delta$. Assume that an LND $\delta$ is maximal. Then any $\phi \in \Aut(B)_\delta$ preserves the set of $\TT$-semi-invariants of the algebra $A := \Ker(\delta)$.
	
\end{proposition}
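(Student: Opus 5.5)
Fix $\phi \in \Aut(B)_\delta$. The plan is to conjugate the maximal torus $\TT$ by $\phi$, bring it back to $\TT$ by a unipotent element that acts trivially on $A$, and read off the action of $\phi$ on semi-invariants.

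First, $\phi\TT\phi^{-1}$ is again a torus in $\Aut(B)_\delta$. It is maximal, since conjugation by $\phi$ is an automorphism of the ind-group $\Aut(B)_\delta$. As explained before the statement, conditions (1)--(3) of Theorem~\ref{ind} hold for $\mathcal{G}=\Aut(B)_\delta$: condition (1) by maximality of $\delta$ together with Remark~\ref{zamind}, condition (2) by Remark~\ref{zamind}, and condition (3) by the properties of the Jordan decomposition. Theorem~\ref{ind} therefore yields an LND $\partial \in \Lie(\Aut(B)_\delta)$ with
$$\exp(\partial)\,\phi\TT\phi^{-1}\exp(-\partial)=\TT.$$
Set $\psi := \exp(\partial)\phi$. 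Then $\psi$ normalizes $\TT$.

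Second, $\exp(\partial)$ acts trivially on $A$. By Remark~\ref{zamind}, $\partial$ commutes with $\delta$, and since $\delta$ is maximal, $\partial \sim \delta$. Hence $\Ker(\partial)=A$, so $\partial|_A=0$ and $\exp(\partial)|_A=\mathrm{id}$. Because $\exp(\pm\partial)$ fixes every element of $A$, we get $\psi|_A=\phi|_A$.

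Third, take a $\TT$-semi-invariant $f\in A$ of weight $\chi$, so $t\cdot f=\chi(t)f$. Note that $\TT\subset\Aut(B)_\delta$ preserves $A$, and $\phi$ preserves $A$ as well. For $t\in\TT$, put $t' := \psi^{-1}t\psi\in\TT$. Then
$$t\cdot\psi(f)=\psi(t'\cdot f)=\chi(t')\psi(f).$$
The map $t\mapsto\chi(\psi^{-1}t\psi)$ is a character of $\TT$, so $\psi(f)=\phi(f)$ is a semi-invariant in $A$. Applying the same argument to $\phi^{-1}$ gives the reverse inclusion, so $\phi$ preserves the set of semi-invariants.

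The main obstacle is the first step, namely checking that Theorem~\ref{ind} really applies: $\phi\TT\phi^{-1}$ must be a maximal torus of $\mathcal{G}$, and the conjugating LND must lie in $\Lie(\Aut(B)_\delta)$, so that Remark~\ref{zamind} gives $[\partial,\delta]=0$. The conclusion of Theorem~\ref{ind} states that $\partial\in\mathfrak{g}$, so I will rely on it directly. The remaining care is bookkeeping: whether the automorphism acts on functions or on points, which only changes whether we obtain $\chi\circ\mathrm{Ad}_{\psi}$ or $\chi\circ\mathrm{Ad}_{\psi^{-1}}$.
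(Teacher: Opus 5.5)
Your proof is correct and follows essentially the same route as the paper. Theorem~\ref{ind} gives a conjugating $\exp(\partial)$ with $\partial\in\Lie(\Aut(B)_\delta)$; Remark~\ref{zamind} and maximality force $\partial\sim\delta$ (the paper's $\partial=h\delta$ with $h\in\mathrm{Quot}(A)$), so $\exp(\partial)$ fixes $A$ pointwise, and a conjugation computation finishes. The only difference is packaging: you form $\psi=\exp(\partial)\phi$, which normalizes $\TT$ and agrees with $\phi$ on $A$, whereas the paper first shows that $\TT$-semi-invariants of $A$ are semi-invariant for every maximal torus and then applies this to $\phi^{-1}\TT\phi$.
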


\begin{proof}
	
	Let $f \in A$ be a $\TT$-homogeneous element. All maximal tori in $\Aut(B)_\delta$ are conjugate by $\exp(h\delta)$ for some $h \in \mathrm{Quot}(A)$ by Theorem~\ref{ind}. Consequently:
	\begin{equation*}
		(\exp(h\delta) \circ t \circ \exp(-h\delta)) \cdot f = (\exp(h\delta) \circ t) \cdot f = \exp(h\delta) \cdot (\lambda f) = \lambda f,
	\end{equation*}
	where $t \in \TT$ and $\lambda \in \KK$. Thus, $f$ is semi-invariant under the action of all maximal tori in $\Aut(B)_\delta$. Then for any $t \in \TT$
	\begin{equation*}
		t \cdot \phi (f) = (\phi \circ \phi^{-1} \circ t \circ \phi) \cdot f = \phi (\lambda f) = \lambda \phi(f),
	\end{equation*}
	where the second equality follows from the fact that $\phi^{-1}t\phi$ is an element of the maximal torus $\phi^{-1}\TT \phi$.
	
\end{proof}

Clearly, any $\phi \in \Aut(B)_\delta$ preserves the subalgebra $A$. Hence, we can consider the natural restriction homomorphism:
\begin{equation*}
	\Theta \colon \Aut(B)_\delta \rightarrow  \Aut(A).
\end{equation*}

The following fact on the homomorphism $\Theta$ was proved in the paper~\cite{DGL}. However, we also give the proof of this fact from that paper here as well:
\begin{proposition}\label{Ker}
	$\Ker(\Theta) = \mathcal{U}(\delta)$.
\end{proposition}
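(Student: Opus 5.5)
We prove the two inclusions $\mathcal{U}(\delta)\subseteq\Ker(\Theta)$ and $\Ker(\Theta)\subseteq\mathcal{U}(\delta)$ separately.

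The inclusion $\mathcal{U}(\delta) \subseteq \Ker(\Theta)$ is the easy one. Let $\partial \sim \delta$. By~\cite[Principle 12]{F} we have $\partial = h\delta$ with $h \in \mathrm{Quot}(A)$, so $\partial$ commutes with $\delta$. Hence $\exp(\partial)$ commutes with $\delta$, that is, $\exp(\partial)\in \Aut(B)_\delta$. Moreover $\Ker(\partial) = A$, so $\exp(\partial)$ restricts to the identity on $A$. Thus $\exp(\partial) \in \Ker(\Theta)$.

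For the reverse inclusion, take $\phi \in \Aut(B)_\delta$ with $\phi|_A = \mathrm{id}$. If $\delta = 0$ the claim is trivial, so we fix a local slice $r$ of $\delta$ and put $a := \delta(r) \in A\setminus\{0\}$. Since $\phi$ commutes with $\delta$, we get $\delta(\phi(r)) = \phi(\delta(r)) = \phi(a) = a$. Therefore $\delta(\phi(r) - r) = 0$, so $\phi(r) = r + c$ for some $c \in A$.

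Now put $\partial := \frac{c}{a}\delta$. The key computation is that $\exp(\partial)$ fixes $A$ and sends $r \mapsto r + \frac{c}{a}\,a = r+c$. We must also check that $\partial$ really is a derivation of $B$, i.e. $\partial(B)\subseteq B$, so that $\exp(\partial)$ is an automorphism of $B$ lying in $\mathcal{U}(\delta)$.

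To see this, extend everything to the localization $B_a$. By Proposition~\ref{slice}, $B_a = A_a[r]$. Both $\phi$ and $\exp(\partial)$ extend to $B_a$: $\phi$ fixes $a$, and $\partial$ is an LND on $A_a[r]$ because it kills $A_a$ and sends $r$ into $A_a$. These two automorphisms agree on the generators $A_a$ and $r$, so they coincide on $B_a$, and in particular $\exp(\partial)|_B = \phi$ maps $B$ into $B$.

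It remains to see that $\partial(B) \subseteq B$. For $b\in B$, $\partial$ acts on $A_a[r]$ as $c\,a^{-1}\frac{d}{dr}$. The plan is to recover $\partial$ from the one-parameter family $\exp(t\partial)$: we have $\exp(t\partial)(b) = \sum_k t^k\partial^k(b)/k!$, which is a polynomial in $t$. For integer $t$, $\exp(t\partial) = \phi^t$ maps $B$ to $B$. By Vandermonde interpolation over finitely many integers $t$, each coefficient $\partial^k(b)/k!$ is a $\QQ$-linear combination of elements of $B$, hence lies in $B$. Taking $k=1$ gives $\partial(b)\in B$.

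So $\partial$ is an LND of $B$. Its kernel contains $A$, and equals $A$ because $A$ is algebraically closed in $B$ and $\mathrm{tr.deg.}$ drops by exactly one (Proposition~\ref{slice}). Hence $\partial\sim\delta$ and $\phi=\exp(\partial)\in\mathcal{U}(\delta)$. The main obstacle is this integrality step, $\partial(B)\subseteq B$, and the interpolation argument is the tool I would use to settle it.
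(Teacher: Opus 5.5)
Your proof is correct and follows the paper's route: localize at $a=\delta(r)$, use $B_a=A_a[r]$, show $\phi(r)=r+c$ with $c\in A$, and identify $\phi$ with the exponential of the derivation of $B_a$ that kills $A_a$ and sends $r\mapsto c$. You obtain $c\in A$ more directly, from $\delta(\phi(r)-r)=0$, rather than via the paper's $\widetilde\psi(f)=uf+v$. Your Vandermonde interpolation over $\exp(t\partial)=\phi^t$ (equivalently, $\partial=\log\phi$) is a valid justification of $\partial(B)\subseteq B$, a step the paper only asserts as ``easy to see''.
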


\begin{proof}
	The inclusion $\mathcal{U}(\delta) \subseteq \Ker(\Theta)$ is obvious, let us prove the reverse inclusion.
	
	Let $\psi \in \Ker(\Theta)$, that is, $\psi|_A = \mathrm{id}$. Fix a local slice $f \in B$ and denote $g := \delta(f)$. Then $\psi$ extends to an automorphism $\widetilde \psi$ of the algebra $B_g = A_g[f]$.
	
	Since $\widetilde \psi$ acts identically on $A_g$ and $f$ is transcendental over the integral domain $A_g$, we obtain $\widetilde \psi(f) = uf + v$, where $u, v \in A_g$.
	
	The derivation $\delta$ also extends to a derivation $\widetilde \delta$ of algebra $B_g$, and the following holds:
	\begin{equation*}
		g = \psi(g) = \psi(\delta(f)) = \delta(\psi(f)) = \widetilde \delta (uf + v) = u\delta(f) = ug.
	\end{equation*}
	
	So we obtain $u = 1$. Then $v = \psi(f) - f \in B$, and hence, $v \in A$. Define a derivation $\widetilde D$ of the algebra $B_g$ as follows: $\widetilde D|A_g = 0$ and $\widetilde D(f) = v$. It is easy to see that $\widetilde D$ can be restricted to a derivation $D$ of the algebra $B$ and $\psi = \mathrm{exp}(D)$. Since $A \subseteq \Ker(D)$ we conclude that $D \sim \delta$, which completes the proof.
\end{proof}

\section{Toric varieties}

Let us briefly recall basic facts about toric varieties. For more information on this field, see, for example,~\cite{CLS}, \cite{Ful}.

\begin{definition}
	An irreducible algebraic variety $X$ is called {\it toric} if there exists a regular action of a torus $\TT$ on $X$ with an open orbit.
\end{definition}

Note that in this definition X need not be normal. However, in this and the next section, we assume all toric varieties to be normal, whereas in Section~\ref{razdel7} we will consider toric varieties that are not necessarily normal.

In this and the next section, let $X$ be an affine normal toric variety with an effective action of an algebraic torus $\TT$ and $\dim X = n$. 

Let $N$ be the lattice of one-parameter subgroups of the torus $\TT$, $M := \mathrm{Hom}(N, \ZZ)$ be the dual lattice of characters and $\langle \cdot, \cdot\rangle \colon M \times N \rightarrow \ZZ$ be the natural pairing. This pairing can be extended to a pairing $\langle \cdot, \cdot\rangle \colon M_{\QQ} \times N_{\QQ} \rightarrow \QQ$ of the vector spaces $N_{\QQ} := N \otimes_\ZZ\QQ$ and $M_{\QQ} := M \otimes_\ZZ\QQ$.

Let $\chi^m$ denote the character of $\TT$ which corresponds to a lattice point  $m \in M$. The group algebra
\begin{equation*}
	\bigoplus_{m \in M} \KK \chi^m
\end{equation*}
can be identified with the algebra $\KK[\TT]$ of regular functions on the torus $\TT$. The algebra $\KK[X]$ is identified with a subalgebra of $\KK[\TT]$ as follows: one assigns a rational polyhedral cone $\sigma$ in the vector space $N_{\QQ}$ to the variety $X$. The cone $\sigma$ corresponds to the dual cone $$\sigma^\vee := \{ m \in M_\QQ \; | \; \langle m, v \rangle \geqslant 0 \;\; \forall v \in \sigma \}$$ in the vector space $M_{\QQ}$, and $\KK[X]$ coincides with a semigroup algebra of the semigroup $\sigma^\vee \cap M $:
\begin{equation*}
	\KK[X] := \bigoplus_{m \in \sigma^\vee \cap M} \KK \chi^m.
\end{equation*}

Since the action of the torus $\TT$ on $X$ is effective, the cone $\sigma^\vee$ is of full dimension or, equivalently, the cone $\sigma$ is pointed, i.e., contains no lines.

All $\mathbb{G}_a$-subgroups normalized by the torus $\TT$ in the automorphism group of a complete normal toric variety were described in~\cite{D}. It is known that $\TT$-normalizable $\mathbb{G}_a$-subgroups in $\Aut(X)$ correspond to $\TT$-homogeneous LNDs on $\KK[X]$ in the case of an affine normal toric variety $X$. The description of such LNDs was obtained in~\cite{L}. Let us briefly recall this description.

Let $\rho_1, \ldots, \rho_k$ be all the extremal rays of the cone $\sigma$, and $v_1, \ldots, v_k \in N$ be the primitive vectors on these rays.
\begin{definition}
	A {\it Demazure root} of cone $\sigma$ corresponding to $\rho_i$ is a vector $e \in M$ such that
	\begin{equation*}
		\langle e, v_i \rangle = -1, \;\;\;\; \langle e, v_j \rangle \geqslant 0 \;\;\; \forall j \neq i.
	\end{equation*}
\end{definition}
The Demazure root $e$ defines the $\TT$-homogeneous LND $\delta_e$ on $\KK[X]$ by the formula:
\begin{equation*}
	\delta_e(\chi^m) = \langle e, v_i \rangle \chi^{m+e}.
\end{equation*}

Moreover, there is a one-to-one correspondence between Demazure roots and all homogeneous LNDs up to proportionality with a scalar factor.

The kernel of a homogeneous LND $\delta_e$ is a finitely generated subalgebra of $\KK[X]$ and has the following form:
\begin{equation*}
	\Ker(\delta_e) = \bigoplus_{m \in \rho_i^\bot \cap M} \KK \chi^m,
\end{equation*}
where $\rho_i^\bot := \{ m \in M \; | \; \langle m, v_i \rangle = 0 \}$. In particular, two homogeneous LNDs $\delta_e$ and $\delta_{e'}$ are equivalent if and only if $e$ and $e'$ correspond to the same extremal ray of the cone $\sigma$.

\section{Commuting homogeneous LNDs on toric varieties}

In this section, we describe a certain condition for two extremal rays of the cone $\sigma$ in combinatorial terms. This condition guarantees that the corresponding commuting homogeneous LNDs exist. We also obtain a combinatorial criterion for a homogeneous LND to be maximal.

The following lemma describes in combinatorial terms the condition for the commutativity of two homogeneous LNDs on a toric variety. It is fairly well-known, but, for the sake of completeness, we present its proof:

\begin{lemma}\label{isvtorlem}
	Let the Demazure roots $e$ and $e'$ correspond to distinct extremal rays $\rho$ and $\rho'$. Then
	\begin{equation*}
		[\delta_e, \delta_{e'}] = 0 \iff \langle e, v' \rangle = 0 \;\;\text{and} \;\; \langle e', v \rangle = 0.
	\end{equation*}
\end{lemma}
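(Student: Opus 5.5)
We compute the commutator directly on the basis $\{\chi^m \mid m \in \sigma^\vee \cap M\}$ of $\KK[X]$, using the formula $\delta_e(\chi^m) = \langle m, v\rangle \chi^{m+e}$. Here $v$ (resp. $v'$) is the primitive vector on $\rho$ (resp. $\rho'$). Applying the two derivations in both orders gives
\begin{equation*}
[\delta_e,\delta_{e'}](\chi^m) = \Bigl(\langle m, v'\rangle\langle m+e', v\rangle - \langle m, v\rangle\langle m+e, v'\rangle\Bigr)\chi^{m+e+e'} .
\end{equation*}
Expanding, the terms $\langle m,v'\rangle\langle m,v\rangle$ cancel, so
\begin{equation*}
[\delta_e,\delta_{e'}](\chi^m) = \Bigl(\langle m, v'\rangle\langle e', v\rangle - \langle m, v\rangle\langle e, v'\rangle\Bigr)\chi^{m+e+e'} .
\end{equation*}
This expression is valid even when $m+e+e'\notin\sigma^\vee$, since the coefficient then vanishes automatically because the operators are well defined on $\KK[X]$.

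The implication ($\Leftarrow$) is immediate: if $\langle e, v'\rangle = \langle e', v\rangle = 0$, the coefficient is zero for every $m$, hence the commutator is zero.

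For ($\Rightarrow$), we use that $\chi^{m+e+e'}\neq 0$ in $\KK[\TT]$, so commutativity means the linear form
\begin{equation*}
m \mapsto \langle m, \langle e',v\rangle v' - \langle e,v'\rangle v\rangle
\end{equation*}
vanishes on $\sigma^\vee\cap M$. Since $\sigma^\vee$ is full-dimensional, $\sigma^\vee\cap M$ spans $M_\QQ$, so the vector $\langle e',v\rangle v' - \langle e,v'\rangle v$ must be zero in $N_\QQ$. Because $\rho\neq\rho'$ are distinct extremal rays of a pointed cone, $v$ and $v'$ are linearly independent. Therefore both coefficients vanish, giving $\langle e',v\rangle = \langle e,v'\rangle = 0$.

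The only points requiring care are the justification that the monomials $\chi^m$ with $m\in\sigma^\vee\cap M$ span $M_\QQ$, which follows from full-dimensionality of $\sigma^\vee$ (effectivity of the action), and the linear independence of $v,v'$. Neither is a serious obstacle.
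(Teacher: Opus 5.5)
Your proof is correct and matches the paper's argument: you compute both composites on $\chi^m$, reduce commutativity to the vanishing of the linear form $m\mapsto\langle m,\langle e',v\rangle v'-\langle e,v'\rangle v\rangle$ on $\sigma^\vee\cap M$, extend it to $M_\QQ$ using full-dimensionality of $\sigma^\vee$, and conclude from the linear independence of $v$ and $v'$. Your side remark about the case $m+e+e'\notin\sigma^\vee$ is harmless but unnecessary, since the whole computation can be carried out inside $\KK[\TT]$.
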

\begin{proof}
	For any homogeneous function $\chi^m$, we have:
	\begin{equation*}
		\delta_{e'}(\delta_e(\chi^m)) = \delta_{e'}(\langle m, v \rangle \chi^{m+e}) = \langle m+e, v' \rangle \langle m, v \rangle \chi^{m+e+e'}.
	\end{equation*}
	
	Similarly, $\delta_{e}(\delta_{e'}(\chi^m))  = \langle m+e', v \rangle \langle m, v' \rangle \chi^{m+e+e'}$.
	
	It follows that the commutator equals zero if and only if the equality $\langle e, v' \rangle \langle m, v \rangle = \langle e', v \rangle \langle m, v' \rangle$ holds or, equivalently, $\langle m, \langle e, v' \rangle v - \langle e', v \rangle v' \rangle = 0$ for all $m \in M \cap \sigma^{\vee}$. We may assume that the previous equality holds for all $m \in M$, due to the bilinearity of the pairing and the fact that the linear span of the cone $\sigma^\vee$ coincides with $M_\QQ$. Thus, this equality is equivalent to
	\begin{equation*}
		\langle e, v' \rangle v - \langle e', v \rangle v' = 0.
	\end{equation*}
	
	Since the vectors $v$ and $v'$ are not proportional, the previous equality holds simultaneously with the equalities $\langle e, v' \rangle = 0$ and $\langle e', v \rangle = 0$, as required.
\end{proof}

Let us prove the following technical lemma before proceeding with the proof of the combinatorial conditions:

\begin{lemma}\label{tex}
	Let $v$ and $v'$ be vectors in the lattice $N$. Then there exist $e$ and $e' \in M$ satisfying the following equalities:
	
	\begin{equation}\label{sistema}
		\langle e, v \rangle = -1, \;\; \langle e, v' \rangle = 0, \;\; \langle e', v \rangle = 0, \;\; \langle e', v' \rangle = -1
	\end{equation}
	if and only if the vectors $v$, $v'$ can be included in a basis of the lattice $N$.
	
\end{lemma}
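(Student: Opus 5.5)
Both directions rest on the standard fact that a family of vectors in the lattice $N$ extends to a basis exactly when it spans a saturated sublattice of the right rank. Concretely, we shall use that $v,v'$ extend to a basis of $N$ if and only if the homomorphism $N\to\ZZ^2$, $u\mapsto(\langle e,u\rangle,\langle e',u\rangle)$, can be chosen so that $v,v'$ map to a basis of $\ZZ^2$. Equivalently, the map $M\to\ZZ^2$, $m\mapsto(\langle m,v\rangle,\langle m,v'\rangle)$, is surjective.

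For the direction ``basis $\Rightarrow$ solution'', suppose $v=u_1$, $v'=u_2$ for a basis $u_1,\dots,u_n$ of $N$. Let $u_1^*,\dots,u_n^*$ be the dual basis of $M=\mathrm{Hom}(N,\ZZ)$. Then $e:=-u_1^*$ and $e':=-u_2^*$ satisfy the system~(\ref{sistema}) directly. This direction is routine.

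For the converse, assume $e,e'\in M$ satisfy~(\ref{sistema}). Consider the homomorphism $\pi\colon N\to\ZZ^2$, $u\mapsto(-\langle e,u\rangle,-\langle e',u\rangle)$. By~(\ref{sistema}) we have $\pi(v)=(1,0)$ and $\pi(v')=(0,1)$, so $\pi$ is surjective and $s\colon\ZZ^2\to N$, sending the standard basis to $v,v'$, is a section of $\pi$. Hence $N=\ZZ v\oplus\ZZ v'\oplus\Ker(\pi)$, and in particular $v,v'$ are linearly independent.

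The remaining step is to note that $\Ker(\pi)$ is a subgroup of the free abelian group $N$, hence free, and of rank $n-2$. Choose a basis $w_1,\dots,w_{n-2}$ of it. Then $v,v',w_1,\dots,w_{n-2}$ is a basis of $N$. The only point needing a little care is this splitting argument, namely that a surjection onto a free module splits and that subgroups of free abelian groups of finite rank are free. Both are standard, so I expect no real obstacle. The lemma is essentially the observation that the $2\times 2$ minor condition $\det(\langle e_i,v_j\rangle)=\pm1$ for some $e_1,e_2\in M$ characterizes primitivity of the pair $v,v'$.
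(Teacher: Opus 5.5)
Your proof is correct. The forward direction is the paper's argument: the first two rows of $-P^{-1}$ are exactly your $-u_1^*$ and $-u_2^*$. Your converse, however, takes a different route.

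The paper argues by contradiction. If $v,v'$ do not extend to a basis, then by the criterion cited from Cassels all $2\times 2$ minors of the matrix with rows $v,v'$ share a divisor $d\neq\pm1$. Eliminating between $\langle e,v\rangle=-1$ and $\langle e,v'\rangle=0$ then shows that $d$ divides every coordinate of $v'$, which contradicts $\langle e',v'\rangle=-1$. You instead observe that the system makes $s\colon\ZZ^2\to N$ a section of $\pi$. Hence $N=\ZZ v\oplus\ZZ v'\oplus\Ker(\pi)$, and a basis of $\Ker(\pi)$ completes $v,v'$.

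Your version has several advantages:
\begin{itemize}
\item It is self-contained, needing only that subgroups of finitely generated free abelian groups are free, rather than the external minor criterion.
\item It is coordinate-free and yields linear independence of $v,v'$ as a by-product.
\item It extends verbatim to $k$ vectors $v_1,\dots,v_k$ with $\langle e_i,v_j\rangle=-\delta_{ij}$.
\item It shows that all that matters is unimodularity of the pairing matrix $(\langle e_i,v_j\rangle)$, as your closing remark says. This makes Remark~\ref{rem} immediate: dropping one off-diagonal equation leaves a triangular matrix with $-1$ on the diagonal. After composing $\pi$ with a suitable element of $GL_2(\ZZ)$, the same splitting goes through.
\end{itemize}

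The paper's elimination also yields that remark, since it never uses $\langle e',v\rangle=0$ (the other case follows by symmetry), but less transparently. One small presentational point: the ``standard fact'' you announce in your first paragraph is really what your argument goes on to prove, so you can drop that framing.
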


\begin{proof}
	Suppose that the vectors $v$, $v'$ can be included in a basis of the lattice $N$. Then there exists an invertible matrix $P \in GL_n(\ZZ)$ such that its first two columns are $v$ and $v'$. Taking the first two rows of the matrix $-P^{-1}$ as $e$ and $e'$, from the equality $-P^{-1}P = -E$ we obtain the equalities (\ref{sistema}).
	
	Conversely, assume that the equalities (\ref{sistema}) are satisfied. Suppose that $v$, $v'$ cannot be included in a basis of the lattice $N$. Then consider a matrix such that its rows are the vectors $v$ and $v'$. All $2 \times 2$ - minors of the matrix are divisible by a number $d \neq \pm 1$ by~\cite[I.2.3, Lemma 2]{C}. Let $v = (a_1, \ldots, a_n)$, $v' = (b_1, \ldots, b_n)$ and $e = (e_1, \ldots, e_n)$. Then the following equalities hold:
	\begin{equation}\label{1}
		\sum \limits_{i = 1}^{n} a_ie_i = -1,
	\end{equation}
	\begin{equation}\label{2}
		\sum \limits_{i = 1}^{n} b_ie_i = 0.
	\end{equation}
	Multiplying the equality (\ref{2}) by $a_1$ and subtracting from it the equality (\ref{1}) multiplied by $b_1$, we obtain the following:
	\begin{equation}\label{3}
		\sum \limits_{i = 2}^{n} (a_1b_i - a_ib_1)e_i = b_1.
	\end{equation}
	Every term on the left-hand side of the equality (\ref{3}) is divisible by $d$, so $b_1$ is divisible by $d$. Similarly, it can be proved that all $b_i$ are divisible by $d$. Then it follows that the vector $v'$ is not primitive, hence, the equality $\langle e', v' \rangle = -1$ cannot hold. This contradiction completes the proof of the lemma.
\end{proof}

\begin{remark}\label{rem}
	If we remove one of the equalities $\langle e, v' \rangle = 0$ or $\langle e', v \rangle = 0$ from (\ref{sistema}), then, as is clear from the proof of Lemma~\ref{tex}, the vectors $v$, $v'$ will still extend to a basis of the lattice $N$.
\end{remark}

\begin{proposition}\label{commutelnd}
	Let $v$ and $v'$ be primitive vectors on distinct extremal rays $\rho$ and $\rho'$ of the cone~$\sigma$. Then there exist corresponding commuting homogeneous LNDs $\delta_e$ and $\delta_{e'}$ if and only if $\rho$ and $\rho'$ lie in a common two-dimensional face of the cone $\sigma$ and the vectors $v$, $v'$ can be included in a basis of the lattice $N$.
\end{proposition}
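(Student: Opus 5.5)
By Lemma~\ref{isvtorlem}, commuting homogeneous LNDs $\delta_e$, $\delta_{e'}$ attached to $\rho$ and $\rho'$ exist if and only if there are Demazure roots $e$ of $\rho$ and $e'$ of $\rho'$ with $\langle e, v'\rangle = 0$ and $\langle e', v\rangle = 0$. Together with the defining conditions $\langle e, v\rangle = -1$ and $\langle e', v'\rangle = -1$, these are exactly the equalities (\ref{sistema}). In addition, all the inequalities $\langle e, v_j\rangle \geqslant 0$ and $\langle e', v_j\rangle \geqslant 0$ must hold for the remaining primitive vectors $v_j$. So the plan is to translate "there is a common two-dimensional face" into a statement about a linear form, and to use Lemma~\ref{tex} for the lattice part.

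\emph{Necessity.} Suppose such $e, e'$ exist. Then Lemma~\ref{tex} immediately gives that $v, v'$ extend to a basis of $N$. For the face condition, I would consider $m := -(e+e') \in M$, or better $u := e + e'$.

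\begin{itemize}
\item We have $\langle u, v\rangle = -1$, $\langle u, v'\rangle = -1$, and $\langle u, v_j\rangle \geqslant 0$ for every $j$ different from the indices of $\rho$ and $\rho'$.
\item Take any $w \in \sigma^\vee \cap M$ lying in the relative interior of $\rho^\perp \cap \sigma^\vee$. Then $\langle w, v\rangle = 0$ and $\langle w, v_j\rangle > 0$ for every other ray, in particular $\langle w, v'\rangle > 0$.
\item A cleaner choice is to look for $m\in\sigma^\vee$ with $m^\perp\cap\sigma=\mathrm{cone}(v,v')$. I would take $m := k w_0 + u$ with $k$ large. Here $w_0 \in \sigma^\vee$ is chosen so that it vanishes on $v$ and $v'$ and is positive on every other $v_j$.
\item Such a $w_0$ exists precisely when $\mathrm{cone}(v,v')$ is a face, so this naive approach is circular.
\end{itemize}

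Instead I would argue directly. Suppose $\mathrm{cone}(v,v')$ is not a face. Then it meets the relative interior of some larger face, so some positive combination $\alpha v + \beta v'$ with $\alpha,\beta>0$ equals a nonnegative combination $\sum_j \gamma_j v_j$ of the other rays (this is the standard characterization, and an extremal-ray argument rules out degenerate cases). Pairing this equality with $u$ gives a left side of $-\alpha-\beta<0$ and a right side that is $\geqslant 0$, a contradiction. Hence $\rho$ and $\rho'$ span a two-dimensional face.

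\emph{Sufficiency.} Suppose $\tau=\mathrm{cone}(v,v')$ is a face and $v,v'$ extend to a basis. By Lemma~\ref{tex}, take $e_0, e_0'$ satisfying (\ref{sistema}). Since $\tau$ is a face, there is $w \in \sigma^\vee\cap M$ with $\langle w,v\rangle = \langle w,v'\rangle = 0$ and $\langle w,v_j\rangle>0$ for all other rays. Put $e := e_0 + k w$ and $e' := e_0' + k w$ for $k$ large. These still satisfy (\ref{sistema}), and they are nonnegative on all other $v_j$ because there are finitely many rays. Hence $e$ and $e'$ are Demazure roots of $\rho$ and $\rho'$, and by Lemma~\ref{isvtorlem} the derivations $\delta_e$ and $\delta_{e'}$ commute.

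The main obstacle is the necessity of the face condition: making rigorous the claim that if $\mathrm{cone}(v,v')$ is not a face, some strictly positive combination of $v$ and $v'$ lies in the cone spanned by the other rays. I would prove it as follows. Let $F$ be the smallest face containing $\tau$. If $\tau \neq F$, then a point $p$ in the relative interior of $\tau$ is in the relative interior of $F$. Since $F$ is pointed and spanned by its rays, and $F$ has a ray other than $\rho$ and $\rho'$ (otherwise $F=\tau$), I would write $p$ as a combination using the other rays with positive weight and reduce. Specifically, I would move along the direction $p - \epsilon v_j$ until leaving $\tau$, which yields an expression $\alpha v+\beta v' = \sum \gamma_j v_j$ after possibly cancelling. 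If this reduction gets messy, a fallback is to pair directly with $u$ on the relative interior of $F$.
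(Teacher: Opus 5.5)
Your sufficiency direction, and the lattice half of necessity, coincide with the paper's. The weak point is the face half of necessity, exactly where you suspected.

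Your contradiction argument works once one has the following lemma: if $\tau=\mathrm{cone}(v,v')$ is not a face, then $av+bv'=q$ with $a,b>0$ and $q$ a nonnegative combination of the remaining $v_j$. Your sketch does not establish it. Writing $p-\epsilon v_j\in F$ through the rays of $F$ yields a relation $a'v+b'v'=q$ with $q\neq 0$ in the cone of the other rays. However, it gives no information on the signs of $a',b'$, and ``after possibly cancelling'' skips precisely this. The fallback of pairing $u$ with $\mathrm{relint}(F)$ produces no contradiction without such a relation.

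The fix is short:
\begin{itemize}
\item If $a',b'\leqslant 0$, then $q+|a'|v+|b'|v'=0$, contradicting pointedness of $\sigma$.
\item If $a'<0<b'$, then $b'v'\in\rho'$ is the sum of $q$ and $|a'|v\notin\rho'$, contradicting extremality of $\rho'$. The case $b'<0<a'$ is symmetric.
\item If $b'=0<a'$, then $a'v=q$, where the summand $\epsilon v_j$ of $q$ is not on $\rho$, contradicting extremality of $\rho$. The case $a'=0<b'$ is symmetric.
\end{itemize}
With this inserted, your argument is complete.

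For comparison, the paper needs no such lemma and argues directly rather than by contradiction. The vector $w$ from your second bullet exists unconditionally because $\rho$ is an extremal ray; it is the paper's $\alpha$. Set $k:=\langle\alpha,v'\rangle>0$. Then $\alpha+ke'$ pairs to zero with $v$ (as $\langle e',v\rangle=0$) and with $v'$, and strictly positively with every other $v_j$. So it exhibits $\mathrm{cone}(v,v')$ as a face; the paper writes the symmetric version $\omega=\alpha+\beta+le+ke'$.

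Your attempt looked circular only because you corrected, by $u=e+e'$, a functional already vanishing on both rays. You should instead correct the functional vanishing on $\rho$ alone by a multiple of $e'$. The paper's route is a one-line explicit certificate, and it also shows that $\langle e',v\rangle=0$ by itself forces $\rho'\in\mathcal E(\rho)$ (cf.\ the remark after Proposition~\ref{kriteriylnd}). Yours is the dual, Farkas-type form of the same fact: valid, but dependent on the extra convex-geometry lemma above.
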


\begin{proof}
	Suppose that the corresponding homogeneous LNDs commute. This is equivalent to the following equalities: $\langle e, v' \rangle = 0$, $\langle e', v \rangle = 0$ by Lem\-ma~\ref{isvtorlem}. Together with the conditions that $e$ and $e'$ are Demazure roots, we obtain equalities~(\ref{sistema}). Then, by Lemma~\ref{tex}, the vectors $v$, $v'$ can be included in a basis of the lattice~$N$.
	
	The vector $v$ lies on an extremal ray of the cone $\sigma$, hence, there exists a supporting hyperplane that intersects $\sigma$ exactly along the edge $\rho$. That is, there exists $\alpha \in M$ (defining this hyperplane) such that the following holds:
	\begin{equation*}
		\langle \alpha, v \rangle = 0, \;\; \langle \alpha, v' \rangle = k > 0, \;\; \langle \alpha, v_i \rangle > 0,
	\end{equation*}
	where $v_i$ are the primitive vectors on the remaining extremal rays of the cone $\sigma$. Similarly, there exists $\beta \in M$ such that the following holds:
	\begin{equation*}
		\langle \beta, v' \rangle = 0, \;\; \langle \beta, v \rangle = l > 0, \;\; \langle \beta, v_i \rangle > 0.
	\end{equation*}
	
	Denote $\omega := \alpha + \beta + le + ke'$. Then $\langle \omega, v \rangle =\langle \omega, v' \rangle = 0$ and $\langle \omega, v_i \rangle > 0$, which means that $v$ and $v'$ lie on the same two-dimensional face of the cone $\sigma$.
	
	Now we prove the converse: we obtain equalities (\ref{sistema}) for some $\tilde{e}$, $\tilde{e}' \in M$ from the fact that $v$ and $v'$ can be included in a basis of the lattice $N$ by Lemma~\ref{tex}. However, $\tilde{e}$ or $\tilde{e}'$  may not be Demazure roots, since their pairings with the other vectors $v_i$ may be negative.
	
	The fact that $v$ and $v'$ lie on the same two-dimensional face of $\sigma$ implies the existence of an element $\omega$ such that $\langle \omega, v \rangle =\langle \omega, v' \rangle = 0$ and $\langle \omega, v_i \rangle > 0$ since the cone $\sigma$ is pointed. Then, for sufficiently large $k$ and $l \in \mathbb{N}$, the elements $e := \tilde{e} + k\omega$ and $e' := \tilde{e}' + l\omega$ become suitable Demazure roots.
\end{proof}

For a given extremal ray $\rho$, we denote the set of all extremal rays that lie with $\rho$ on the same two-dimensional face of the cone $\sigma$ by $\mathcal E(\rho)$ .

\begin{proposition}\label{kriteriylnd}
	Let $e$ be a Demazure root corresponding to an extremal ray $\rho$ of the cone $\sigma$. Then the LND $\delta_e$ is maximal if and only if $\langle e, v' \rangle \neq 0$ for every extremal ray $\rho' \in \mathcal E(\rho)$.
\end{proposition}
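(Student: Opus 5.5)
By Proposition~\ref{odnor} it suffices to consider homogeneous LNDs commuting with $\delta_e$. Every homogeneous LND is, up to a scalar, of the form $\delta_{e'}$ for a Demazure root $e'$. If $e'$ corresponds to the same ray $\rho$, then $\delta_{e'}\sim\delta_e$ and there is nothing to check. So $\delta_e$ is maximal if and only if no Demazure root $e'$ attached to a ray $\rho'\neq\rho$ satisfies $[\delta_e,\delta_{e'}]=0$. By Lemma~\ref{isvtorlem} this commutation is equivalent to $\langle e,v'\rangle=0$ and $\langle e',v\rangle=0$. The proof therefore reduces to a combinatorial statement about $e$ and the rays $\rho'$.

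\emph{Sufficiency.} Assume $\langle e,v'\rangle\neq 0$ for all $\rho'\in\mathcal E(\rho)$, and suppose some $\delta_{e'}$ with $\rho'\neq\rho$ commutes with $\delta_e$. Lemma~\ref{isvtorlem} gives $\langle e,v'\rangle=0$ and $\langle e',v\rangle=0$. Together with $\langle e,v\rangle=\langle e',v'\rangle=-1$ these are exactly the equalities~(\ref{sistema}). The first half of the proof of Proposition~\ref{commutelnd} then shows that $\rho$ and $\rho'$ span a two-dimensional face, so $\rho'\in\mathcal E(\rho)$. This contradicts $\langle e,v'\rangle\neq0$.

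\emph{Necessity.} Assume $\langle e,v'\rangle=0$ for some $\rho'\in\mathcal E(\rho)$. I will construct a Demazure root $e'$ for $\rho'$ with $\langle e',v\rangle=0$; then $\delta_{e'}$ commutes with $\delta_e$ and is not equivalent to it, so $\delta_e$ is not maximal.

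\begin{enumerate}
\item Integrality. From $\langle e,v\rangle=-1$ and $\langle e,v'\rangle=0$, Remark~\ref{rem} shows that $v,v'$ extend to a basis of $N$.
\item A candidate. By Lemma~\ref{tex} there is $\tilde e'\in M$ with $\langle\tilde e',v\rangle=0$ and $\langle\tilde e',v'\rangle=-1$.
\item Correcting the other rays. Since $\rho,\rho'$ lie on a common two-dimensional face and $\sigma$ is pointed, there is $\omega\in M$ with $\langle\omega,v\rangle=\langle\omega,v'\rangle=0$ and $\langle\omega,v_i\rangle>0$ for every other primitive ray generator $v_i$.
\item The root. For $l\gg0$, set $e':=\tilde e'+l\omega$. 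It keeps the pairings with $v$ and $v'$ and pairs nonnegatively with all other $v_i$, so $e'$ is a Demazure root for $\rho'$ with $\langle e',v\rangle=0$.
\end{enumerate}

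The only delicate point is step 1: extracting the basis property from the single equation set $\langle e,v\rangle=-1$, $\langle e,v'\rangle=0$. This is exactly what Remark~\ref{rem} supplies, and once the basis property is in hand the rest follows the second half of the proof of Proposition~\ref{commutelnd}.
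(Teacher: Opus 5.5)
Your proof is correct and follows the paper's argument: you reduce to homogeneous LNDs via Proposition~\ref{odnor}, use Lemma~\ref{isvtorlem} together with the first half of the proof of Proposition~\ref{commutelnd} for sufficiency, and use Remark~\ref{rem} plus the $\omega$-correction from the second half of that proof for necessity. Keeping $e$ fixed and constructing only $e'$ is exactly what the paper's brief citation of Proposition~\ref{commutelnd} implicitly relies on, so your version just makes that step explicit.
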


\begin{proof}
	Suppose that there exists an LND $\partial$ commuting with $\delta_e$ but this LND is not equivalent to~$\delta_e$. We may assume that $\partial = \delta_{e'}$ is a homogeneous LND by Proposition~\ref{odnor}. Then $\langle e, v' \rangle = 0$ and $\rho' \in \mathcal E(\rho)$ by Proposition~\ref{commutelnd}.
	
	Conversely, suppose that for some $\rho' \in \mathcal E(\rho)$ we have $\langle e, v' \rangle = 0$. Then, by Remark~\ref{rem} and Proposition~\ref{commutelnd}, there exists $e'$ such that $\delta_e$ commutes with $\delta_{e'}$, but they are not equivalent.
\end{proof}

\begin{remark}
	It is easy to see that a Demazure root $e$ corresponding to an extremal ray $\rho$ cannot have zero pairing with the primitive vector on an extremal ray $\rho' \notin \mathcal E(\rho)$. Indeed, suppose $\langle e, v' \rangle = 0$. Let $\alpha \in M$ define a supporting hyperplane separating the extremal ray $\rho'$, with $\langle \alpha, v \rangle = k > 0$. Then $\omega := \alpha + ke$ defines a hyperplane separating both $\rho$ and $\rho'$, hence, they lie on the same two-dimensional face of the cone $\sigma$.
\end{remark}

\section{Isotropy subgroups of homogeneous LNDs on toric varieties}\label{razdel7}

Let $X$ be an affine toric variety with an effective action of an algebraic torus $\TT$, $B := \KK[X]$ be the algebra of regular functions on $X$ and $\dim X = n$. 

Note that $X$ is not assumed to be normal. Every affine non-normal toric variety can also be associated with a semigroup such that its semigroup algebra coincides with the algebra of regular functions on this variety, see~\cite[Theorem 1.1.16]{CLS}. However, in contrast to the normal case, for a non-normal variety this semigroup may not have the form $\sigma^\vee \cap M$ for a rational polyhedral cone~$\sigma$. Nevertheless, a non-normal toric variety can be associated with a cone $\sigma$ corresponding to the normalization of this variety, see~\cite[Proposition 1.3.8]{CLS}.

In this setting, unlike the normal case, it may happen that not every Demazure root of the cone $\sigma$ corresponds to a homogeneous LND of the algebra $B$. Nevertheless, every homogeneous LND of $B$ is still of the form $c\delta_e$ for some Demazure root $e$ and $c \in \mathbb{K}$, see~\cite{BG} and~\cite{DiL}. Since LNDs that are proportional with some non-zero scalar multiplier have coinciding stabilizers, we may assume without loss of generality that $c = 1$.

Consider a homogeneous LND $\delta = \delta_e$, where $e = (e_1, \ldots, e_n)$ is a Demazure root corresponding to a primitive vector $v$ on an extremal ray $\rho$ of the cone $\sigma$.

The torus $\TT$ acts on a homogeneous function $\chi^m$ by multiplication by $t^m := t_1^{m_1} \ldots t_n^{m_n}$, where $m = (m_1, \ldots, m_n)$. Then for any $t \in \TT$, we obtain:
\begin{equation*}
	t \cdot \delta(\chi^m) =  t \cdot(\langle m, v \rangle \chi^{m+e}) = t^{m+e} \langle m, v \rangle \chi^{m+e},
\end{equation*}
\begin{equation*}
	\delta(t \cdot \chi^m) =  t^{m} \langle m, v \rangle \chi^{m+e}.
\end{equation*}

Therefore, the condition that an element of the torus $\TT$ belongs to the stabilizer $\Aut(B)_\delta$ of the homogeneous LND $\delta$ is expressed by equation $t^e = 1$. This equation defines an $(n-1)$-dimensional subtorus since the vector $e$ is primitive. We denote this subtorus by $\TT_\delta$. The subtorus commutes with $\delta$, so its action on $B$ can be restricted to $A := \Ker(\delta)$. Moreover, $A$ is a finitely generated subalgebra of $B$; thus, $\TT_\delta$ acts on $Y := \mathrm{Spec}(A)$. Hence, $\dim(Y) = n-1$ by Proposition~\ref{slice}.
\begin{lemma}\label{torlem}
	$Y$ is a toric variety and the action of the torus~$\TT_\delta$ is effective on $Y$.
\end{lemma}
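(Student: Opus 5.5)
Since $\delta=\delta_e$ is homogeneous, its kernel $A$ is spanned by characters. Indeed, $\delta(\sum c_m\chi^m)=\sum c_m\langle m,v\rangle\chi^{m+e}$, and distinct $m$ give distinct $m+e$. Hence $A$ is the semigroup algebra of $S_A:=S\cap\rho^\bot$, where $S\subseteq\sigma^\vee\cap M$ is the semigroup of $X$; in the normal case this is exactly the description of $\Ker(\delta_e)$ recalled above. So $Y=\mathrm{Spec}(A)$ is the spectrum of an affine semigroup algebra of the finitely generated semigroup $S_A$, which sits inside the lattice $M_\rho:=\rho^\bot\cap M$. The torus with character lattice $\ZZ S_A$ acts on $Y$ with an open orbit, so $Y$ is toric for it. The real work is to show that this torus is $\TT_\delta$ acting effectively, and that $\ZZ S_A$ has rank $n-1$ (which is also consistent with $\dim Y=n-1$ from Proposition~\ref{slice}).

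First I identify the action of $\TT_\delta$ on $A$. An element $t\in\TT_\delta$ acts on $\chi^m\in A$ by $t^m$, so the action factors through the restriction of characters $M\to\mathfrak X(\TT_\delta)$. Because $\TT_\delta=\{t^e=1\}$ with $e$ primitive, $\mathfrak X(\TT_\delta)=M/\ZZ e$. The key observation is that the composite $M_\rho\hookrightarrow M\to M/\ZZ e$ is an isomorphism. For injectivity, $M_\rho\cap\ZZ e=0$ since $\langle e,v\rangle=-1\neq0$. For surjectivity, take any $m\in M$; then $m+\langle m,v\rangle e\in M_\rho$ and is congruent to $m$ modulo $e$. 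So $\TT_\delta$ is naturally the torus with character lattice $M_\rho$, of rank $n-1$.

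Next I check that $\ZZ S_A=M_\rho$. Effectiveness and the open orbit both follow from this, since an action of a torus on $\mathrm{Spec}\,\KK[S_A]$ is effective exactly when the characters in $S_A$ generate its character lattice. Since $S$ generates $M$ (effectiveness on $X$) and $\sigma^\vee\cap\rho^\bot$ is a face of $\sigma^\vee$ of dimension $n-1$, it is enough to show that $S\cap\rho^\bot$ generates the full lattice $M_\rho$, not merely a sublattice of full rank. This is the main obstacle in the non-normal case. I would argue as follows. Take $m\in M_\rho$ and write $m=s_1-s_2$ with $s_i\in S$. Apply $\delta$ repeatedly: $\delta^{k}(\chi^{s})$ is a nonzero multiple of $\chi^{s+ke}$ for $k\le\langle s,v\rangle$, so $s+\langle s,v\rangle e\in S\cap\rho^\bot$ for every $s\in S$. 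Then
\begin{equation*}
m=\bigl(s_1+\langle s_1,v\rangle e\bigr)-\bigl(s_2+\langle s_2,v\rangle e\bigr),
\end{equation*}
because $\langle s_1,v\rangle=\langle s_2,v\rangle$ when $m\in\rho^\bot$. Hence $\ZZ S_A=M_\rho$.

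Finally, $\TT_\delta$ acts on $Y=\mathrm{Spec}\,\KK[S_A]$ through the identification $\mathfrak X(\TT_\delta)\cong M_\rho=\ZZ S_A$. This action is faithful, and the orbit of the point where all characters are nonzero is open. Therefore $Y$ is toric with respect to $\TT_\delta$ and the action is effective.
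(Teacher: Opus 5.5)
Your proof is correct, but it takes a different route from the paper. The paper argues structurally. Suppose some $t\in\TT_\delta$ acted trivially on $A$, and take a $\TT$-homogeneous local slice $f$ with $g=\delta(f)\in A$. Then $t\cdot f=f$ is impossible, since otherwise $t$ would be trivial on $A_g[f]=B_g$ and hence on $B$. On the other hand, $t^m g=\delta(t\cdot f)=t\cdot g=g$ forces $t\cdot f=f$, a contradiction. The open orbit then follows from the dimension count $\dim\TT_\delta=n-1=\dim Y$, using Proposition~\ref{slice}.

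You instead work with the semigroup. The splitting $M=M_\rho\oplus\ZZ e$ identifies $\mathfrak X(\TT_\delta)$ with $M_\rho$. The shift $s\mapsto s+\langle s,v\rangle e$, the combinatorial counterpart of iterating $\delta$ down into the kernel, shows that $S\cap\rho^\bot$ generates all of $M_\rho$. This gives effectiveness and the open orbit at once.

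The paper's effectiveness argument is shorter and uses only homogeneity and the slice structure, so it applies to any effective torus action normalizing an LND. Your argument is specific to semigroup algebras, but it bypasses the slice theorem and the dimension count. It also handles the non-normal case explicitly, where a finite-index sublattice would not suffice. Finally, it yields more: an explicit presentation of $Y$ as $\mathrm{Spec}\,\KK[S\cap\rho^\bot]$ with character lattice $M_\rho$. Through the injectivity of $M_\rho\to\mathfrak X(\TT_\delta)$, it also gives the subsequent corollary on $\TT$- versus $\TT_\delta$-homogeneity in $A$ essentially for free.
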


\begin{proof}
	Suppose that the action of $\TT_\delta$ is not effective, then there exists $t\in \TT_\delta$ such that $t \cdot g = g$, $\forall g \in A$. Choose a $\TT$-homogeneous local slice $f$ for the derivation $\delta$ in the algebra $B$. Set $g := \delta(f)$. We will prove that $t \cdot f \neq f$.
	
	Suppose $t \cdot f = f$. The action of $t$ extends to $B_{g}$, and $t$ acts trivially on $A_{g} \subset B_{g}$. We have $ B_{g} = A_{g}[f]$ by Proposition~\ref{slice}, so $t$ acts trivially on $ B_{g}$, therefore it acts trivially on $B$. This contradicts the effectiveness of the $\TT$-action on $B$, thus $t \cdot f \neq f$.
	
	Since $t$ commutes with $\delta$ and since $g \in A$, we have:
	\begin{equation*}
		\delta (t \cdot f) = t \cdot \delta(f) = t \cdot g = g,
	\end{equation*}
	but $t \cdot f = t^m f \neq f$, where $m$ is the degree of $f$. Hence $t^m \neq 1$ and $\delta (t \cdot f) = t^m g \neq g$. This contradiction implies that the action of $\TT_\delta$ on $Y$ is effective.
	
	Thus, we have an effective action of the $(n-1)$-dimensional torus $\TT_\delta$ on $(n-1)$-dimensional variety $Y$; therefore, this action admits an open orbit, which completes the proof of the lemma.
\end{proof}

\begin{corollary}
	An element of the kernel $A$ is $\TT$-homogeneous if and only if it is $\TT_\delta$-homogeneous. 
\end{corollary}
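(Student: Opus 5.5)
Since $\TT_\delta \subseteq \TT$, one direction is immediate: a $\TT$-semi-invariant is a $\TT_\delta$-semi-invariant. The work is in the converse. The plan is to compare the two gradings on $A$ induced by $\TT$ and by $\TT_\delta$.

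The kernel $A=\bigoplus_{m\in \rho^\perp\cap S}\KK\chi^m$ is spanned by characters $\chi^m$ with $\langle m,v\rangle=0$, where $S$ is the semigroup of $B$. This holds in the non-normal case as well: $\delta$ is $\TT$-homogeneous, so $A$ is a graded subalgebra and is spanned by the $\chi^m\in B$ that it contains, and for these $\delta(\chi^m)=\langle m,v\rangle\chi^{m+e}$ vanishes exactly when $\langle m,v\rangle=0$. The $\TT_\delta$-weight of $\chi^m$ is the restriction of $m$ to $\TT_\delta$. Since $\TT_\delta=\{t^e=1\}$ and $e$ is primitive, the character group of $\TT_\delta$ is $M/\ZZ e$. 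So two characters $\chi^m,\chi^{m'}$ of $A$ have the same $\TT_\delta$-weight if and only if $m-m'\in\ZZ e$.

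The key step is to show that distinct $m,m'\in\rho^\perp\cap M$ never differ by a nonzero multiple of $e$. Indeed, $\langle m-m',v\rangle=0$, while $\langle ke,v\rangle=-k\neq 0$ for $k\neq 0$. Hence the restriction map $M\to M/\ZZ e$ is injective on $\rho^\perp\cap M$, and each $\TT_\delta$-weight space of $A$ is either zero or a single line $\KK\chi^m$.

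It follows that a $\TT_\delta$-homogeneous element of $A$ lies in one weight space, so it is a scalar multiple of a single character $\chi^m$ and is therefore $\TT$-homogeneous. This proves the converse. The only point needing care is the injectivity step; it uses just the Demazure root condition $\langle e,v\rangle=-1$ and the description of $A$ as the span of the characters orthogonal to $v$. Lemma~\ref{torlem} is not actually needed here, although it is consistent with this picture: the weights of $A$ under $\TT_\delta$ are pairwise distinct, as one expects for a toric variety.
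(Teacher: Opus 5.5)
Your proof is correct, and it takes a different route from the paper at the decisive step. Both arguments begin the same way: the $\TT$-homogeneous components of an element of $A$ again lie in $A$. The paper then notes that these components must share one $\TT_\delta$-weight. It concludes that they are proportional by invoking Lemma~\ref{torlem}: since $Y=\mathrm{Spec}(A)$ is toric for an effective $\TT_\delta$-action, the $\TT_\delta$-weight spaces of $A$ are one-dimensional.

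You prove this one-dimensionality directly from the combinatorics. A character trivial on $\TT_\delta=\{t^e=1\}$ lies in $\ZZ e$. Since $\langle ke,v\rangle=-k$, two distinct $m,m'\in\rho^\perp\cap M$ cannot differ by a nonzero multiple of $e$. In fact your computation gives a little more: $M=(\rho^\perp\cap M)\oplus\ZZ e$, so restriction identifies $\rho^\perp\cap M$ with $\mathfrak{X}(\TT_\delta)$.

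What each approach buys: yours is more elementary and self-contained. It uses only the Demazure root condition $\langle e,v\rangle=-1$ and the monomial description of $A$, which you correctly justify in the non-normal case. It bypasses the local-slice effectiveness argument and the dimension count in Lemma~\ref{torlem}. The paper's route is more conceptual: it derives the corollary from the geometric fact that $Y$ is toric under $\TT_\delta$. The paper needs that fact anyway later, in the proof of Theorem~\ref{toric}, to rescale the generators $y_i$ by an element of $\TT_\delta$.
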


\begin{proof}
	Since $\TT_\delta$ is a subtorus of $\TT$, $\TT$-homogeneity implies $\TT_\delta$-homogeneity in an obvious way. Now we prove the converse.
	
	Suppose that an element $a \in A$ is $\TT_\delta$-homogeneous. Consider the decomposition $a = \sum a_i$ into $\TT$-homogeneous components. Since $\delta(a) = \sum \delta(a_i) = 0$ and all $\delta(a_i)$ have distinct $\TT$-degrees, we obtain $a_i \in A \;\; \forall i$. Apply $t \in \TT_\delta$ to~$a$:
	\begin{equation*}
		\sum_i\lambda a_i = \lambda a = t \cdot a = \sum_i t \cdot a_i = \sum_i \lambda_i a_i.
	\end{equation*}
	This implies that all $a_i$ have the same $\TT_\delta$-degrees, then they are proportional, by Lemma~\ref{torlem}, hence $a$ is $\TT$-homogeneous.
\end{proof}

In what follows, when we say that an element of the kernel $A$ is homogeneous, we mean that it is homogeneous with respect to both tori $\TT$ and $\TT_\delta$.

From now on, we assume that $\delta$ is a maximal LND. A combinatorial criterion for an LND to be maximal in the case of normal $X$ was given in the previous section in Proposition~\ref{kriteriylnd}. For non-normal $X$, the condition of Proposition~\ref{kriteriylnd} remains sufficient.

Let $y_1, \ldots, y_k, z_1 \ldots, z_m$ be homogeneous, irreducible, pairwise non-proportional generators of the algebra $B$, such that $y_1, \ldots, y_k$ are generators of the algebra $A$. Denote the subgroup of $\Aut_\delta(B)$ consisting of automorphisms that send $\TT$-homogeneous elements of $B$ to $\TT$-homogeneous elements and permute the set $\{ y_1, \ldots, y_k \}$ by $S_\delta$.

\begin{theorem}\label{toric}
	Let $\delta$ be a maximal homogeneous LND on a toric variety $X$ with an effective action of a torus $\TT$. Then
	\begin{enumerate}
		\item[1)] The group $S_\delta$ is isomorphic to a subgroup of the symmetric group $S_k$ on the set $y_1, \ldots, y_k$. In particular, the group $S_\delta$ is finite.
		\item[2)] $\Aut(B)_\delta = (S_\delta \ltimes \TT_\delta) \ltimes \mathcal{U}(\delta).$
	\end{enumerate}
\end{theorem}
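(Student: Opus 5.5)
The strategy is to study the image of the restriction homomorphism $\Theta\colon \Aut(B)_\delta \to \Aut(A)$. Its kernel is $\mathcal{U}(\delta)$ by Proposition~\ref{Ker}, and we will show that $\Theta(\Aut(B)_\delta)$ is generated by the restrictions of $\TT_\delta$ and of $S_\delta$.

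\textbf{Step 1 (maximal torus).} First we check that $\TT_\delta$ is a maximal torus in $\Aut(B)_\delta$. Any torus $\TT'\supseteq\TT_\delta$ in $\Aut(B)_\delta$ restricts to $A$. Its kernel there lies in $\Ker(\Theta)=\mathcal{U}(\delta)$, which is unipotent, so $\TT'$ acts effectively on $Y=\mathrm{Spec}(A)$. Since $\dim Y=n-1$, this forces $\dim\TT'\le n-1$, and hence $\TT'=\TT_\delta$. Proposition~\ref{stab} then applies: every $\phi\in\Aut(B)_\delta$ maps $\TT_\delta$-semi-invariants of $A$ to semi-invariants. By the corollary above, these are exactly the $\TT$-homogeneous elements of $A$.

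\textbf{Step 2 (permuting generators).} Next we show that $\Theta(\phi)$ permutes the lines $\KK y_1,\dots,\KK y_k$ up to scalars. The $y_i$ are homogeneous, irreducible, pairwise non-proportional generators of $A$. Since $A$ is factorially closed in $B$, irreducibility in $B$ and in $A$ agree. Each $\phi(y_i)$ is homogeneous and irreducible in $A$, hence a monomial in the $y_j$ that is irreducible, i.e. proportional to a single $y_j$. So $\phi$ induces a permutation $\pi(\phi)\in S_k$ with $\phi(y_i)=c_iy_{\pi(i)}$. Because $Y$ is toric with the effective $\TT_\delta$-action of Lemma~\ref{torlem}, the weights of the $y_i$ span the character lattice of $\TT_\delta$. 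Hence the scalars $c_i$ can be realized by some $t\in\TT_\delta$, provided they are compatible with the relations among the $y_i$. These relations are binomial, and the permuted monomial relations are preserved, so the $c_i$ define a character-compatible point; this compatibility check is delicate and needs care. Then $t^{-1}\phi$ permutes the $y_i$ exactly on $A$.

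\textbf{Step 3 (lifting to $B$).} We must show that $t^{-1}\phi$ equals, modulo $\mathcal{U}(\delta)$, an element of $S_\delta$, i.e. one that maps $\TT$-homogeneous elements of $B$ to homogeneous ones. Apply Theorem~\ref{ind}: the tori $\psi\TT_\delta\psi^{-1}$ and $\TT_\delta$ are conjugate by some $\exp(h\delta)\in\mathcal{U}(\delta)$ (with $h\in\mathrm{Quot}(A)$), so after this correction $\psi$ normalizes $\TT_\delta$. The remaining generator direction is a homogeneous local slice $f$. Using $B_g=A_g[f]$ and $\delta(\psi(f))=\psi(g)$, we get $\psi(f)=u f+v$ with $u\in A_g$ homogeneous, and $v$ can be killed by a further element of $\mathcal{U}(\delta)$. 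This yields a normalizer element of the full torus $\TT=\TT_\delta\cdot(\text{one-parameter subgroup through }f)$, which therefore sends homogeneous elements to homogeneous ones. I expect this lift to be the main obstacle, since it requires controlling the element $v$ and the degree of $f$.

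\textbf{Step 4 (structure).} The map $S_\delta\to S_k$, $\phi\mapsto\pi(\phi)$, is injective. If $\pi(\phi)$ is trivial, then $\phi|_A=\mathrm{id}$, so $\phi\in\mathcal{U}(\delta)$. But a unipotent element that preserves homogeneity and commutes with $\TT$ up to normalization must be trivial, since a nonzero homogeneous replica $h\delta$ shifts degrees. This proves 1). For 2), we have $S_\delta\cap\TT_\delta=\{e\}$ because $\TT_\delta$ acts with distinct characters and any torus element fixing all $y_i$ is trivial on $A$. $S_\delta$ normalizes $\TT_\delta$ because it maps homogeneous elements to homogeneous ones. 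Finally, $\mathcal{U}(\delta)=\Ker\Theta$ is normal, which gives the semidirect decomposition.
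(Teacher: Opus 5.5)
Steps 1 and 4 are fine. Step 3, however, carries the whole generation statement in 2), and it is not proved; Step 2 also has a smaller gap.

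\emph{The gap in Step 3.} The claim that $v$ ``can be killed by a further element of $\mathcal{U}(\delta)$'' is unsupported. An element $\exp(a\delta)$ with $a\in\mathrm{Quot}(A)$ lies in $\mathcal{U}(\delta)$ only if $a\,\delta(B)\subseteq B$. Since $\Ker\Theta=\mathcal{U}(\delta)$, removing $v$ is equivalent to showing that the map of $B_g=A_g[f]$ which equals $\psi$ on $A$ and sends $f\mapsto uf$ restricts to an automorphism of $B$. That is exactly what has to be proved. The closing claim that the corrected map normalizes $\TT$ and preserves homogeneity is likewise only asserted. Moreover, you never use the normalization of $\TT_\delta$ obtained from Theorem~\ref{ind}.

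\emph{How the paper closes this step.} Proposition~\ref{stab} makes $G=\psi(g)$ homogeneous. Comparing $\TT$-degrees in $\delta(\psi(f))=G$ gives $\psi(f)=F+q$, with $F$ a homogeneous local slice and $q\in A$. Every homogeneous $h\in B$ satisfies $hg^l=f^rp$ with $p\in A$ homogeneous, because pairing degrees with $v$ fixes the power of $f$. Hence $G^l$ divides $F^r\psi(p)$. So the map $\pi_\tau$ with $\pi_\tau|_A=\psi|_A$ and $\pi_\tau(f)=F$ sends $B$ into $B$, as does its inverse, and it preserves homogeneity. Finally $\pi_\tau^{-1}\psi\in\mathcal{U}(\delta)$ by Proposition~\ref{Ker}.

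\emph{Finishing your route.} Your approach can be completed, and more cheaply, by actually using the normalization. If $\psi$ normalizes $\TT_\delta$, then $\psi(f)$ is $\TT_\delta$-homogeneous. Since $\mathfrak{X}(\TT_\delta)=M/\ZZ e$, all $\TT$-components of $\psi(f)$ have degrees in one coset $\deg F+\ZZ e$. The components of $q$ lie in $A$, so they have zero pairing with $v$, whereas $\langle \deg F+je,v\rangle=1-j$. Thus $q$ is homogeneous of degree $\deg F+e=\deg G$. As weight spaces of $B$ are one-dimensional, $q=c\,G=c\,\delta(F)$ for some $c\in\KK$. Then $\exp(-c\delta)\circ\psi$ agrees with $\psi$ on $A$ and sends $f\mapsto F$. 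It is automatically an automorphism of $B$, and it preserves $\TT$-homogeneity by the relation $hg^l=f^rp$. This avoids the paper's check that $\pi_\tau^{\pm1}$ preserve $B$.

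\emph{The compatibility in Step 2.} The check you flag closes cleanly. The map $\phi|_A$ sends the one-dimensional weight spaces of $A$ to weight spaces. So $\phi(\chi^s)=\lambda(s)\chi^{\theta(s)}$, where $\theta$ is an additive bijection of the weight semigroup of $A$ and $\lambda$ is multiplicative. Then $s\mapsto\lambda(\theta^{-1}(s))^{-1}$ extends to a homomorphism to $\KK^\times$ from the group generated by the weights of $A$. By the effectiveness in Lemma~\ref{torlem}, that group is $\mathfrak{X}(\TT_\delta)$, so the homomorphism is the required point $t\in\TT_\delta$.
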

\begin{proof}
	Let $\phi \in \Aut(B)_\delta$. Every irreducible semi-invariant element of the algebra~$A$ is proportional to some $y_j$. Hence, we have $\phi(y_i) = \lambda_i y_{\tau(i)}$ for some $\lambda_i \in \KK^\times$ and a permutation $\tau \in S_k$ by Proposition~\ref{stab}. Thus, we obtain a group homomorphism:
	\begin{equation*}
		\Phi \colon \Aut_\delta(B) \rightarrow S_k, \;\;\; \phi \mapsto \tau. 
	\end{equation*}
	
	We can find an element $t \in \TT_\delta$ such that $t \circ \phi(y_i) = y_{\tau(i)}$ by Lemma~\ref{torlem}. Set $\psi := t \circ \phi$.
	
	Suppose that for $\psi \in \Aut(B)_\delta$ we have $\Phi(\psi) = \tau$. Choose a $\TT$-homogeneous local slice $f \in B$. Then $g := \delta(f)$ is homogeneous; therefore, the element $G := \psi (g)$ is homogeneous by Proposition~\ref{stab}. Decompose $\psi(f)$ into a sum of $\TT$-homogeneous elements: $\psi(f) = \sum_i F_i$.
	
	Since $\delta$ is a homogeneous LND, the decomposition of $\delta \circ \psi(f)$ into a sum of homogeneous elements has the form $\delta \circ \psi(f) = \sum_i \delta(F_i)$. Moreover, $\delta \circ \psi(f) = \psi(g) = G$ is a homogeneous element. This implies that all but one of the $F_i$ belong to the algebra $A$, that is, $\psi(f) = F + q$, where $q \in A$ and $F$ is a homogeneous local slice.
	
	Let $h \in B$ be $\TT$-homogeneous. Then, by Proposition~\ref{slice}, we obtain
	\begin{equation*}
		h = \frac{1}{g^l}  \sum_{i = 0}^N p_i f^i \;\;\text{or} \;\; hg^l = \sum_{i = 0}^N p_i f^i,
	\end{equation*}
	where $p_i \in A$. From the decomposition of the right-hand side into a sum of $\TT$-homogeneous elements we have $hg^l = f^r p$, where $l,r \in \ZZ_{\geqslant 0}$ and $p \in A$ is homogeneous. Consequently, $\psi(h)G^l = \nonumber \\ = (F+q)^r \psi(p)$, with $\psi(p)$ homogeneous. It follows that $(F+q)^r \psi(p)$ is divisible by $G^l$. Since the decomposition of $(F+q)^r \psi(p)$ into homogeneous components contains the term $F^r \psi(p)$, we conclude that $F^r \psi(p)$ is divisible by $G^l$.
	
	The isomorphism $\psi$ can be extended to an isomorphism
	\begin{equation*}
		\widetilde \psi \colon B_g = A_g[f] \,\, \widetilde{\rightarrow} \,\, B_G = A_G[F+q].
	\end{equation*}
	
	Define an isomorphism
	\begin{equation*}
		\widetilde \pi_\tau \colon B_g = A_g[f] \,\, \widetilde{\rightarrow} \,\, B_G = A_G[F] 
	\end{equation*} 
	as follows: $\widetilde \pi_\tau|_A = \widetilde \psi|_A$ and $\widetilde \pi_\tau(f) = F$. Thus we obtain
	\begin{equation*}
		\widetilde \pi_\tau(h) = \frac{\widetilde \pi_\tau(f)^r \widetilde \pi_\tau(p)}{\widetilde \pi_\tau(g)^l} = \frac{F^r \psi(p)}{G^l} \in B,
	\end{equation*}
	that is, $\widetilde \pi_\tau$ can be restricted to the algebra $B$. 
	
	Set $\pi_\tau := \widetilde \pi_\tau|_B$. It can be proved that $\widetilde \pi_\tau^{-1}$ restricts to $B$ in the same manner. It is also clear that its restriction to $B$ is the inverse homomorphism to $\pi_\tau$, so $\pi_\tau$ is an isomorphism.
	
	Thus, for each permutation $\tau \in \mathrm{Im}(\Phi)$, we have constructed an isomorphism $\pi_\tau \colon B \simeq B$. Applying $\pi_\tau$ to the equality $hg^l = f^r p$, one easily sees that $\pi_\tau$ sends $\TT$-homogeneous elements $h$ to $\TT$-homogeneous elements. We now prove that the set of all such $\pi_\tau$ is a group $S_\delta \simeq \mathrm{Im}(\Phi)$.
	
	We show that $\pi_\alpha \circ \pi_\beta = \pi_{\alpha\beta}$. It easily implies that the set of all $\pi_\tau$ is a group that is isomorphic to a subgroup of $S_k$. Indeed, the automorphism $\pi_\alpha \circ \pi_\beta \circ \pi^{-1}_{\alpha\beta}$ acts trivially on $A$. Hence, it belongs to $\mathcal{U}(\delta)$ by Proposition~\ref{Ker}. However, it sends $\TT$-homogeneous elements to $\TT$-homogeneous elements, which implies that $\pi_\alpha \circ \pi_\beta \circ \pi^{-1}_{\alpha\beta} = \mathrm{id}_B$. 
	
	Suppose that an automorphism $\xi \in S_\delta$ satisfies the equation $\Phi(\xi) = \tau$. Then the automorphism $\pi_\tau^{-1} \circ \xi$ acts trivially on the algebra $A$ and sends $\TT$-homogeneous elements of $B$ to $\TT$-homogeneous ones. Therefore, this automorphism coincides with $\mathrm{id}_B$. This shows that the set of all $\pi_\tau$ coincides with the group $S_\delta$, which proves part (1) of the theorem.
	
	Now, for an arbitrary $\psi$ with $\Phi(\psi) = \tau$, the automorphism $\pi_\tau^{-1} \circ \psi$ acts trivially on~$A$. Then we have $\pi_\tau^{-1} \circ \psi \in \mathcal{U}(\delta)$, by Proposition~\ref{Ker}. Consequently, $\pi_\tau^{-1} \circ t \circ \varphi\in \mathcal{U}(\delta)$. Thus, the groups $S_\delta$, $\TT_\delta$ and $\mathcal{U}(\delta)$ generate $\Aut(B)_\delta$. We obtain the desired decomposition of $\Aut(B)_\delta$, since the intersection of $S_\delta$, $\TT_\delta$ and $\mathcal{U}(\delta)$ is trivial and $S_\delta$ sends homogeneous elements to homogeneous ones.
\end{proof}

\begin{remark}
	An automorphism in $S_\delta$ is completely determined by a permutation of the set $\{ y_1, \ldots, y_k \}$, as it can be seen from the proof of Theorem~\ref{toric}. Moreover, for any permutation $\tau \in S_k$, one can construct a homomorphism $\pi_\tau \colon B \rightarrow \mathrm{Quot}(B)$ as in Theorem~\ref{toric}. This homomorphism is an automorphism of the algebra $B$ if and only if $\pi_\tau$ sends each $z_i$ to $\lambda_i z_j$ for some $\lambda_i \in \KK^\times$. This provides a way to enumerate all such permutations $\tau \in S_k$ and algorithmically compute the group~$S_\delta$. Hence, we can compute the group $\Aut_\delta(B)$ for any given toric variety $X$ and maximal homogeneous LND $\delta$.
\end{remark}

\begin{example}
	Consider the variety $X$ given by the equation $xy^2 = zw$ with the following action of the three-dimensional torus $\TT$:
	\begin{equation*}
		\left( \begin{array}{c}
			t_1 \\
			t_2 \\
			t_3
		\end{array} \right) 
		\cdot 
		\left( \begin{array}{c}
			x \\
			y \\
			z \\
			w
		\end{array} \right)
		= \left( \begin{array}{c}
			t_2x \\
			t_2^{-1}t_3y \\
			t_1^{-1}t_2^{-1}t_3^2z \\
			t_1w
		\end{array} \right)
	\end{equation*}
	
	This variety corresponds to the cone $\sigma$ generated by the following vectors: $a = (0,0,1), \,\, b = \nonumber \\ = (2,0,1), \,\, c = (0,1,1), \,\, d = (1,1,1)$. Consider the Demazure root $e = (1,2,-1)$. The LND $$\delta := \delta_e = xw\frac{\partial}{\partial y} + 2x^2y \frac{\partial}{\partial z}$$ is maximal by Proposition~\ref{kriteriylnd}. Moreover, $\Ker(\delta) = \KK[x,w]$.
	
	There are neither automorphisms that send $y$ to $\lambda y$ and $z$ to $\mu z$, nor any automorphisms that send $y$ to $\lambda z$ and $z$ to $\mu y$ with $\lambda, \mu \in \KK^{\times}$ among the automorphisms of $B :=\KK[X]$ that permute $x$ and $w$. Hence, we conclude that the group $S_\delta$ is trivial. Then, by Theorem~\ref{toric}, the automorphism group $\Aut(B)_\delta = \TT_\delta \ltimes \mathcal{U}(\delta)$, where $\TT_\delta = \{(t_1, t_2, t_1t_2^2)\} \subset \TT$ is a two‑dimensional torus.

\end{example}

\section{Trinomial hypersurfaces}\label{s8}

This section provides the reader with the necessary information about trinomial varieties. For more details, see, for example, paper \cite[Construction 1.1]{HW}.

\begin{definition}
	A variety $X$ is called {\it rigid} if there are no non-trivial $\mathbb{G}_a$-actions on it.
\end{definition}
In other words, there is no nonzero LND of the algebra of regular functions $\mathbb{K}[X]$. Rigid varieties are not incurious since, for example, the group of automorphisms of a rigid variety contains a unique maximal torus. This fact allows describing all automorphisms of rigid varieties, as was done in~\cite{AG}. 

Define a monomial $T_{i}^{l_{i}} := T_{i1}^{l_{i1}} \ldots T_{in_i}^{l_{in_i}}$, where $n_0 \geq 0, n_1, n_2 \geq 1,$ and $l_{ij}$ are positive integers. Moreover, if $n_0 = 0$, then the corresponding monomial is considered to be equal to one. Denote $n := n_0 + n_1 +n_2.$

\begin{definition}
	A {\it trinomial hypersurface} is an affine variety given by an equation of the following form in $n$-dimensional affine space: 
	
	\begin{equation*}
		T_{0}^{l_{0}} + T_{1}^{l_{1}} + T_{2}^{l_{2}} = 0,
	\end{equation*}
	where $T_{ij}$ are variables.
\end{definition}

In the case $n_0 = 0$, a trinomial hypersurface is called \textit{a trinomial hypersurface of type I}. Otherwise, it is called \textit{a trinomial hypersurface of type II}. 

\begin{theorem}\label{rigid}\cite[Theorem 2]{G}. 
	A trinomial hypersurface $$X = \mathbb{V}(T_{0}^{l_{0}} + T_{1}^{l_{1}} + T_{2}^{l_{2}})$$ is not rigid if and only if one of the following conditions holds:
	
	1) there exist $i \in \{0,1,2\}$ and $a \in \{1,2, \dots, n_i\}$ such that $l_{ia} = 1,$
	
	2) $n_0 \neq 0$ and there exist $i \neq j \in \{0,1,2\}$ and $a \in \{1,2,\dots,n_i\}, b \in \{1,2,\dots,n_j\}$ such that $l_{ia} = l_{jb} = 2$ and for all $u \in \{1,2,\dots,n_i\}, v \in \{1,2,\dots, n_j\}$ the numbers $l_{iu}$ and $l_{jv}$ are even.
\end{theorem}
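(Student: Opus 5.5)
The statement is an equivalence, so I would prove the two directions separately. Sufficiency is by writing down explicit locally nilpotent derivations. Necessity is by reducing to homogeneous LNDs for the natural torus action and then running a divisibility argument. Throughout, $B := \KK[X] = \KK[T_{ij}]/(f)$ with $f = T_0^{l_0}+T_1^{l_1}+T_2^{l_2}$.

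\emph{Sufficiency, case 1).} Suppose $l_{ia}=1$. Pick $j\neq i$ and any index $b$. Define $\delta$ by
\[
\delta(T_{jb}) = \frac{\partial T_i^{l_i}}{\partial T_{ia}} = \frac{T_i^{l_i}}{T_{ia}}, \qquad
\delta(T_{ia}) = -\frac{\partial T_j^{l_j}}{\partial T_{jb}},
\]
and let $\delta$ kill all other variables. Then $\delta(f)=0$, so $\delta$ descends to $B$. It is locally nilpotent for the following reason. Since $l_{ia}=1$, $\delta(T_{jb})$ does not involve $T_{ia}$ or $T_{jb}$, so it lies in $\Ker\delta$. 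Also $\delta(T_{ia})$ is a polynomial in $T_{jb}$ of degree $l_{jb}-1$ with kernel coefficients, so $\delta^{l_{jb}+1}(T_{ia})=0$.

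\emph{Sufficiency, case 2).} Here all $l_{iu}$ and all $l_{jv}$ are even, so we can write $T_i^{l_i}=u^2$ and $T_j^{l_j}=w^2$ with $u=T_{ia}P$ and $w=T_{jb}Q$, where $P,Q$ are monomials. The equation becomes
\[
xy = -T_k^{l_k}, \qquad x=u+\sqrt{-1}\,w,\quad y=u-\sqrt{-1}\,w,
\]
where $k$ is the third index. The assumption $n_0\neq 0$ guarantees that $T_k^{l_k}$ is a genuine monomial. I would look for $\delta$ with $\delta(x)=0$, $\delta(T_{kc})=xR$ for one fixed $c$, and all other variables of $T_k$ killed. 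The relation then forces
\[
\delta(y) = -l_{kc}\,\frac{T_k^{l_k}}{T_{kc}}\,R .
\]
From $\delta(x)=0$ and this value of $\delta(y)$ we solve for $\delta(T_{ia})$ and $\delta(T_{jb})$. This requires dividing by $P$ and $Q$ respectively, so I would choose $R=PQ$. The derivation is triangular: $x$ and the variables of $P,Q$ are in the kernel, $T_{kc}$ is mapped into the kernel, and $T_{ia},T_{jb}$ are mapped to polynomials in $T_{kc}$. Hence it is locally nilpotent.

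\emph{Necessity.} Assume neither 1) nor 2) holds and suppose $\delta\neq 0$ is an LND of $B$. Grade $B$ by $K=\ZZ^n/\langle l_0\cdot e_0 - l_1\cdot e_1,\ l_1\cdot e_1-l_2\cdot e_2\rangle$; this corresponds to an effective action of a torus of dimension $n-2$. Decomposing $\delta$ into homogeneous components, the components at the vertices of the Newton polytope are LNDs (the $\ZZ^n$-version of the result of~\cite{R} quoted in Section 2). So we may assume $\delta$ is $K$-homogeneous. Its kernel $A$ is factorially closed and has transcendence degree $n-2$ by Proposition~\ref{slice}. The first key claim is that every $T_{ij}$ with $\delta(T_{ij})\neq 0$ has its image divisible by the other monomial factors. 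This follows by differentiating the relation: $\sum_i \delta(T_i^{l_i})=0$, and when all $l_{ia}\ge 2$ each $\delta(T_i^{l_i})$ is divisible by $T_{ia}^{l_{ia}-1}$. Combining this divisibility with homogeneity and a degree comparison (through a local slice in the form $hg^l=f^rp$, as in the proof of Theorem~\ref{toric}), I aim to show that at most one monomial can contain variables moved by $\delta$ with odd exponent. I then localize at the kernel variables to reduce to a surface or threefold of the form $x^{p}+y^{q}=\text{unit}\cdot z^{r}$ and finish by the classical analysis of such equations.

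I expect this last step to be the main obstacle. One must show that, with all exponents at least $2$ and the parity condition of 2) failing, the reduced equation admits no nonzero LND. The place where the exponent $2$ with all-even partners, together with $n_0\neq 0$, enters as the unique exception needs careful bookkeeping of the degrees of $\delta(T_{ij})$ in $K$. My first attempt there would be a Mason–Stothers (abc) type argument over the kernel field $\mathrm{Quot}(A)$.
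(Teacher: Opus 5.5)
Your sufficiency direction is correct. In both cases the derivation kills $f$ and is nonzero on $B$. It is also locally nilpotent: in case 2 this holds because $\delta(x)=0$ forces $\delta^2(T_{kc})=0$, after which $T_{ia},T_{jb}$ are killed by a power of $\delta$. The case 1) derivations are exactly the type used in Section~\ref{razdel9}. The paper gives no proof of this theorem; it quotes \cite[Theorem 2]{G}. The substance lies in necessity, and there your proposal has a genuine gap.

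\textbf{The gap in necessity.} The necessity part is only a plan, and its decisive step cannot work as designed. There are two smaller problems first.
\begin{itemize}
\item The vertex argument gives homogeneity only for the torus $\TT$, the free part of $K$. $K$ can have torsion, and homogeneous components of an LND for a finite grading need not be locally nilpotent. So you may assume $\TT$-homogeneity, not $K$-homogeneity.
\item Your reduction presupposes structure you do not derive. You need, for instance, that each monomial $T_i^{l_i}$ contains at most one variable outside $\Ker\delta$ (cf.\ Lemma~\ref{odnlemma}, which the paper imports from \cite{G1}). This does not follow from differentiating $\sum_i T_i^{l_i}$, so your ``key claim'' and the ``odd exponent'' claim remain unproved.
\end{itemize}

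The main problem is the Mason--Stothers step over $L=\mathrm{Quot}(A)$. It only uses that the moved variables become polynomials in a local slice over $L$ satisfying $\alpha X^{p}+\beta Y^{q}+\gamma Z^{r}=0$. Such polynomial solutions exist in exactly the cases you must exclude.
\begin{itemize}
\item Unmoved variables become scalars of $L$, so the parity of their exponents is invisible. Take $T_{01}^2T_{02}^3+T_{11}^2+T_{21}^3$, which is rigid by the theorem, and put $c=T_{02}^3\in L$. Then $X=3s^2-c$, $Y=s^3-3cs$, $Z=-(s^2+c)$ satisfy $cX^2+Y^2+Z^3=0$.
\item For platonic exponents there are coprime nonconstant solutions over $\KK$ itself: Klein's icosahedral identity gives $X,Y,Z\in\KK[s]$ with $X^2+Y^3+Z^5=0$. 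Yet $\mathbb{V}(T_{01}^2+T_{11}^3+T_{21}^5)$ is rigid by the very statement under proof.
\item Non-coprime solutions exist for any exponents, e.g.\ $X=Y=s^3$, $Z=\lambda s^4$ with $\lambda^3=-2$ for $X^4+Y^4+Z^3=0$. So coprimality would also have to be proved.
\end{itemize}

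What is missing is an argument that uses the integral structure $B\subseteq A_g[r]$ with $B_g=A_g[r]$, not only the generic fibre over $\mathrm{Quot}(A)$. It should combine this with $\TT$-homogeneity and the factorial grading of trinomial algebras (cf.\ Proposition~\ref{hf}). Without such an argument the ``only if'' direction is not established.
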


Trinomial hypersurfaces are the simplest particular case of trinomial varieties, which are given by systems of equations of the following form: 

$$c_0 T_{0}^{l_{0}} + c_1T_{1}^{l_{1}} + c_2T_{2}^{l_{2}} = 0,$$ 
where $T_{ij}$ are variables, $c_i \in \mathbb{K} \setminus \{0\}$ satisfy certain conditions. An exact definition of trinomial varieties can be found, for example, in~\cite[Construction 1.1]{HW}.

Denote $\mathfrak{g} = T_{0}^{l_{0}} + T_{1}^{l_{1}} + T_{2}^{l_{2}},$ and let $X$ be a trinomial hypersurface, given by the equation $\mathfrak{g} = 0.$ It can be checked that the polynomial $\mathfrak{g}$ is irreducible, hence the algebra $R(\mathfrak{g}) := \KK[T_{ij}]/(\mathfrak{g})$ of regular functions on the hypersurface $X$ has no zero divisors. We call such algebras $R(\mathfrak{g})$ {\it trinomial}.

Following~\cite[Construction 1.1]{HW}, consider the finest grading on the trinomial algebra $R(\mathfrak{g})$ such that all generators $T_{ij}$ are homogeneous. This grading corresponds to an effective action of a quasitorus $\HH$ on the trinomial hypersurface $X$.

Consider a $2 \times n$-matrix L corresponding to the trinomial algebra $\mathfrak{g}$ as follows:
\[ L =
\begin{pmatrix}
	-l_{0} & l_{1} & 0\\
	-l_{0} & 0 & l_{2}\\
\end{pmatrix}.
\]
Let $L^{T}$ be the transpose of L. Denote by $K$ the factor group $K = \ZZ^n / \mathrm{Im}(L^T)$ and by $Q:\ZZ^n \rightarrow K$ the canonical projection. Let $e_{ij} \in \ZZ^n, i = 0,1,2, j = 1,\ldots n_i$, be the canonical basis vectors. Then define a $K$-grading on the algebra $\KK[T_{ij}]$ as follows:
\begin{equation}\label{deg}
	\mathrm{deg}\;T_{ij} = Q(e_{ij}).
\end{equation}

Note that the sums $\mu := l_{i1}Q(e_{i1})+\ldots +l_{in_i}Q(e_{in_i}) \in K$ are equal for $i = 0,1,2$ and $\mathfrak{g}$ is a homogeneous polynomial of degree $\mu$. Hence, the equalities~(\ref{deg}) define a $K$-grading on $R(\mathfrak{g}),$ corresponding to an effective action of a quasitorus $\HH = \mathrm{Hom}_{\ZZ}(K, \KK^{\times})$ on $R(\mathfrak{g})$. This action has a natural extension to an action on the trinomial hypersurface $X$.

From now on, we will consider hypersurfaces $\mathrm{X}$ of the type I only, that is, hypersurfaces given by equations of the form:

\begin{equation*}
	1 + T_{1}^{l_{1}} + T_{2}^{l_{2}} = 0. 
\end{equation*}

Let $\partial$ be a homogeneous LND with respect to the quasitorus action. The LND is defined on $\mathbb{K}[X]$. Then the following lemma follows directly from~\cite[Lemma~3.4]{G1}:

\begin{lemma}\label{odnlemma}
	There is an index $i \in \{1,\dots,n_1\}$, such that for any $j \neq i$ we have $\partial(T_{1j}) = 0$.
\end{lemma}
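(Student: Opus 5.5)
The plan is to reduce the statement to the structure of homogeneous LNDs on trinomial algebras, following \cite[Lemma~3.4]{G1}. Since the lemma is stated as a direct consequence of that result, the main task is to verify that our setting, a type I hypersurface $1 + T_1^{l_1} + T_2^{l_2} = 0$ with a quasitorus-homogeneous LND $\partial$, fits its hypotheses. I would still sketch the underlying argument, so that the reduction is transparent.

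First, use the factorial closedness of $A=\Ker(\partial)$. We have $T_1^{l_1} = -1 - T_2^{l_2}$ and $T_1^{l_1}+T_2^{l_2} = -1$. Next, note that every homogeneous element in the kernel of a homogeneous LND generates a $\partial$-stable situation. The key step is then the following: if two distinct variables $T_{1i}, T_{1j}$ both satisfied $\partial(T_{1i})\neq0$ and $\partial(T_{1j})\neq 0$, we would look at the homogeneous components and derive a contradiction.

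The standard route in \cite{G1} is to compare degrees. Since $\partial$ is homogeneous of some degree $\deg\partial \in K$, we have $\partial(T_{1j})$ homogeneous of degree $\deg T_{1j}+\deg\partial$. Apply $\partial$ to the relation:
\begin{equation*}
\partial(T_1^{l_1}) + \partial(T_2^{l_2}) = 0 .
\end{equation*}
Here $\partial(T_1^{l_1})=\sum_j l_{1j}T_1^{l_1}T_{1j}^{-1}\partial(T_{1j})$. One then argues that the monomial-type terms coming from distinct $j$ cannot cancel against each other or against the $T_2$-part. The reason is that $R(\mathfrak g)$ has the explicit monomial basis modulo the relation, and $T_1^{l_1}/T_{1j}$ for different $j$ give linearly independent contributions.

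The main obstacle is this non-cancellation argument: showing that $\partial(T_{1j})\neq 0$ for two indices forces incompatible degree equations in $K$, or forces a non-LND. If it does not go through directly, I would fall back to the rigidity-type argument from \cite{G} and \cite{G1}. That argument restricts to the torus part of $\HH$, uses Lemma~\ref{razl}-style extremal components, and shows that the $K$-degree of $\partial$ must equal $-\deg T_{1i}$ plus a degree in the kernel for a unique $i$. This uniqueness comes from the relations in $K=\ZZ^n/\mathrm{Im}(L^T)$ and gives the claim.
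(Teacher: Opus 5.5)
\textbf{The reduction matches the paper, but the sketched argument has a genuine gap.} Citing \cite[Lemma~3.4]{G1} is exactly what the paper does; it gives nothing beyond that reference. You say the main task is to check that our setting fits its hypotheses, but you never do this. The underlying argument you sketch would not work.

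\textbf{Why the non-cancellation step fails.} In type~I we have $\deg T_1^{l_1}=\deg T_2^{l_2}=0$ in $K$. So every summand $l_{1j}T_1^{l_1}T_{1j}^{-1}\partial(T_{1j})$ lies in the single graded piece $R_{\deg\partial}$. Moreover, every nonzero graded piece of $R(\mathfrak g)$ has the form $T^{\nu}\KK[s]$ with $s=T_1^{l_1}$. This is because two monomials of equal $K$-degree differ by an element of $\ZZ l_1\oplus\ZZ l_2$, and $T_2^{l_2}=-1-s$. Hence the contributions from different $j$ are not linearly independent in any useful sense.

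The $T_1$-part also \emph{must} cancel against the $T_2$-part. If $\partial(T_1^{l_1})=0$, factorial closedness of $\Ker\partial$ puts every $T_{1j}$ into the kernel. Then $T_2^{l_2}=-1-T_1^{l_1}$ puts every $T_{2j}$ there too, so $\partial=0$.

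More fundamentally, your sketch never uses local nilpotency. Consider the degree-$0$ homogeneous derivation from $\Lie(\TT)$ given by $T_{11}\mapsto l_{12}T_{11}$, $T_{12}\mapsto -l_{11}T_{12}$, all other variables $\mapsto 0$. It moves two variables of $T_1^{l_1}$, so no pure degree or cancellation argument can rule out that situation.

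\textbf{Why the fallback fails.} Take $\mathbb{V}(1+T_{11}T_{12}+T_{21}T_{22})\cong\mathrm{SL}_2$ and the homogeneous LND with $\partial(T_{11})=T_{22}$, $\partial(T_{21})=-T_{12}$, $\partial(T_{12})=\partial(T_{22})=0$. It satisfies $\deg\partial=-\deg T_{11}+\deg T_{22}=-\deg T_{12}+\deg(T_{12}^2T_{22})$, with both $T_{22}$ and $T_{12}^2T_{22}$ in the kernel. So relations in $K$ do not single out $i$. Also, Lemma~\ref{razl} plays no role when $\partial$ is already homogeneous.

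\textbf{The missing idea.} Let factorial closedness of $A=\Ker\partial$ do the work, combined with the description of graded pieces above.
\begin{enumerate}
\item Since $\partial$ is $K$-homogeneous, $A$ is spanned by homogeneous elements. Each of them equals $c\,T^{\nu}\prod_k(s-c_k)$ with $c\neq0$ and $c_k\notin\{0,-1\}$; the factors $s$ and $s+1=-T_2^{l_2}$ are absorbed into the monomial.
\item If some $s-c_k$ lay in $A$, then $s\in A$, and $\partial=0$ as above.
\item Hence $A$ is generated by the set $S$ of variables it contains.
\item For the same reason, $S$ misses at least one variable from each of the two blocks.
\item On the other hand, $|S|\geq\mathrm{tr.deg}\,A=n-2$ by Proposition~\ref{slice}.
\end{enumerate}
Therefore $S$ misses exactly one $T_{1i}$ (and exactly one $T_{2j}$), which is the lemma.
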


Let $A$ be a finitely generated abelian group and let $R$ be an arbitrary $A$-graded algebra. Denote the multiplicative semigroup of invertible elements of $R$ by $R^{\times}$. And denote the multiplicative semigroup of homogeneous elements of $R$ by $R^+$.

\begin{definition}
	A non-zero element $a \in R^+ \setminus  R^{\times}$ is called {\it homogeneous irreducible} if the condition $a = bc,$ $b, c \in R^+,$ implies that either $b$ or $c$ is invertible.
\end{definition}

\begin{definition}
	An $A$-graded algebra $R$ is said to be {\it factorially graded} if any its non-zero non-invertable homogeneous element may be expressed as a product of homogeneous irreducible elements, and such expression is unique up to association and renumbering.
\end{definition}

\begin{proposition}\label{hf}\cite[Proposition~2.6]{HW}
	An algebra of regular functions on a trinomial hypersurface of the type I is factorially graded.
\end{proposition}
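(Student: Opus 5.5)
The plan is a graded version of Nagata's criterion: localize at the product of all generators $T_{ij}$, check that the localized algebra is factorially graded, and then pull factoriality back to $R(\mathfrak{g})$. For type~I we write $R := R(\mathfrak{g}) = \KK[T_{ij}]/(1 + T_1^{l_1} + T_2^{l_2})$ with the $K$-grading defined by~(\ref{deg}). Since $n_0 = 0$, the matrix $L$ has rows $(l_1, 0)$ and $(0, l_2)$. Consequently $\deg T_1^{l_1} = \deg T_2^{l_2} = 0$ in $K$, and the degree-zero component of the grading encodes the trinomial relation.

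The first step is a graded Nagata lemma. Let $R$ be a $K$-graded integral domain and $S$ the multiplicative set generated by finitely many homogeneous elements $p_1, \ldots, p_r$. Assume each $p_i$ is \emph{$K$-prime}: whenever a product of homogeneous elements lies in $(p_i)$, one of the factors lies in $(p_i)$. Assume also that $R_S$ is factorially graded. Then $R$ is factorially graded. The argument copies the classical one. A homogeneous $a \in R$ factors in $R_S$, and after clearing powers of the $p_i$ we may take the factors homogeneous in $R$ and not divisible by any $p_i$. Such a factor that is irreducible in $R_S$ is irreducible in $R$. Uniqueness follows because the $p_i$ are $K$-prime, so each one can be cancelled from both sides. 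The existence of factorizations needs a finiteness argument. Each homogeneous non-unit strictly increases in a suitable degree or codimension, so factoring cannot go on forever; alternatively one uses noetherianity of $R$ to exclude infinite strictly ascending chains of homogeneous principal ideals.

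The second step is the localization $R_S$ with $S$ generated by all $T_{ij}$. In $R_S$ every monomial is invertible. The homogeneous elements of a given degree $w$ are $m\cdot R_{S,0}$, where $m$ is any monomial of degree $w$; every element of $K$ is the degree of some Laurent monomial. So homogeneous factorization in $R_S$ reduces to ordinary factorization in the degree-zero ring $R_{S,0}$. The degree-zero Laurent monomials are those in the kernel of $Q$, which is the image of $L^T$, generated by $T_1^{l_1}$ and $T_2^{l_2}$; here we use that the relevant lattice is saturated enough, which may require passing to $\gcd$'s of the $l_{ij}$ and should be checked. Setting $s := T_1^{l_1}$, so that $T_2^{l_2} = -1 - s$, we get that $R_{S,0}$ is a localization of $\KK[s, s^{-1}]$, namely $\KK[s, s^{-1}, (1+s)^{-1}]$. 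This ring is a principal ideal domain, hence a UFD, and therefore $R_S$ is factorially graded.

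The third step, which I expect to be the main obstacle, is to verify that every $T_{ij}$ is $K$-prime in $R$. Consider $R/(T_{1j}) \cong \KK[T_{ik} : (i,k) \neq (1,j)]/(1 + T_2^{l_2})$ with the induced grading. As an ordinary ring it is not a domain, because $1 + T_2^{l_2}$ splits into factors. We have to show that it has no homogeneous zero divisors. The idea is that the $K$-grading on the variables $T_{2k}$ has torsion degrees, which permute the components of $\mathbb{V}(1 + T_2^{l_2})$ transitively under the action of the quasitorus $\HH$. Hence a homogeneous function vanishing on one component vanishes on all of them. I would first try to prove this directly, by showing that $\HH$ acts transitively on the components of $\{T_2^{l_2} = -1\}$ using the explicit description of $K$. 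Then I would deduce the $K$-primality of $(T_{1j})$, and by symmetry that of $(T_{2j})$. Combining the three steps proves the proposition.
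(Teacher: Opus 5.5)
The paper gives no proof of its own here; it simply quotes the result from Hausen--Wrobel. Your argument is a correct, self-contained proof for the type~I hypersurface. It follows the standard strategy for such statements in the work of Hausen and coauthors: a graded Nagata criterion, factoriality of the degree-zero part $\KK[s^{\pm 1},(1+s)^{-1}]$ of the localization at all $T_{ij}$, and $K$-primality of the variables.

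A few of your hedges can be removed, and one small ingredient should be stated explicitly.

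In Step~2 there is no saturation issue. By the definition of $K$, the degree-zero Laurent monomials form exactly $\ker Q=\mathrm{Im}(L^T)=\ZZ(l_1,0)\oplus\ZZ(0,l_2)$, whatever the gcds of the $l_{ij}$ are.

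In Step~3, put $d=\gcd(l_{21},\ldots,l_{2n_2})$. Then $1+T_2^{l_2}=\prod_{\zeta^d=-1}(T_2^{l_2/d}-\zeta)$ with pairwise distinct irreducible factors. This has three consequences:
\begin{enumerate}
\item[1)] $R/(T_{1j})$ is reduced. Your component argument needs this, since otherwise a homogeneous nilpotent would be a homogeneous zero divisor.
\item[2)] Its irreducible components are $\{T_{1j}=0,\ T_2^{l_2/d}=\zeta\}$.
\item[3)] Elements of $\HH$ with $t_1=(1,\ldots,1)$ and $t_2^{l_2/d}=\omega$ exist for every $d$-th root of unity $\omega$, because $l_2/d$ is primitive. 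Hence $\HH$ permutes the components transitively.
\end{enumerate}
If $d=1$, the quotient is already a domain, so your remark that it is ``not a domain'' should read ``need not be a domain''.

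In Step~1, stripping the $p_i$-powers off the factors uses the fact that no nonzero element of the noetherian domain $R$ is divisible by all powers of $p_i$ (ACCP or Krull's intersection theorem).
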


\section{Commuting homogeneous LNDs on trinomial hypersurfaces}\label{razdel9}

In this section, we describe certain trinomial hypersurfaces of the type I, such that there are homogeneous with respect to the torus action maximal LNDs on these hypersurfaces.

\textbf{Case 1:} there is a unique variable in the monomial $T_{1}^{l_{1}}$.\\
Without loss of generality, considering the criterion for an arbitrary trinomial hypersurface to be rigid (Theorem~\ref{rigid}), it can be assumed that the hypersurface is given by the equation of the following form:

\begin{equation}\label{vyr2}
	x_1 \ldots x_k y_1^{a_1} \ldots y_m^{a_m} = z^n + 1, 
\end{equation}
where $k \geq 1, m \geq 0, n \geq 1$ and $a_i > 1$ for all $i \in \{1,\ldots, m\}.$ If $n = 1$, this equation gives an affine space, thus we will assume $n  > 1$.

It follows from Lemma~\ref{odnlemma} and conditions for the derivation to be locally nilpotent that every irreducible homogeneous LND for the considered hypersurface is given by the equations listed below, up to proportionality. A more accurate description of homogeneous LNDs on trinomial varieties can be found in~\cite{Ras}. For every $ 1 \leqslant i \leqslant k$:
$$ \partial_i(x_i) = nz^{n-1}, $$
$$\partial_i(z) =  y_1^{a_1} \ldots y_m^{a_m}\prod_{j \neq i}x_j,$$
$$\partial_i(x_j) = \partial_i(y_s) = 0, j \neq i, 1 \leqslant s \leqslant m.$$

It is easy to see that $[\partial_i, \partial_j] \neq 0,\ i \neq j,$ for example, since $\partial_i \partial_j(x_i) = 0$, but $\partial_j \partial_i(x_i) = \nonumber \\ = \partial_j(nz^{n-1}) \neq 0.$

Moreover, replicas $\partial_i$ commute neither with $\partial_j$, nor with replicas of $\partial_j$ if $j \neq i$:
$$h\partial_i \partial_j(x_i) = 0,$$ 
\begin{equation}\label{vyr}
	\partial_j h\partial_i(x_i) = \partial_j(hnz^{n-1}) = nz^{n-1}\partial_j(h) + n(n-1)hz^{n-2}y_1^{a_1} \ldots y_m^{a_m}\prod_{s \neq j}x_s,  
\end{equation}
where $h$ is a homogeneous element of the kernel of $\partial_i$. Hence, considering Proposition~\ref{slice}, which gives a relation for the transcendence degree of the kernel of a derivation, we conclude that $h$ is a homogeneous polynomial that does not contain $z, x_i$. Therefore, the expression (\ref{vyr}) is non-zero. 

In fact, if this expression is equal to zero, then $z\partial_j(h)$ is divisible by $h$, since $\mathbb{K}[X]$ is factorially graded by Proposition~\ref{hf}. However, $h$ does not depend on $z$, therefore, $\partial_j(h)$ is divisible by $h$. So, it follows that $\partial_j(h) = 0$ and $$n(n-1)hz^{n-2}y_1^{a_1} \ldots y_m^{a_m} \prod_{s \neq j}x_s = 0,$$ which is not true.

Thus, the expression (\ref{vyr}) is equal to zero for any admissible $h$.

$$h\partial_i g\partial_j(x_i) = 0,$$
\begin{equation}\label{vyr1}
	g\partial_j h\partial_i(x_i) = g\partial_j(hnz^{n-1}) = gnz^{n-1}\partial_j(h) + n(n-1)hgz^{n-2} y_1^{a_1} \ldots y_m^{a_m}\prod_{s \neq j}x_s,   
\end{equation}
where $g$ is a homogeneous element of the kernel of $\partial_j$, that is, $g$ is a homogeneous polynomial that does not contain $z, x_j$. Therefore, the expression (\ref{vyr1}) is non-zero for any admissible $h$, it can be checked in the same manner as it was done above.

So, the following lemma is true, according to the Proposition~\ref{odnor}:
\begin{lemma}\label{perm1}
	Every homogeneous LND on a trinomial hypersurface of the form (\ref{vyr2}) is maximal.
\end{lemma}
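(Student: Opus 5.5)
By Proposition~\ref{odnor}, it suffices to show the following. Let $\delta$ be a homogeneous LND on the hypersurface~(\ref{vyr2}). Then every homogeneous LND $D$ commuting with $\delta$ is equivalent to $\delta$. The grading here is the torus part of the $K$-grading; passing to the torus is harmless because homogeneity with respect to the quasitorus refines it.

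First I reduce to the list $\partial_1,\dots,\partial_k$. Every homogeneous LND is a replica $h\partial_i$ of one of these irreducible derivations, where $h$ is a homogeneous element of $\Ker(\partial_i)$. The reason is that a homogeneous LND factors as a homogeneous element of the kernel times an irreducible homogeneous LND; this is where factoriality of the grading, Proposition~\ref{hf}, enters. So write $\delta=h\partial_i$ and $D=g\partial_j$. If $i=j$, the two derivations have the same kernel and are therefore equivalent, so there is nothing to prove.

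It remains to rule out $i\neq j$. For this I evaluate the commutator on $x_i$. On one side, $g\partial_j(x_i)=0$, so $h\partial_i\, g\partial_j(x_i)=0$. On the other side, $g\partial_j\, h\partial_i(x_i)$ is expression~(\ref{vyr1}), namely
\[
g n z^{n-1}\partial_j(h) + n(n-1) g h z^{n-2} y_1^{a_1}\cdots y_m^{a_m}\prod_{s\ne j}x_s .
\]
I need to show this is nonzero.

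The key input is the shape of the kernel. By Proposition~\ref{slice}, $\Ker(\partial_i)$ has transcendence degree one less than $\mathbb{K}[X]$. It contains the $n-1$ algebraically independent elements $x_s$ ($s\neq i$) and $y_t$. Hence it equals the polynomial algebra in these variables, and in particular $h$ involves neither $z$ nor $x_i$.

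Now suppose expression~(\ref{vyr1}) vanishes. Cancel $gz^{n-2}$ and use that $\mathbb{K}[X]$ is factorially graded. Then $h$ divides $z\,\partial_j(h)$. Since $h$ does not involve $z$, this gives $h \mid \partial_j(h)$. Both sides are homogeneous, and $\deg\partial_j(h)=\deg h+\deg\partial_j$ with $\deg\partial_j\neq 0$, so a divisibility $\partial_j(h)=ch$ with $c$ homogeneous is impossible for an LND unless $\partial_j(h)=0$. (Apply $\partial_j$ repeatedly: $c$ would have to lie in the kernel and then $\partial_j^N(h)=c^N h\neq 0$.) Hence $\partial_j(h)=0$. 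But then the remaining term $n(n-1)ghz^{n-2}y_1^{a_1}\cdots y_m^{a_m}\prod_{s\neq j}x_s$ is nonzero, because $n>1$ and $\mathbb{K}[X]$ is a domain. This is a contradiction, so $[\delta,D]\neq 0$.

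The main obstacle is the step $h\mid z\partial_j(h)\Rightarrow h\mid\partial_j(h)$. It uses that $z$ and the irreducible factors of $h$ are non-associate homogeneous irreducibles in the factorially graded algebra. That holds because $h$ is a monomial-type polynomial in $x_s,y_t$; still, one must check that none of these variables becomes associate to $z$ in $\mathbb{K}[X]$, which I would verify from the $K$-degrees.
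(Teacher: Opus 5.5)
Your proposal follows the paper's proof essentially step for step: you reduce via Proposition~\ref{odnor} to replicas $h\partial_i$ and $g\partial_j$ with $i\neq j$, evaluate both compositions on $x_i$ to obtain expression~(\ref{vyr1}), use the transcendence-degree description of $\Ker(\partial_i)$ together with the factorial grading to force $\partial_j(h)=0$, and conclude from the surviving nonzero term. There are two small slips. First, the kernel has $k+m-1$ generators, not $n-1$ (in (\ref{vyr2}) the letter $n$ is the exponent of $z$). Second, your torus remark runs the wrong way: applying Proposition~\ref{odnor} to the $\TT$-grading requires handling all $\TT$-homogeneous LNDs $D$, whereas the list $\partial_1,\dots,\partial_k$ classifies only the $\HH$-homogeneous ones, so quasitorus homogeneity implying torus homogeneity covers $\delta$ but not $D$ --- a point the paper also leaves implicit.
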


\textbf{Case 2:} there are not less than two variables in the monomial $T_{1}^{l_{1}}$.\\
Without loss of generality, considering the criterion for an arbitrary trinomial hypersurface to be rigid (Theorem~\ref{rigid}), it can be assumed that the hypersurface is given by the equation of the following form:
\begin{equation}\label{vyr3}
	x_1 \ldots x_k y_1^{a_1} \ldots y_m^{a_m} = z_1^{l_1} \ldots z_n^{l_n} + 1, 
\end{equation}
where $k \geq 1, m \geq 0, n > 1$ and $a_i > 1$ for all $i \in \{1,\ldots, m\}.$ 
We will deal with the case $l_j > 1$ for all $j \in \{1,\ldots, n\}.$

It follows from Lemma~\ref{odnlemma} and conditions for the derivation to be locally nilpotent that every irreducible homogeneous LND on the considered hypersurface is given by the equations listed below, up to proportionality. A more accurate description of homogeneous LNDs on trinomial varieties can be found in~\cite{Ras}. For every $ 1 \leqslant i \leqslant k,  1 \leqslant j \leqslant n$:
$$ \partial_{ij}(x_i) = l_jz_j^{l_j-1}\prod_{s \neq j}z_s^{l_s}, $$
$$\partial_{ij}(z_j) =  y_1^{a_1} \ldots y_m^{a_m}\prod_{s \neq i}x_s,$$
$$\partial_{ij}(x_r) = \partial_{ij}(y_s) = \partial_{ij}(z_p) = 0, r \neq i, 1 \leqslant s \leqslant m, p \neq  j.$$
It is easy to see that $[\partial_{ij}, \partial_{ir}] = 0$, which is enough to check for $x_i, z_j.$

Considering cases $k = 1$ and $k>1$, we calculate commutators in the same manner as in the case 1 above. We conclude that replicas of the form $h$$\partial_{ij}$ are maximal LNDs, where $h$ is a homogeneous element of the kernel of $\partial_{ij}$ and $h$ contains all $z_s, s \neq j$. That is, $h$ is a homogeneous polynomial that does not contain $z_j, x_i$ and contains all $z_s, s \neq j$. Moreover, if LND $h$$\partial_{ij},$ where $h$ is a homogeneous element of $\mathrm{Ker}(\partial_{ij}) = \KK[x_1,\ldots,\hat{x_i}, \ldots, x_k, y_1, \ldots, y_m, z_1, \ldots, \hat{z_j}, \ldots, z_n]$, is maximal, then the following holds:
\begin{equation*}
	[h\partial_{ij}, \partial_{ir}] = h[\partial_{ij}, \partial_{ir}] - \partial_{ir}(h) \partial_{ij} = -\partial_{ir} (h) \partial_{ij} \neq 0
\end{equation*}
for all $r \neq j.$ Hence, $h$ contains all $z_s, s \neq j$.

Therefore, using Proposition~\ref{odnor}, we obtain the following lemma:
\begin{lemma}\label{perm2}
	There are no maximal irreducible homogeneous LNDs on trinomial hypersurfaces of the form (\ref{vyr3}). However, their replicas of the form $h\partial_{ij}$, where $h$ is a homogeneous polynomial that does not contain $z_j, x_i$ and contains all $z_s, s \neq j,$ are maximal and only they are  maximal.
\end{lemma}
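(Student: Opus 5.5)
The plan is to combine Proposition~\ref{odnor} with explicit commutator computations, as in Case~1, and to reduce everything to homogeneous LNDs. By Lemma~\ref{odnlemma} and the list above, every homogeneous LND is a replica $h\partial_{ij}$ with $h$ a homogeneous element of
\[
\Ker(\partial_{ij})=\KK[x_1,\ldots,\hat{x_i},\ldots,x_k,y_1,\ldots,y_m,z_1,\ldots,\hat{z_j},\ldots,z_n].
\]
Because the algebra is factorially graded (Proposition~\ref{hf}), such an $h$ is, up to a scalar, a monomial in these variables. Two LNDs $h\partial_{ij}$ and $g\partial_{rs}$ are equivalent exactly when $(i,j)=(r,s)$, since their kernels are then different coordinate subalgebras. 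So by Proposition~\ref{odnor}, $h\partial_{ij}$ is maximal if and only if it commutes with no $g\partial_{rs}$ with $(r,s)\neq(i,j)$. The proof therefore splits into showing that some partner commutes in the bad cases, and that no partner commutes in the good case.

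\emph{Necessity.} First, $\partial_{ij}$ and $\partial_{ir}$ commute for $r\neq j$; checking this on $x_i$, $z_j$, $z_r$ is a short computation. Hence no irreducible $\partial_{ij}$ is maximal, which gives the first claim. For a replica we have
\[
[h\partial_{ij},\partial_{ir}] = -\partial_{ir}(h)\,\partial_{ij}.
\]
Since $\partial_{ir}$ only moves $x_i$ and $z_r$, and $h$ is free of $x_i$, this commutator vanishes precisely when $z_r$ does not occur in $h$. So maximality forces $h$ to contain every $z_s$ with $s\neq j$.

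\emph{Sufficiency.} Let $h$ be a monomial free of $z_j$ and $x_i$ that contains all $z_s$ with $s\neq j$. We must show $[h\partial_{ij},g\partial_{rs}]\neq0$ whenever $(r,s)\neq(i,j)$, for every homogeneous $g\in\Ker(\partial_{rs})$. There are three cases.
\begin{enumerate}
\item[(a)] $r=i$, $s\neq j$. Here $g$ is free of $z_s$ and $x_i$. Evaluate both compositions on $x_i$ and use that $h$ contains $z_s$, so $\partial_{is}(h)\neq0$.
\item[(b)] $r\neq i$, $s=j$. Evaluate on $x_i$ as in Case~1. The term $g\,\partial_{rj}\bigl(h\,\partial_{ij}(x_i)\bigr)$ contains $\partial_{rj}(z_j^{l_j-1})$, which is nonzero. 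One then argues by factorial gradedness that it cannot be cancelled, exactly as for the expression~(\ref{vyr1}).
\item[(c)] $r\neq i$, $s\neq j$. Compare the coefficients at $\partial/\partial x_i$ or $\partial/\partial z_j$. Compute $g\partial_{rs}$ of $h\,l_j z_j^{l_j-1}\prod_{p\neq j}z_p^{l_p}$: the factor $\prod_{p\neq j} z_p^{l_p}$ contains $z_s$, so it is moved, while $h\partial_{ij}(g\,\partial_{rs}(x_i))=0$ because $\partial_{rs}(x_i)=0$.
\end{enumerate}

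The main obstacle is showing, in cases (b) and (c), that the resulting sum of nonzero terms does not cancel, for all $k$ including $k=1$. I would handle this the way the paper handles~(\ref{vyr}). Split the expression into a part with $\partial_{rs}(h)$ and a part where the derivative hits the explicit factor. Suppose the sum vanished. Divisibility in the factorially graded algebra, together with the fact that $h$ and $g$ are free of the relevant variables, would then force $\partial_{rs}(h)$ to be divisible by $h$. That makes $\partial_{rs}(h)=0$, leaving a single nonzero monomial, which is a contradiction. Once no homogeneous LND inequivalent to $h\partial_{ij}$ commutes with it, Proposition~\ref{odnor} gives maximality.
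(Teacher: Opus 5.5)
Your overall route is the paper's: reduce via Proposition~\ref{odnor} to homogeneous replicas $g\partial_{rs}$, dispose of the irreducible $\partial_{ij}$ with $[\partial_{ij},\partial_{ir}]=0$, and obtain necessity from $[h\partial_{ij},\partial_{ir}]=-\partial_{ir}(h)\partial_{ij}$. That part is fine. The sufficiency step in case (a), however, fails as stated. Your ``main obstacle'' discussion does not cover it, because here the cancellation really happens.

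Put $P_i:=\partial_{ij}(z_j)=y_1^{a_1}\cdots y_m^{a_m}\prod_{q\neq i}x_q$. Let $\eta\geq 1$ be the exponent of $z_s$ in $h$, and $\gamma\geq 0$ the exponent of $z_j$ in $g$. Since $[\partial_{ij},\partial_{is}]=0$, one has $[h\partial_{ij},g\partial_{is}]=h\partial_{ij}(g)\,\partial_{is}-g\,\partial_{is}(h)\,\partial_{ij}$, hence
\[
[h\partial_{ij},g\partial_{is}](x_i)=(\gamma l_s-\eta l_j)\,hg\,P_i\,z_j^{l_j-1}z_s^{l_s-1}\prod_{p\neq j,s}z_p^{l_p},
\]
which vanishes whenever $\gamma l_s=\eta l_j$. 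For example, on the paper's hypersurface $xy^2=z_1^2z_2^3+1$, both $z_2^3\partial_1\bigl(z_1^2\partial_2(x)\bigr)$ and $z_1^2\partial_2\bigl(z_2^3\partial_1(x)\bigr)$ equal $12z_1^3z_2^5y^2$. So $\partial_{is}(h)\neq 0$ does not by itself give a nonzero value at $x_i$. You must test on $z_j$ instead: $h\partial_{ij}\bigl(g\partial_{is}(z_j)\bigr)=0$, whereas $g\,\partial_{is}(hP_i)=\eta\,g\,(h/z_s)\,P_i^2\neq 0$ because $\partial_{is}(P_i)=0$. This is exactly the test the paper uses in its example (evaluation at $z_1$).

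In cases (b) and (c) the conclusion is correct, but your non-cancellation sketch does not carry over to (c). There $h$ contains $z_s$, so vanishing only yields $z_s\,\partial_{rs}(h)=-l_s\,hP_r$, and $h\mid\partial_{rs}(h)$ does not follow from this. A uniform argument for (b) and (c) is factorial closedness of $\Ker(\partial_{rs})$. Since $\partial_{rs}(x_i)=0$, the commutator at $x_i$ equals $-g\,\partial_{rs}\bigl(h\,\partial_{ij}(x_i)\bigr)$. If this were zero, then $\partial_{ij}(x_i)=l_jz_j^{l_j-1}\prod_{p\neq j}z_p^{l_p}$ would lie in $\Ker(\partial_{rs})$. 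That is false, since this monomial contains $z_j$ with exponent $l_j-1\geq 1$ in case (b), and $z_s$ with exponent $l_s$ in case (c).

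A minor point as well: homogeneous elements of $\Ker(\partial_{ij})$ are scalar multiples of monomials, but not because $B$ is factorially graded; for instance, $z_1^{l_1}\cdots z_n^{l_n}+2$ is homogeneous of degree $0$. The real reason is that the degrees of the $k+m+n-2$ kernel generators are linearly independent in $K\otimes_{\ZZ}\QQ$. The only relations, $\deg(x_1\cdots x_k y_1^{a_1}\cdots y_m^{a_m})=0$ and $\deg(z_1^{l_1}\cdots z_n^{l_n})=0$, involve $x_i$ and $z_j$ respectively.
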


\begin{example}
	Consider a trinomial hypersurface of the form (\ref{vyr3}):
	
	$$xy^2 = z_1^2z_2^3 + 1.$$
	
	Irreducible homogeneous LNDs on this hypersurface are:
	
	$$\left\{
	\begin{gathered}
		\partial_1(x) = 2z_1z_2^3,\\
		\partial_1(z_1) = y^2,\\
		\partial_1(y) = \partial_1(z_2) = 0,\\
	\end{gathered}     
	\right.
	\qquad
	\left\{
	\begin{gathered}
		\partial_2(x) = 3z_1^2z_2^2,\\
		\partial_2(z_2) = y^2,\\
		\partial_2(y) = \partial_2(z_1) = 0.\\
	\end{gathered}
	\right.$$
	
	These LNDs commute, and they are not equivalent. So for every irreducible homogeneous LND on the considering hypersurface, there is an LND that is not equivalent to the first one but commutes with it. However, if we consider, for example, a replica $z_2\partial_1$, then we see that neither $\partial_2$ commutes with $z_2\partial_1$ nor do replicas of $\partial_2$, since for every homogeneous $g \in \Ker(\partial_2)$ we have: 
	$$z_2\partial_1g\partial_2(z_1) = 0,$$
	$$g\partial_2z_2\partial_1(z_1) = g\partial_2(z_2y^2) = gy^4 \neq 0.$$
	Thus, every homogeneous LND that commutes with $z_2\partial_1$ is equivalent to it.
	
\end{example}

\section{Isotropy subgroups of homogeneous LNDs on trinomial hypersurfaces}

This section describes isotropy subgroups of homogeneous LNDs on trinomial hypersurfaces of the form (\ref{vyr2}) and (\ref{vyr3}), which were considered in Section~\ref{razdel9}. 

Let $B:=\mathbb{K}[\mathrm{X}]$ denote the algebra of regular functions on a variety $X$, as per the above. There is an action of the automorphism group $\Aut(B)$ of the algebra $B$ on the set of all LNDs of $B$ by conjugation. Denote the stabilizer (or isotropy subgroup) of an LND $\delta$ under this action by $\Aut(B)_\delta := \{ \phi \in \Aut(B) \,|\, \phi \delta = \delta \phi \}$ as in Section 4. Let $\HH\subseteq \Aut(B)$ be the quasitorus that was introduced in Section~\ref{s8}.

Introduce several subgroups of the group $\Aut(B)_\delta$. Denote the intersection of $\HH$ and $\Aut(B)_\delta$ by $\HH_\delta$. The subgroup $\HH_\delta$ is a quasitorus. Let $\TT$ denote a connected component of $\HH$ and $\widehat{T}$ denote a connected component of $\HH_\delta$.

Consider an action of the symmetric group $\mathrm{S}_{k+m-1}$ on an affine space with coordinates $x_2,\ldots,x_k,y_1,\ldots, y_m$ by permutation of coordinates. Let $S_\delta$ denote the stabilizer of the monomial $h:= x_2 \ldots x_k y_1^{a_1} \ldots y_m^{a_m}$ under this action. It is clear that $S_\delta$ is a direct product of the symmetric groups that permute variables with the same degrees in the monomial. We are going to describe the group $\Aut(B)_\delta$.

\begin{remark}
	In Theorems~\ref{trinomial} and~\ref{trinomial1} we will consider $k \neq 1$. In case $k = 1$, a trinomial hypersurface of the form (\ref{vyr2}) is a special case of Danielewski variety. The isotropy groups $\Aut(B)_\delta$ for Danielewski varieties were calculated in~\cite{DL}. 
\end{remark}

\begin{theorem}\label{trinomial}
	Let $\delta$ be a homogeneous irreducible LND on a hypersurface of the form~(\ref{vyr2}). Then
	\begin{equation*}
		\Aut(B)_\delta = (S_\delta \ltimes \HH_\delta) \ltimes \mathcal{U}(\delta).
	\end{equation*}
\end{theorem}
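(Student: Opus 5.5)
Without loss of generality $\delta=\partial_1$. Then $A=\Ker(\delta)=\KK[x_2,\ldots,x_k,y_1,\ldots,y_m]$ is a polynomial algebra: $x_1$ and $z$ are the only generators moved by $\partial_1$, and $\mathrm{tr.deg.}A=n_{\mathrm{vars}}-2$. The local slice is $x_1$, with $\delta(x_1)=nz^{n-1}$. The plan mirrors the proof of Theorem~\ref{toric}.

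\textbf{Step 1 (restriction to $A$).} By Lemma~\ref{perm1}, $\delta$ is maximal. Let $\TT$ be the torus of $\HH$; its connected subgroup $\widehat T$ stabilizing $\delta$ is a torus in $\Aut(B)_\delta$. I first check that $\widehat T$ is a maximal torus of $\Aut(B)_\delta$. Any torus commuting with $\widehat T$ acts on $A$ diagonally in the same monomial basis, and one compares dimensions: $\widehat T$ acts on $\mathrm{Spec}(A)\cong\mathbb{A}^{k+m-1}$ with an open orbit, as in Lemma~\ref{torlem}.

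Proposition~\ref{stab} then shows that every $\phi\in\Aut(B)_\delta$ maps $\widehat T$-semi-invariants of $A$ to semi-invariants. The irreducible semi-invariants of the polynomial ring $A$ are, up to scalars, the variables $x_2,\ldots,x_k,y_1,\ldots,y_m$. Hence $\phi$ permutes them up to scalars, giving a homomorphism $\Phi$ to $S_{k+m-1}$.

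\textbf{Step 2 (image of $\Phi$ lies in $S_\delta$).} The element $h=x_2\cdots x_ky_1^{a_1}\cdots y_m^{a_m}$ generates the ideal $\delta(B)\cap A$, or more precisely the ideal $(x_1)\cap A$-type plinth data. Here $\delta(z)=h$ and $\delta(x_1)=nz^{n-1}$. I would identify the plinth ideal $A\cap\delta(B)$, which $\phi$ must preserve. I expect it to be generated by $h$, since the image of $\delta$ in $A$ is generated by $\delta(z)=h$: any $\delta$-preimage of an element of $A$ is linear in the local slice $z$ modulo $A$. So $\phi(h)=\lambda h$, and since $A$ is a UFD the permutation must preserve the exponents. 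Hence $\Phi(\phi)\in S_\delta$.

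Conversely, every $\tau\in S_\delta$ lifts to the automorphism $\pi_\tau$ that permutes the variables by $\tau$ and fixes $x_1$ and $z$. It preserves the equation and commutes with $\delta$, because $\delta(z)=h$ is fixed. This gives a splitting $S_\delta\to\Aut(B)_\delta$.

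\textbf{Step 3 (scalars).} After composing with $\pi_\tau^{-1}$, $\phi$ acts on $A$ by $x_i\mapsto\lambda_ix_i$ and $y_s\mapsto\mu_sy_s$. I must show that this diagonal map agrees on $A$ with an element of $\HH_\delta$. Elements of $\HH$ scale $x_1$ and $z$ by some $\alpha,\beta$ with $\beta^n=1$ and $\alpha\prod\lambda_i\prod\mu_s^{a_s}=1$. Commutation with $\delta$ then requires $\beta^{-1}\prod\lambda_i\prod\mu_s^{a_s}\cdot\ldots=1$, which should be solvable: the $\HH$-action on $A$ is the full diagonal torus on these variables, and the conditions for $\delta$ fix $\alpha,\beta$.

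The \emph{main obstacle} is justifying that the restricted scalars are exactly achievable. One must show that the composite $\phi'$ acting trivially on $A$ exists inside $\Aut(B)_\delta$, and that the finitely many compatibility constraints (roots of unity in $\beta$) are satisfied by elements of $\HH_\delta$ rather than only $\widehat T$. I would handle this by writing $\phi(z)=uz+v$ in $B_h=A_h[z]$, then using $\delta\phi(z)=\phi(h)$ to pin down $u$. The equation $x_1h=z^n+1$ then forces $u^n=1$ and $v$ to be compatible, which produces the required element of $\HH_\delta$.

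\textbf{Step 4 (assembly).} The composite acts trivially on $A$, so by Proposition~\ref{Ker} it lies in $\mathcal U(\delta)$. The intersections are trivial: $\mathcal U(\delta)$ acts trivially on $A$ and is unipotent, and $S_\delta\cap\HH_\delta$ is trivial by the permutation action. $\mathcal U(\delta)$ is normal, being $\Ker\Theta$, and $S_\delta$ normalizes $\HH_\delta$. Together these give $\Aut(B)_\delta=(S_\delta\ltimes\HH_\delta)\ltimes\mathcal U(\delta)$.
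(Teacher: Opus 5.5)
Your architecture is the paper's: maximality from Lemma~\ref{perm1}, Proposition~\ref{stab} for $\widehat T$, the plinth ideal $\delta(B)\cap A=hA$ to force exponent-preserving permutations, the evident splitting of $S_\delta$, and the computation $\phi(z)=uz+v$, $u=\phi(h)/h$, against $x_1=(z^n+1)/h$ to get $u^n=1$. Finishing with Proposition~\ref{Ker} instead of writing $\exp(-g\delta)$ explicitly is a harmless simplification. There is also a minor slip: $x_1$ is not a local slice for $n>2$, since $\delta^2(x_1)=n(n-1)z^{n-2}h\neq0$. The local slice is $z$, which is what you actually use later.

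The genuine problem is Step~1. $\widehat T$ does \emph{not} act on $\mathrm{Spec}(A)\cong\mathbb{A}^{k+m-1}$ with an open orbit. The connected component $\TT$ of $\HH$ has dimension $k+m-1=\dim X-1$; the action has complexity one, unlike the toric setting of Lemma~\ref{torlem}. Commuting with $\delta$ then imposes $t_2\cdots t_ks_1^{a_1}\cdots s_m^{a_m}=1$, so $\dim\widehat T=k+m-2$. This breaks both conclusions you draw in Step~1:
\begin{itemize}
\item Your dimension count does not show that $\widehat T$ is maximal, and maximality is a hypothesis of Proposition~\ref{stab}.
\item The irreducible $\widehat T$-semi-invariants are not just the variables. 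We have $A^{\widehat T}=\KK[h]$, so for $c\neq0$ the element $h-c$ is an irreducible invariant that is not a variable.
\end{itemize}
In fact an open orbit would mean $\widehat T$ realizes every diagonal scaling of $A$, which your own Step~3 rules out.

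Both points are repaired by running your Step~3 computation first, for an arbitrary $\phi\in\Aut(B)_\delta$:
\begin{itemize}
\item The plinth ideal gives $\phi(h)=ch$ with $c\in\KK^\times$, hence $\phi(z)=cz+v$ with $v\in A$.
\item Since $B$ is free over $A[x_1]$ with basis $1,z,\dots,z^{n-1}$, the condition $\phi(x_1)=c^{n-1}x_1+\bigl((cz+v)^n-c^nz^n+1-c^n\bigr)/(ch)\in B$ forces $h$ to divide, in $A$, the coefficient $nc^{n-1}v$ of $z^{n-1}$ and the coefficient $1-c^n+v^n$ of $z^0$.
\item Hence $h\mid v$ and $c^n=1$, so no torus in $\Aut(B)_\delta$ moves $h$.
\end{itemize}
Now let $T'\supseteq\widehat T$ be a torus in $\Aut(B)_\delta$.
\begin{itemize}
\item It restricts injectively to $A$, because the kernel lies in the unipotent group $\mathcal U(\delta)$.
\item It preserves the $\widehat T$-weight spaces $x_i\KK[h]$ and $y_s\KK[h]$. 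Since it preserves irreducibility, it scales each variable.
\item It fixes $h$, so $T'=\widehat T$.
\end{itemize}
Likewise $\phi$ and $\phi^{-1}$ preserve $\KK[h]$, so no variable is sent to a multiple of $h-c$. Only after this does Proposition~\ref{stab} give the permutation of the variables up to scalars. The paper dismisses maximality of $\widehat T$ as ``easy to see'', but the justification you supplied is false and has to be replaced by an argument of this kind.
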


\begin{proof}
	Every irreducible homogeneous LND for the considered hypersurface is given by the equations listed below, up to proportionality and renumbering of variables:
	$$ \delta(x_1) = nz^{n-1}, $$
	$$\delta(z) =  x_2 \ldots x_ky_1^{a_1} \ldots y_m^{a_m},$$
	$$\delta(x_j) = \delta(y_s) = 0, \text{when  } j \neq 1, 1 \leqslant s \leqslant m.$$
	
	Denote $A := \mathrm{Ker}(\delta)$. Let us prove that $A = \mathbb{K}[x_2, \ldots, x_k, y_1, \ldots, y_m]$. It is clear that $x_2, \ldots, x_k, y_1, \ldots, y_m \in A$. On the other hand, according to Proposition~\ref{slice}, $\mathrm{tr.deg.}A = \mathrm{tr.deg.}B - 1 = m + k - 1$. Hence, we see that the algebra $\mathbb{K}[x_2, \ldots, x_k, y_1, \ldots, y_m]$ coincides with $A$, since the first one is algebraically closed in $\mathbb{K}[B]$.
	
	Let us find the conditions under which an element $t \in \mathbb{T}$ commutes with $\delta$, where $t = (t_2, \ldots, t_k, s_1, \ldots, s_m)$:
	\begin{multline*}
		t \cdot (x_1,x_2, \ldots, x_k, y_1, \ldots, y_m, z) =\\ = (t_2^{-1} \ldots t_k^{-1}s_1^{-a_1} \ldots s_m^{-a_m}x_1, t_2x_2, \ldots, t_kx_k, s_1y_1, \ldots, s_my_m, z),
	\end{multline*}
	
	$$t \circ \delta(z) = t_2 \ldots t_ks_1^{a_1} \ldots s_m^{a_m} x_2 \ldots x_ky_1^{a_1} \ldots y_m^{a_m} = \delta \circ t(z) = x_2 \ldots x_ky_1^{a_1} \ldots y_m^{a_m},$$
	
	$$t \circ \delta(x_1) = nz^{n-1} = \delta \circ t(x_1) = t_2^{-1} \ldots t_k^{-1}s_1^{-a_1} \ldots s_m^{-a_m} n z^{n-1}.$$
	Therefore, the commute condition is the following: $t_2^{-1} \ldots t_k^{-1}s_1^{-a_1} \ldots s_m^{-a_m} = 1.$

	Consider $\phi \in \Aut(B)_\delta$. The functions $x_2, \ldots, x_k, y_1, \ldots, y_m$ are semi-invariant under the action of $\widehat{T}$. Moreover, their $\widehat{T}$-weights differ. It is easy to see that $\widehat{T}$ is a maximal torus in $\Aut(B)_\delta$. Every homogeneous irreducible LND on $B=\mathbb{K}[\mathrm{X}]$ for a hypersurface of the form~(\ref{vyr2}) is maximal by Lemma~\ref{perm1}. Hence, we can apply Proposition~\ref{stab}. Therefore, $\varphi$ permutes functions $x_i$, $2\leq i\leq k$, and $y_j$, $1\leq j\leq m$ and multiplies them by nonzero constants. 
	
	Note that the plinth ideal $\mathrm{pl}(\delta):=\delta(B) \cap A$ is principal ideal and is generated by the function $\delta(z)=h$. The ideal $\mathrm{pl}(\delta)$ is invariant under the action of $\Aut(B)_\delta$. Thus, $\varphi$ cannot permute variables that have different degrees in the monomial $h$. We have the following:
	
	$$\left\{
	\begin{gathered}
		\phi(y_1) = \lambda_1 y_{\sigma(1)},\\
		\phi(y_2) = \lambda_2 y_{\sigma(2)},\\
		\ldots\\
		\phi(y_m) = \lambda_m y_{\sigma(m)},\\
		\phi(x_2) = \mu_2 x_{\Delta(2)},\\
		\ldots\\
		\phi(x_k) = \mu_k x_{\Delta(k)},\\
	\end{gathered}
	\right.$$
	where $\lambda_i, \mu_j \in \mathbb{K}^{\times}, \sigma \in S_m, \Delta \in S_{k-1}$ are some permutations such that the equality $\sigma(i) = j$ holds if and only if $a_i= a_j.$
	
	It is easy to see that $S_\delta\subseteq \Aut(B)_\delta$. Consider an automorphism $\xi\in S_\delta$ that permutes $x_i$ by $\Delta$ and permutes $y_j$ by $\sigma$. Then $\varphi=\xi\circ\psi$, where $\psi(x_i)=\lambda_i x_i$ and $\psi(y_j)=\mu_j y_j$ for $2\leq i\leq k$ and $1\leq j\leq m$. 
	
	So, considering the composition of $\psi$ and an appropriate element $t \in \HH_\delta$, we obtain the automorphism $\zeta$, such that $\zeta(y_j)=y_j$ for $1\leq j\leq m$ and $\zeta(x_i)=x_i$ for $2\leq i\leq k-1$. Moreover, $\zeta(x_k) = \alpha x_{k}$. 
	Hence,
	$$\delta\zeta(z) = \zeta\delta(z) = \zeta(x_2 \ldots x_ky_1^{a_1} \ldots y_m^{a_m}) = \alpha x_2 \ldots x_k y_1^{a_1} \ldots y_m^{a_m}.$$
	
	It follows that:
	$$\zeta(z) = \alpha z + f, f \in \mathrm{Ker}(\delta).$$
	
	We have: $x_1=\frac{z^n+1}{h}$. Apply the automorphism $\zeta$ to each side of this equality:
	
	\begin{multline*}\zeta(x_1) = \zeta\left(\frac{z^n + 1}{h}\right) = \frac{(\alpha z + f)^n +1}{\alpha h} = \frac{\alpha^n z^n + 1 + (n\alpha^{n-1}z^{n-1}f + \ldots +f^n)}{\alpha h}=\\
		=\alpha^{n-1}x_1+\frac{ 1-\alpha^n + n\alpha^{n-1}z^{n-1}f + \ldots +f^n}{\alpha h}
		\in \mathbb{K}[\mathrm{X}].\end{multline*}

	Therefore, $f$ is divisible by $h$ in $A$ and $\alpha^n = 1$. Suppose $f = hg, g \in A.$ Then $\zeta(z) = \alpha z + hg$. Hence, applying $\exp(-g\delta) \in \mathcal{U}(\delta)$ to $\zeta(z)$, we obtain:  
	$$\mathrm{exp}(-g\delta) \circ \zeta(z) = \alpha z.$$
	$$\exp(-g\delta) \circ \zeta(x_1) = \frac{(\alpha z)^n +1}{\alpha h}=\frac{z^n +1}{\alpha h} = \frac{x_1}{\alpha}.$$
	
	So, the automorphism $\rho=\exp(-g\delta) \circ \zeta$ acts trivially on $x_2,\ldots, x_{k-1}$, $y_1,\ldots, y_m$, multiplies $z$ and $x_k$ by $\alpha\in\KK^\times$ and divides $x_1$ by $\alpha$. It is easy to see that $\rho\in \HH_\delta$.

	Thus, $\Aut(B)_\delta$ is generated by the subgroups $\mathcal{U}(\delta)$, $S_\delta$ and $\HH_\delta$. 
	
\end{proof}

\begin{example}
	Consider the following trinomial hypersurface of the form (\ref{vyr2}):
	$$x_1x_2y_1^2y_2^2y_3^7 = z^3 + 1.$$
	The LND
	$$\delta(x_1) = 3z^2,$$
	$$\delta(z) = x_2y_1^2y_2^7y_3^2,$$
	$$\delta(x_2) = \delta(y_1) = \delta(y_2) = \delta(y_3) = 0$$
	is maximal by Lemma~\ref{perm1}.
	Variables $x_1$ and $x_2$, $y_1$ and $y_2$ have the same degrees among variables of the monomial $h = x_1x_2y_1^2y_2^7y_3^2$. So in this case $S_\delta = S_2 \times S_2 \simeq \ZZ_2 \times \ZZ_2.$
	Consider the quasitorus $\HH = \TT \times K$, where its connected component $\TT$ is four-dimensional torus, $K$ is a finite group. The quasitorus $\HH$ acts on this variety. The torus $\TT$ acts on $x_1, x_2, y_1, y_2, y_3$ by the following rule: 
	\begin{equation*}
		\left( \begin{array}{c}
			t_1 \\
			t_2 \\
			t_3 \\
			t_4
		\end{array} \right) 
		\cdot 
		\left( \begin{array}{c}
			x_1 \\
			x_2 \\
			y_1 \\
			y_2 \\
			y_3 
		\end{array} \right)
		= \left( \begin{array}{c}
			t_1^{-1}t_2^{-2}t_3^{-2}t_4^{-7}x_1 \\
			t_1x_2\\
			t_2y_1 \\
			t_3y_2\\
			t_4y_3
		\end{array} \right)
	\end{equation*}
	and acts trivially on $z$. $K$ acts on $z$ by multiplication by cube roots of unity and acts trivially on $x_1, x_2, y_1, y_2, y_3$.
	
	Then $\hat{T}$ = $\{(t_2^{-2}t_3^{-2}t_4^{-7}, t_2, t_3, t_4)\} \subset \TT$ is a three-dimensional torus, the connected component of~$\HH_\delta.$
	
	Hence the group $\Aut(B)_\delta$ = $((\ZZ_2 \times \ZZ_2) \ltimes \HH_\delta) \ltimes \mathcal{U}(\delta)$ by Theorem~\ref{trinomial}.
\end{example}

Now consider the case of a hypersurface of the form (\ref{vyr3}).

Let $h\partial_{ij}$ be a homogeneous LND, where $h$ is a homogeneous polynomial that does not contain $z_j, x_i$ and contains all $z_s, s \neq j$. It was proved in Section 9 that for such a variety LND is maximal if and only if LND is of the form $h\partial_{ij}$. Hence, Proposition~\ref{stab} is true for these LNDs.

We may consider the following homogeneous LND without loss of generality (see Section 9):
$$ \delta(x_1) = hl_1z_1^{l_1-1}z_2^{l_2}\ldots z_n^{l_n}, $$
$$\delta(z_1) =  hy_1^{a_1} \ldots y_m^{a_m}x_2 \ldots x_k,$$
$$\delta(x_r) = \delta(y_s) = \delta(z_p) = 0, r \neq 1, 1 \leqslant s \leqslant m, p \neq  1,$$
where $h$ is a homogeneous polynomial that does not contain $z_1, x_1$ and contains all $z_s$, $s \neq 1$.

Consider an action of the symmetric group $\mathrm{S}_{k+m+n-2}$ on an affine space with coordinates $x_2,\ldots,x_k,y_1,\ldots, y_m, z_2,\ldots, z_n$ by permutation of coordinates. Let $S_\delta$ denote the intersection of the stabilizers of polynomials $h, h_2:=z_2^{l_2}\ldots z_n^{l_n}$ and $h_1:= x_2 \ldots x_k y_1^{a_1} \ldots y_m^{a_m}$ under this action. It is clear that $S_\delta$ is a direct product of the symmetric groups that permute variables $x_i$ among themselves, permute variables $y_j$ with the same degrees among themselves, permute variables $z_p$ with the same degrees among themselves in the monomials $h_1$ and $h_2$ and preserve the polynomial~$h$. We will describe $\Aut(B)_\delta$.

\begin{theorem}\label{trinomial1}
	Let $\delta$ be the described above LND on a hypersurface of the form (\ref{vyr3}). Then
	\begin{equation*}
		\Aut(B)_\delta = (S_\delta \ltimes \HH_\delta) \ltimes \mathcal{U}(\delta).
	\end{equation*}
\end{theorem}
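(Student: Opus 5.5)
We follow the scheme of the proof of Theorem~\ref{trinomial}. First we identify the kernel: $A:=\Ker(\delta)$ contains $x_2,\ldots,x_k,y_1,\ldots,y_m,z_2,\ldots,z_n$, and the polynomial algebra they generate is algebraically closed in $B$. By Proposition~\ref{slice} it has the right transcendence degree, so
\[
A=\KK[x_2,\ldots,x_k,y_1,\ldots,y_m,z_2,\ldots,z_n].
\]
Next we compute $\HH_\delta$ explicitly by checking on $x_1$ and $z_1$ when an element of $\HH$ commutes with $\delta$. The condition is a single character equation involving the weights of $h$, $h_1$ and $h_2$. We then check that the connected component $\widehat T$ is a maximal torus of $\Aut(B)_\delta$ and that the generators of $A$ are $\widehat T$-semi-invariants with pairwise distinct weights.

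Since $\delta$ is maximal by Lemma~\ref{perm2}, Proposition~\ref{stab} applies. Take $\phi\in\Aut(B)_\delta$. Then $\phi$ sends homogeneous elements of $A$ to homogeneous ones. Because $A$ is a polynomial ring, its homogeneous prime elements are the coordinates up to scalars, so $\phi$ permutes the coordinates $x_2,\ldots,x_k,y_j,z_p$ ($p\ge 2$) up to nonzero constants. To cut this permutation down to $S_\delta$ we use invariants of $\Aut(B)_\delta$.

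First, the plinth ideal $\mathrm{pl}(\delta)=\delta(B)\cap A$ is preserved by $\phi$. We would show it is principal, generated by $hh_1$ (the image $\delta(z_1)$). Then $\phi(hh_1)$ is a scalar multiple of $hh_1$.

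Second, we need more structure to separate $h$, $h_1$ and $h_2$. The natural second invariant is the ideal of $A$ generated by the coefficients of $\delta$ restricted to slices. Concretely, $\delta^2(x_1)=h\,l_1(l_1-1)z_1^{l_1-2}h_2\,\delta(z_1)$, so one can look at the ideal $\delta^{l_1}(B)\cap A$. Alternatively, use the relation $x_1h_1=z_1^{l_1}h_2+1$ modulo $A$: $\phi$ must map it to a relation of the same shape. This forces $\phi$ to preserve the monomial $h_1$ and the monomial $h_2$ up to scalars, hence to preserve $h$ up to a scalar. It follows that the permutation lies in $S_\delta$, and composing with the corresponding element $\xi\in S_\delta$ (which lies in $\Aut(B)_\delta$ since it fixes $h,h_1,h_2$) reduces to a $\psi$ acting diagonally on the generators of $A$.

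Then we compose $\psi$ with an element of $\HH_\delta$ to normalize all but one scalar, obtaining $\zeta$ with $\zeta(\delta(z_1))=\alpha\,\delta(z_1)$. Hence $\zeta(z_1)=\alpha z_1+f$ with $f\in A$, exactly as in Theorem~\ref{trinomial}. We then substitute into
\[
x_1=\frac{z_1^{l_1}h_2+1}{h_1}.
\]
Requiring $\zeta(x_1)\in B$ forces $h_1\mid f$ in $A$ and the scalar conditions that make the diagonal part a quasitorus element; here $l_j>1$ and the factoriality of Proposition~\ref{hf} are used. Writing $f=h_1 g'$, we still need $f$ divisible by $hh_1$ in order to remove it by $\exp(-g\delta)$ with $g\in A$, since $\delta(z_1)=hh_1$.

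This divisibility is the step I expect to be the main obstacle. I would attack it via Proposition~\ref{Ker}: instead of divisibility, observe that $\zeta\circ\pi^{-1}$, where $\pi$ is the diagonal quasitorus part, is identity on $A$, so it lies in $\mathcal{U}(\delta)$ directly. What remains to check is that the diagonal map $z_1\mapsto\alpha z_1$, $x_1\mapsto\alpha^{l_1-1}x_1$ (with the other scalars) is an automorphism of $B$ commuting with $\delta$, i.e.\ lies in $\HH_\delta$. Finally, the trivial intersections and the normality of $\mathcal{U}(\delta)$ (it is the kernel of $\Theta$) give the semidirect product structure.
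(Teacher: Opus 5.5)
Your proposal is sound and follows the paper's outline: the kernel, the character condition defining $\HH_\delta$, Proposition~\ref{stab}, the plinth ideal $hh_1A$, the relation $x_1h_1=z_1^{l_1}h_2+1$ to force $\phi$ into $S_\delta$, normalization by $\HH_\delta$, and $\zeta(z_1)=\beta z_1+f$ with $f\in A$.

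\textbf{The last step.} This is where you genuinely diverge. The obstacle you anticipate does not arise in the paper. From $h_1\mid f$, $f=h_1g$, the paper removes $f$ with an exponential of $g\,\delta/h$. This lies in $\mathcal U(\delta)$ because $\delta/h=\partial_{11}$ is an LND with kernel $A$, so $g\,\delta/h\sim\delta$, and divisibility by $hh_1$ is never needed. (Strictly, one should use $\exp(-\beta^{-1}g\,\delta/h)$; the paper's display drops the factor $\beta$.)

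Your route is also correct and a bit cleaner: take the diagonal $\pi$ with $\pi|_A=\zeta|_A$ and $\pi(z_1)=\beta z_1$, and conclude $\pi^{-1}\zeta\in\Ker\Theta=\mathcal U(\delta)$ by Proposition~\ref{Ker}. It needs only $\beta^{l_1}=1$, not $h_1\mid f$. As in the paper, $\beta^{l_1}=1$ comes from reducing $\zeta(x_1)\zeta(h_1)=(\beta z_1+f)^{l_1}h_2+1$ modulo a prime factor of $h_1$. The paper's route, in exchange, exhibits the unipotent factor explicitly.

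One correction to your $\pi$: its scalar on $x_1$ must be the inverse of the scalar by which $\zeta$ multiplies $h_1$, i.e.\ $c_h\beta^{l_1-1}$ where $\zeta(h)=c_hh$. Your $\alpha^{l_1-1}$ is right only when $\zeta$ fixes $h$.

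\textbf{A false intermediate claim.} The $\widehat T$-homogeneous primes of $A$ are not just the coordinates, because $\dim\widehat T=\dim A-1$. The monomial $u=(hh_1)^{l_1}h_2$ has $\HH$-weight $l_1\deg\delta$, so it is $\HH_\delta$-invariant. The prime factors of $u-1$ are then $\widehat T$-homogeneous but are not coordinates. The paper passes over the same point.

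Your plinth step repairs this without Proposition~\ref{stab}. Since $h$ is $\HH$-homogeneous in $A$, it is a monomial, and $hh_1$ is divisible by every coordinate of $A$. So $\phi(hh_1)\in\KK^\times hh_1$ and unique factorization in $A$ already force $\phi$ to permute the coordinates up to scalars.

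\textbf{Your second invariant works.} The ideal $\delta^{l_1}(B)\cap A$ gives a more rigorous separation of $h,h_1,h_2$ than the argument via $\phi(x_1)\in B$ (yours or the paper's). One checks it equals $h^{l_1}h_1^{l_1-1}(h_1,h_2)A$, generated by $\delta^{l_1}(z_1^{l_1})$ and $\delta^{l_1}(x_1)$. Its colon by $(hh_1)^{l_1-1}$ is the $\phi$-stable ideal $(hh_1,hh_2)$, whose gcd is $h$. Hence $h$, then $h_1$, and then $h_2$ are preserved by $\phi$ up to scalars.
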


\begin{proof}
	We prove that $\mathrm{Ker}(\delta)$ = $\mathbb{K}[x_2, \ldots, x_k, y_1, \ldots, y_m, z_2, \ldots, z_n].$ We will denote $A :=\mathrm{Ker}(\delta)$. 
	
	Then $x_2, \ldots, x_k, y_1, \ldots, y_m, z_2, \ldots, z_n \in A$. On the other hand, $\mathrm{tr.deg.}A = \mathrm{tr.deg.}B - 1 = m + k + n - 2$ by Proposition~\ref{slice}. It is known that the subalgebra $\mathbb{K}[x_2, \ldots, x_k, y_1, \ldots, y_m, z_2, \ldots, z_n]$ is algebraically closed in $\mathbb{K}[B]$, hence it coincides with $A$.
	
	Let $d_{i}$ be the least common multiple of numbers $l_1$ and $l_i$. Consider an element $t \in \mathbb{T}$, where $t = (t_2, \ldots, t_k, s_1, \ldots, s_m, r_2, \ldots, r_n).$ Let us find the conditions under which $t$ commutes with $\delta$:
	\begin{multline*}
		t \cdot (x_1,x_2, \ldots, x_k, y_1, \ldots, y_m, z_1, \ldots, z_n) =\\ = (t_2^{-1} \ldots t_k^{-1}s_1^{-a_1} \ldots s_m^{-a_m}x_1, t_2x_2, \ldots, t_kx_k, s_1y_1, \ldots, s_my_m, \\ r_2^{-\frac{d_2}{l_1}}\ldots r_n^{-\frac{d_n}{l_1}}z_1, r_2^{\frac{d_2}{l_2}}z_2 \ldots, r_n^{\frac{d_n}{l_n}}z_n),
	\end{multline*}
	
	\begin{multline*}
		t \cdot \delta(z_1) = t(h)t_2 \ldots t_ks_1^{a_1} \ldots s_m^{a_m} x_2 \ldots x_ky_1^{a_1} \ldots y_m^{a_m} = \delta \cdot t(z_1) =\\ = hr_2^{-\frac{d_2}{l_1}}\ldots r_n^{-\frac{d_n}{l_1}}x_2 \ldots x_ky_1^{a_1} \ldots y_m^{a_m},
	\end{multline*}
	
	\begin{multline*}
		t \cdot \delta(x_1) = t(h)r_2^{\frac{d_2}{l_1}}\ldots r_n^{\frac{d_n}{l_1}}l_1z_1^{l_1-1}z_2^{l_2}\ldots z_n^{l_n} = \delta \cdot t(x_1) =\\ = ht_2^{-1} \ldots t_k^{-1}s_1^{-a_1} \ldots s_m^{-a_m} l_1z_1^{l_1-1}z_2^{l_2}\ldots z_n^{l_n}.
	\end{multline*}
	
	Therefore, the commuting condition is the following: $$t(h)r_2^{\frac{d_2}{l_1}}\ldots r_n^{\frac{d_n}{l_1}}t_2 \ldots t_ks_1^{a_1} \ldots s_m^{a_m} = h.$$
	
	Consider $\phi \in \Aut(B)_\delta$. Functions $x_2, \ldots, x_k, y_1, \ldots, y_m, z_2, \ldots, z_n$ are semi-invariant under the action of $\widehat{T}$. Moreover, their $\widehat{T}$-wights differ. It is easy to see that $\widehat{T}$ is a maximal torus in  $\Aut(B)_\delta$. The LND of the considered form on $B=\mathbb{K}[\mathrm{X}]$ for hypersurface of the form (\ref{vyr3}) is maximal by Lemma~\ref{perm2}. Hence, we can apply Proposition~\ref{stab}. Therefore, $\varphi$ permutes functions $x_i$, $2\leq i\leq k$, $y_j$, $1\leq j\leq m$, and $z_p$, $2\leq p\leq n$, and multiplies them by nonzero constants. 
	
	Note that the plinth ideal $\mathrm{pl}(\delta)$ is principal ideal and is generated by the function $\delta(z_1)= hh_1$. The ideal $\mathrm{pl}(\delta)$ is invariant under the action of $\Aut(B)_\delta$.
	
	Then:
	$$\delta\phi(z_1) = \phi\delta(z_1) = \phi(hh_1) = \beta hh_1, \beta \in \mathbb{K}^{\times}.$$
	
	It follows that
	\begin{equation}\label{eq1}
		\phi(z_1) = \frac{\phi(hh_1)}{hh_1}z_1 + f = \beta z_1 + f, f \in \mathrm{Ker}(\delta).
	\end{equation}
	
	We have: $x_1=\frac{z_1^{l_1}\ldots z_n^{l_n}+1}{h_1}$. Apply the automorphism $\phi$ to each side of this equality:
	\begin{multline}\label{eq}\phi(x_1) = \phi\left(\frac{z_1^{l_1}\ldots z_n^{l_n} + 1}{h_1}\right) = \frac{(\beta z_1 + f)^{l_1}\phi (z_2^{l_2})\ldots \phi(z_n^{l_n}) +1}{\phi (h_1)} =\\ = \frac{\beta^{l_1} z_1^{l_1} \phi(z_2^{l_2})\ldots \phi(z_n^{l_n}) + 1 + (l_1\beta^{l_1-1}z_1^{l_1-1}f + \ldots +f^n)\phi(z_2^{l_2})\ldots \phi(z_n^{l_n})}{\phi(x_2 \cdots x_ky_1^{a_1} \cdots y_m^{a_m})} =\\ = \frac{\beta^{l_1} z_1^{l_1} \phi(z_2^{l_2})\ldots \phi(z_n^{l_n}) + x_1 \cdots x_ky_1^{a_1}\cdots y_m^{a_m} - z_1^{l_1}\cdots z_n^{l_n}}{\phi(x_2 \cdots x_ky_1^{a_1} \cdots y_m^{a_m})} + \\ + \frac{(l_1\beta^{l_1-1}z_1^{l_1-1}f + \ldots +f^n)\phi(z_2^{l_2})\ldots \phi(z_n^{l_n})}{\phi(x_2 \cdots x_ky_1^{a_1} \cdots y_m^{a_m})}.\end{multline}
	
	Therefore, since the condition $\phi(x_1) \in \mathbb{K [\mathrm{X}]}$ holds, the monomial $z_1^{l_1}\cdots z_n^{l_n}$ should either be divisible by $\phi(x_2 \cdots x_ky_1^{a_1} \cdots y_m^{a_m})$, or cancel out with the monomial $\beta^{l_1} z_1^{l_1} \phi(z_2^{l_2})\ldots \phi(z_n^{l_n})$. If the first case occurs, then $\phi$ permutes $z_j$ and $x_i$ or $z_j$ and $y_s$ for several $j, i, s$. It is easy to see that this leads to the contradiction with the condition $\phi(x_1) \in \mathbb{K [\mathrm{X}]}.$ The second case occurs if and only if $\phi$ does not permute $z_j$ and $x_i$ and does not permute $z_j$ and $y_s$ for all $i, j, s.$ However, the condition $\phi(x_1) \in \mathbb{K [\mathrm{X}]}$ implies the fact that $\phi$ does not permute $x_i$ and $y_j$ for all $i, j$ either. Moreover, by the same reasoning, it is easy to see that $\phi$ cannot permute variables that can be found in the equation of the hypersurface in different degrees. Hence, the monomials $h_1$ and $h_2$ are invariant under the action of $\phi$. Therefore, the polynomial $h$ is invariant under the action of~$\phi$ too.
	
	We have:
	$$\left\{
	\begin{gathered}
		\phi(y_1) = \lambda_1 y_{\sigma(1)},\\
		\phi(y_2) = \lambda_2 y_{\sigma(2)},\\
		\ldots\\
		\phi(y_m) = \lambda_m y_{\sigma(m)},\\
		\phi(x_2) = \mu_2 x_{\Delta(2)},\\
		\ldots\\
		\phi(x_k) = \mu_k x_{\Delta(k)},\\
		\phi(z_2) = \nu_2 z_{\pi(2)},\\
		\ldots\\
		\phi(z_n) = \nu_n z_{\pi(n)},\\
	\end{gathered}
	\right.$$
	where $\lambda_i, \mu_j, \nu_p \in \mathbb{K}^{\times}, \sigma \in S_m, \Delta \in S_{k-1}, \pi \in S_{n-1}$ are some permutations such that the equality $\sigma(i) = j$ holds if and only if $a_i= a_j$ and the equality $\pi(i) = j$ holds if and only if $l_i = l_j$, and in addition the following is true:
	\begin{multline*}
		h(x_2, \ldots, x_k, y_1, \ldots, y_m, z_2, \ldots, z_n) = \\ = h(x_{\Delta(2)}, \ldots, x_{\Delta(k)}, y_{\sigma(1)}, \ldots, y_{\sigma(m)}, z_{\pi(2)}, \ldots, z_{\pi(n)}). 
	\end{multline*}
	
	It is not difficult to see that $S_\delta\subseteq \Aut(B)_\delta$. Consider an automorphism $\xi\in S_\delta$ that permutes $x_i$ by $\Delta$, permutes $y_j$ by $\sigma$ and permutes $z_p$ by $\pi$. Then $\varphi=\xi\circ\psi$, where $\psi(x_i)=\lambda_i x_i$, $\psi(y_j)=\mu_j y_j$ and $\psi(z_p) = \nu_p z_p$ for $2\leq i\leq k, 1\leq j\leq m, 2\leq p \leq n$. 
	
	So, considering the composition of $\psi$ and an appropriate element $t \in \HH_\delta$, we obtain the automorphism $\zeta$ such that $\zeta(y_j)=y_j$ for $1\leq j\leq m, \zeta(x_i)=x_i$ for $2\leq i\leq k-1$ and $\zeta(z_p)=z_p$ for $2\leq p \leq n$. Moreover, $\zeta(x_k) = \alpha x_{k}$. 
	
	Hence, taking into account (\ref{eq1}) and (\ref{eq}), we have the following:
	\begin{multline*}\zeta(x_1) = \zeta\left(\frac{z_1^{l_1}\ldots z_n^{l_n} + 1}{h_1}\right) = \\ = \frac{\beta^{l_1} z_1^{l_1} z_2^{l_2}\ldots z_n^{l_n} + 1 + (l_1\beta^{l_1-1}z_1^{l_1-1}f + \ldots +f^n)z_2^{l_2}\ldots z_n^{l_n}}{\alpha h_1} = \\ \frac{\beta^{l_1}}{\alpha}x_1 + \frac{1 - \beta^{l_1} +(l_1\beta^{l_1-1}z_1^{l_1-1}f + \ldots +f^n)z_2^{l_2}\ldots z_n^{l_n}}{\alpha h_1}.\end{multline*}
	
	So, $f$ is divisible by $h_1$ in $A$ and $\beta^{l_1}= 1$. Suppose $f = h_1g, g \in A.$ Then $\zeta(z_1) = \beta z_1 + h_1g$. Hence, applying $\exp(-g\frac{\delta}{h}) \in \mathcal{U}(\delta)$ to $\zeta(z_1),$ we obtain:  
	$$\mathrm{exp}(-g\frac{\delta}{h}) \circ \zeta(z_1) = \beta z_1.$$
	$$\exp(-g\delta) \circ \zeta(x_1) = \frac{(\beta z_1)^{l_1}z_2^{l_2}\ldots z_n^{l_n} +1}{\alpha h_1}=\frac{z_1^{l_1}z_2^{l_2}\ldots z_n^{l_n} +1}{\alpha h_1} = \frac{x_1}{\alpha}.$$
	
	Therefore, we conclude that the automorphism $\rho=\exp(-g\frac{\delta}{h}) \circ \zeta$ acts trivially on $x_2,\ldots, x_{k-1}, y_1,\ldots, y_m, z_2, \ldots, z_n$, multiplies $z_1$ by $\beta$ and $x_k$ by $\alpha$, where $\alpha, \beta \in\KK^\times$, and divides $x_1$ by $\alpha$. It is easy to see that $\rho\in \HH_\delta$.
	
	Thus, $\Aut(B)_\delta$ is generated by the subgroups  $\mathcal{U}(\delta)$, $S_\delta$ and $\HH_\delta$. 
	
\end{proof}

\end{document}